\newcommand{\mathd}{\mathrm{d}}
\definecolor{gr}{rgb}   {0.,   0.69,   0.23 }
\definecolor{bl}{rgb}   {0.,   0.5,   1. }
\definecolor{mg}{rgb}   {0.85,  0.,    0.85}
\definecolor{yl}{rgb}   {0.8,  0.7,   0.}
\definecolor{or}{rgb}  {0.7,0.2,0.2}
\newtheorem{theorem}{Theorem} [section]
\newtheorem{lemma}[theorem]{Lemma}
\newtheorem{remark}[theorem]{Remark}
\newtheorem{definition}[theorem]{Definition}
\newtheorem{corollary}[theorem]{Corollary}
\newcommand{\noi}{\noindent}
\newcommand{\R}{\mathbb{R}}
\newcommand{\T}{\mathbb{T}}
\let\Re=\undefined\DeclareMathOperator*{\Re}{Re}
\let\P= \undefined
\newcommand{\P}{\mathbf{P}}
\newcommand{\E}{\mathbb{E}}
\renewcommand{\L}{\mathcal{L}}
\newcommand{\F}{\mathcal{F}}
\newcommand{\al}{\alpha}
\newcommand{\dl}{\delta}
\newcommand{\eps}{\varepsilon}
\newcommand{\ld}{\lambda}
\newcommand{\s}{\sigma}
\newcommand{\ft}{\widehat}
\newcommand{\wt}{\widetilde}
\newcommand{\cj}{\overline}
\newcommand{\dt}{\partial_t}
\DeclareMathSymbol{\wcol}{\mathord}{operators}{"3A}
\newcommand{\wick}[1]{\wcol#1\wcol}
\renewcommand{\l}{\ell}
\renewcommand{\o}{\omega}
\renewcommand{\O}{\Omega}
\newcommand{\les}{\lesssim}
\newcommand{\ges}{\gtrsim}
\newcommand{\jb}[1]
{\langle #1 \rangle}
\newcommand{\ind}{\mathbf 1}
\renewcommand{\S}{\mathcal{S}}
\newcommand{\N}{\mathbb{N}}
\renewcommand{\H}{\mathcal{H}}
\newtheorem*{ackno}{Acknowledgements}
\numberwithin{equation}{section}
\numberwithin{theorem}{section}
\newcommand{\PP}{\mathbb{P}}
\DeclareMathOperator{\Law}{Law}
\newcommand{\W}{\mathcal{W}}
\newcommand{\dr}{\theta}
\newcommand{\Dr}{\Theta}
\begin{document}
\baselineskip = 14pt

\title[Focusing Gibbs measure]
{Focusing Gibbs measures  with harmonic potential}

\author[T.~Robert, K.~Seong, L.~Tolomeo, Y.~Wang]
{Tristan Robert, Kihoon Seong, Leonardo Tolomeo, Yuzhao Wang}

\address{Tristan Robert\\
IECL -- Facult\'e des sciences et Technologies, 54506 Vandoeuvre-l\`es-Nancy, France.  }

\email{tristan.robert@univ-lorraine.fr}

\address{
Kihoon Seong\\
Max Planck Institute 
for Mathematics In The Sciences\\ 
04103 Leipzig, Germany
}

\email{kihoon.seong@mis.mpg.de}

\address{
Leonardo Tolomeo\\ 
School of Mathematics,
The University of Edinburgh,
James Clerk Maxwell Building, Room 5601,
The King's Buildings, Peter Guthrie Tait Road,
Edinburgh, EH9 3FD, United Kingdom}

\email{l.tolomeo@ed.ac.uk}

\address{
Yuzhao Wang\\
School of Mathematics\\
Watson Building\\
University of Birmingham\\
Edgbaston\\
Birmingham\\
B15 2TT\\ United Kingdom}

\email{y.wang.14@bham.ac.uk}

\subjclass[2010]{60H30, 81T08, 35Q55}

\keywords{focusing Gibbs measure; normalizability; variational approach;  nonlinear Schr\"odinger equation with harmonic potential}

\begin{abstract}
In this paper,
we study the Gibbs measures associated to the focusing nonlinear Schr\"odinger equation with harmonic potential on Euclidean spaces.
We establish a dichotomy for normalizability vs non-normalizability in the one dimensional case, and under radial assumption in the higher dimensional cases.
In particular,
we complete the programs of constructing Gibbs measures in the presence of a harmonic potential initiated by Burq-Thomann-Tzvetkov (2005) in dimension one and Deng (2013) in dimension two with radial assumption.
\end{abstract}

\date{\today}
\maketitle

\vspace{-5mm}


\section{Introduction}
\label{SEC:1}

In this paper,
we investigate the issue of constructing the Gibbs measure associated with the nonlinear Schr\"odinger equation (NLS) with harmonic potential on Euclidean spaces. This measure is a probability measure $\rho$ on $\mathcal \S'(\R^d)$, the space of Schwartz distributions on $\R^d$, formally written as
\begin{align}
    d\rho (u) = \mathcal Z^{-1} e^{-H(u)}du.
\label{gibbs}
\end{align}

\noi
Here $\mathcal Z$ is a normalizing constant (the partition function), and $H$ is the Hamiltonian of
\begin{align}\label{HNLS}
i\dt u = (-\Delta +|x|^2)u +\sigma|u|^{p-2}u,~~(t,x)\in\R^{1+d},
\end{align}
where $p>2$ is a real number, and $\sigma\in\{-1,1\}$ encodes the nature of the nonlinearity (focusing when $\sigma=-1$ and defocusing when $\sigma=1$). Namely, $H$ is given by
\begin{align}\label{Hamiltonian}
H(u) = \frac12 \int_{\R^d}|\nabla u|^2dx +\frac12\int_{\R^d}|x|^2|u|^2dx +\frac{\sigma}{p}\int_{\R^d}|u|^pdx.
\end{align}

Motivated by the statistical point of view on the nonlinear Schr\"odinger dynamics, the construction and invariance of Gibbs measures of the type \eqref{gibbs} has received a lot of attention in recent years, and we refer to \cite{BO94,BO96,BTTz,CFL,Deng12,DNY2,DNY3,DNY4,LRS,LMW,LW,TW,Tzv1,Tzv2} and references therein for a further account on the subject. In turn, this statistical point of view on nonlinear Schr\"odinger equations, which dates back to the seminal papers \cite{LRS} by Lebowitz, Rose, and Speer, and \cite{BO94} by Bourgain, was inspired by the developments during the 70's and 80's of the probabilistic approach to Euclidean quantum field theory (EQFT). The latter approach proved indeed very powerful to construct relativistic quantum field theories in the Minkowski space-time $\R^{1+(d-1)}$ satisfying the so-called Wightman axioms; again we point to \cite{BG,BG2,GH,GJ,Simon} for more detailed discussions on the matter. One of the main strategy to define an EQFT on $\R^d$ has been to regularize both small scales (``ultraviolet cut-off'') and large scales (``infrared cut-off'') by considering the Gibbs type measures \eqref{gibbs} with a ``truncated'' Hamiltonian, considering for example either a lattice approximation (small scale truncation in physical space), or a regularization by a convolution with an approximation of the identity (small scale truncation in frequency space), and with $\R^d$ replaced by a compact domain (large scale truncation). Then one first proves convergence (up to adding diverging renormalization constants if necessary) of theses approximate measures when removing the small scale truncation on the Hamiltonian, and then convergence from the finite volume case to the infinite volume measure. In this regard, one of the motivations of studying the construction of the Gibbs measure \eqref{gibbs} associated to the Hamiltonian \eqref{Hamiltonian} is that, thanks to the confining potential $|x|^2$, we can directly construct the infinite volume measure without the need for an infrared cut-off.

The first guess to try and give a meaning to the formal expression \eqref{gibbs} consists in interpreting the measure $e^{-H(u)}du$ as the measure
\begin{align}\label{gibbs2}
d\rho(u) = \mathcal Z^{-1}e^{-\frac{\sigma}p\int_{\R^d}|u|^pdx}d\mu(u),
\end{align}
where $\mu$ is the Gaussian measure formally given by
\begin{align}\label{mu1}
d\mu(u)=\wt {\mathcal Z}^{-1}e^{-\frac12\int_{\R^d}\big(|\nabla u|^2 +|x|^2|u|^2\big)dx}du.
\end{align}
One can then interpret $\mu$ as a Gaussian free field on $\R^d$, with the Green function of the harmonic oscillator in place of the usual Green function of the Laplacian as correlation kernel; see \eqref{Gaussian} below for a proper definition of this measure. One then hopes to show that the expression \eqref{gibbs2} defines a probability measure with density with respect to the Gaussian measure $\mu$ in \eqref{mu1}, for some appropriate range of $p$, depending on the sign of $\sigma$.
%
%
%

To set our expectations about the constructibility of the measure \eqref{gibbs2}, let us first discuss the construction of the Gibbs measure in the case without harmonic potential and with infrared cut-off, namely on the $d$-dimensional torus $\T^d$ instead of the Euclidean space $\R^d$:
\begin{align}\label{gibbs3}
d\rho(u) = \mathcal Z^{-1}e^{-\frac{\sigma}{p}\int_{\T^d}|u|^pdx}d\mu(u)
\end{align}
with\footnote{Note that we added the mass $\|u\|_{L^2}^2$ to the quadratic part of the Hamiltonian in the definition of the Gaussian measure $\mu$ in \eqref{gibbs3}. This is to avoid a lack of integrability of the density coming from the zero frequency of $u$, making the following discussion substantially simpler.}
\begin{align*}
d\mu(u)= \mathcal  Z^{-1}e^{-\frac12\int_{\T^d}(|\nabla u|^2+|u|^2)dx}du.
\end{align*} 
First, in the defocusing case $\sigma=1$, as pointed out in \cite{BO94}, there is no issue in dimension $d=1$ in making sense of $\rho$ in \eqref{gibbs3}. Indeed, in this case one has that $$V(u)=\frac1p\int_\T |u|^pdx<+\infty,$$ $\mu$-a.s. for any $p\ge 2$.\footnote{Actually for any $p>0$, but recall from \eqref{HNLS} that we limit our discussion to the case $p\ge 2$.} 
Since this potential energy is bounded from below, one directly concludes that $e^{-\sigma V(u)}\in L^1(\mu)$ for any $p\ge 2$ in the defocusing case $\sigma=1$, allowing indeed to give a meaning to $\rho$ as a measure with density with respect to the Gaussian measure $\mu$. 

However, as soon as the dimension $d\ge 2$, one faces the issue that $V(u) =+\infty$ $\mu$-a.s.\ for any $p\ge 2$. 
This is the starting point of the ultraviolet cut-off and Wick renormalization. Namely, one needs to replace the infinite potential energy $V(u)$ with a truncated renormalized energy $\wick{V_N}(u)=\int_{\T^d}F_{p,N}(u)dx$, where $N\in\N$ is a regularization parameter, and $F_{p,N}$ is both a truncation and a renormalization of $|\cdot|^p$, and subsequently take a limit as $N \to \infty$. 
For example, when $d=2$ and $p=2k$, $F_{p,N}(u)=(-1)^kk!L_k\big(|\Pi_{\le N}u|^2;c_N\big)$. Here $L_k$ denotes the $k$-th Laguerre polynomial, $\Pi_{\le N}$ is the projection of $u$ on Fourier frequencies smaller than $N$, and $c_N =\E\int_{\T^d}|\Pi_{\le N}u|^2dx\underset{N\to\infty}{\longrightarrow}+\infty$ is a renormalization constant. 
Then one can show that $\wick{V(u)}=\lim_{N\to\infty}\wick{V_N(u)}$ is well defined $\mu$-a.s., and that $e^{-\sigma\,:V(u):\,}\in L^1(\mu)$, allowing to make sense of $\rho$. 
See for example \cite{OT} for the case $d=2$ and all even integers $p$. In particular, note that contrarily to the case $d=1$, in the case $d=2$, the renormalized potential $\wick{V(u)}$ is not bounded from below. Therefore, one has to give extra arguments to get the exponential integrability of $-\sigma\wick{V(u)}$, see also \cite{Nelson,Simon,GJ}.
The case $d=3$ is even more singular, and the Gibbs measure \eqref{gibbs3} can only been constructed in the case $p=4$.\footnote{The Gibbs measure has been constructed only for real-valued $u$, contrary to the two-dimensional case where both real-valued \cite{Nelson,Simon,GJ} and complex-valued \cite{OT} $u$ are dealt with. Impossibility of the construction for $p\ge6$ is expected in view of the result \cite{ADC}, but as far as the authors aware, a rigourous proof is not available yet.}  
In this case the construction of the corresponding Gibbs measure requires a further renormalization procedure to get a convergent sequence of approximate measures. Moreover, in this case the limiting measure is singular with respect to $\mu$, see \cite{BG,BG2,GH,GJ}. 
In the higher dimensional case $d\ge 4$, the support of $\mu$ becomes too singular, eventually leading to the Gibbs measure $\rho$ being trivial (i.e.\ Gaussian) in this case \cite{ADC,Fro}, even for the smallest possible value $p=4$.

In the focusing case $\sigma=-1$, the situation changes radically. Indeed, even in the case $d=1$, it was observed by Lebowitz, Rose, and Speer \cite{LRS} that as it is, the Gibbs measure \eqref{gibbs3} is ill-defined for any $p>2$. Indeed the potential energy $-\sigma V(u)$ is not exponentially integrable with respect to $\mu$ in this case. 
Notice that this is not an infinite dimensional phenomenon. Indeed, one can easily check that the measure on $\R$ 
$$ \exp\Big(\frac 1 p |x|^p - \frac 12 |x|^2\Big) dx $$
is not finite for $p>2$, hence it cannot be normalised to a probability measure.
To get around this issue, it was proposed in \cite{LRS} to add a mass cut-off, namely to replace the density $e^{-\sigma V(u)}$ for $\rho$ in \eqref{gibbs3} with 
\begin{align}\label{gibbs4}
d\rho(u)= \mathcal  Z^{-1}\ind_{\{\int_\T|u|^2 dx  \le K\}}\exp\big(-\sigma V(u)\big)d\mu(u),
\end{align} 
for some parameter $K>0$, in order to try and recover the integrability of the density with respect to $\mu$. 
The introduction of the mass cutoff is justified by the fact that the mass is a conserved quantity for the flow of the periodic NLS, hence \eqref{gibbs4} represents a (generalized) grand-canonical ensemble for this equation. 

It was then shown in \cite{LRS} that the measure is indeed normalisable for any $2<p<6$ and any $K>0$, and also for $p=6$ and $K< \|Q\|_{L^2}^2$, where $Q$ is the ground state\footnote{Namely, the (unique up to symmetries) minimizer of the Gagliardo-Nirenberg-Sobolev inequality on $\R$ with $\|Q\|_{L^6(\R)}^6 = 3 \| Q'\|_{L^2(\R)}$.} for the quintic NLS on $\R$, while in the case $p=6$ and $K>\|Q\|_{L^2}^2$ or $p>6$ and any $K>0$, the measure \eqref{gibbs4} also becomes ill-defined. Subsequently in \cite{BO94}, another proof of this result was given by Bourgain, that corrected an issue in the original paper \cite{LRS}. More recently, the third author, together with Oh and Sosoe, showed in \cite{OST} that the measure remains well-defined up to the end-point for the mass threshold $K=\|Q\|_{L^2}^2$. This shows a \textit{phase transition} for the one-dimensional focusing Gibbs measure at criticality: the partition function $\mathcal Z_K=\int \mathbf{1}_{\{\|u\|_{L^2}\le K\}} e^{-\sigma V(u)}d\mu(u)$ is not analytic with respect to $K$ when $p=6$. Similar phase transitions at critical exponents between strongly/weakly nonlinear regimes have been observed in other focusing models, see \cite{OOT1,OOT2}. The situation is even more dramatic in higher dimension: in $d=2$, Brydges and Slade showed in \cite{BS} that even for $p=4$ and with the same Wick renormalization as in the defocusing case, the measure is still ill-defined. See also \cite{BB,LW,OST,R,Tzv1,Tzv2,Xian} for generalizations of these results to $d=2$ on the unit disc or $d\ge 1$ with fractional dispersion.

Notice that here we only discussed the construction of the Gibbs measure with infrared cut-off; however removing this cutoff in order to study the measure on the euclidean space is a highly non-trivial endeavour. See for example the recent works \cite{BG3,GH,OTWZ, BL}.

As mentioned before, besides the physical relevance of \eqref{HNLS} to describe Bose-Einstein condensates \cite{BBDBG,DS}, one of the interests in adding the harmonic potential to the Hamiltonian \eqref{Hamiltonian} is that it allows to work directly without infrared cut-off in constructing the Gibbs measure \eqref{gibbs}. However, compared to the previous discussion, much less is known regarding the construction of the measure \eqref{gibbs2} in this case.

The issue of constructing the Gibbs measure \eqref{gibbs2} was initiated by Burq-Thomann-Tzvetkov \cite{BTTz} in dimension $d = 1$. As in the case of the defocusing Gibbs measure without harmonic potential on the circle \eqref{gibbs3}, there is no issue in making sense of \eqref{gibbs2} in the defocusing case $\sigma=1$ for any $p>2$ since it still holds $\int_\R |u|^pdx<+\infty$ $\mu$-a.s. (see for example Corollary~\ref{COR:intp}~(i) below), and since the potential energy $-\frac{\sigma}{p}\int_{\R^d}|u|^p dx$ is bounded from above in this case, thus exponentially integrable with respect to $\mu$. However, one faces the same issue as for \eqref{gibbs3} in the focusing case $\sigma=-1$: the potential energy is no longer bounded from above, and in particular one can show that it is not exponentially integrable anymore. Thus, as in \eqref{gibbs4}, one needs to consider instead the measure with a mass cut-off in the density
\begin{align}\label{Gibbs2}
    d\rho (u) = \mathcal Z^{-1} \ind_{\{|\int_\R :|u|^2: dx | \le K\}} \exp \bigg(-\frac{\sigma}p \int_{\R^d} |u|^p dx \bigg) d\mu (u),
\end{align}
with the Gaussian measure $\mu$ as in \eqref{mu1}. However, an important difference with \eqref{gibbs4} is that even for $d=1$, $\int_\R |u|^2dx=+\infty$ $\mu$-a.s. in the case with harmonic potential. This is why the cut-off in the density in \eqref{Gibbs2} is defined with respect to the \textit{Wick-ordered} mass, namely for a renormalized version of $\|u\|_{L^2(\R)}^2$ as discussed above; see \eqref{Wick} below for the proper definition. This is reminiscent of what happens for \eqref{gibbs3} in higher dimensions $d\ge 2$, where the mass, and actually any nonlinear functional of $u$, becomes ill-defined $\mu$-a.s. and needs to be renormalized. Let us stress that in our case, though, only the $L^2$ norm is $\mu$-a.s. infinite and needs to be renormalized, while the potential energy $\int_\R|u|^pdx$ is $\mu$-a.s. finite for any $p>2$; see Corollary~\ref{COR:intp}~(i) below. Thus the model \eqref{gibbs} with  Hamiltonian \eqref{Hamiltonian} in dimension $d=1$ has an interesting behavior in between the $d=1$ and $d=2$ cases for the measure \eqref{gibbs3} in finite volume without harmonic potential. 

Then in the one-dimensional focusing case $d=1,\sigma=-1$, the measure with Wick-ordered mass cut-off \eqref{Gibbs2} was first constructed by Burq, Thomann, and Tzvetkov \cite{BTTz} in the case $p=4$, and they also prove its invariance under \eqref{HNLS}. However, the whole picture was left incomplete as their argument does not work for general $p > 2$.
The main purpose of this paper is to complete the program initiated in \cite{BTTz} of constructing the focusing Gibbs measures $\rho$ in \eqref{Gibbs2}. 
In particular, we identify a sharp threshold $p^* = 6$,
such that when $2 < p < p^*$ the focusing Gibbs measure \eqref{Gibbs2} is normalizable, 
while it is not normalizable when $p \ge p^*$.
See Theorem \ref{THM:main} below for a detailed statement.
Note that in the case $p=p^*$ the Gibbs measure \eqref{Gibbs2} is \emph{not} normalizable, as opposed to the situation for \eqref{gibbs3} on the one-dimensional torus. 
We also remark that our argument, which is based on a variational formula, is different from the one in \cite{BTTz}.

In the two-dimensional case $d=2$, the construction of the focusing measure \eqref{Gibbs2} with $\sigma=-1$ was first studied by Deng \cite{Deng12} under a radial assumption. Namely, he replaced the Gaussian measure $\mu$ in \eqref{mu1}, defined on $\S'(\R^d)$, by a Gaussian measure defined on the space of radial Schwartz distributions $\S'_{\textup{rad}} (\R^2)$, and showed that the focusing Gibbs measure (with Wick-ordered $L^2$-cutoff) can be constructed when $2 < p < 4$. He also showed that in the defocusing case $\sigma=1$, the measure \eqref{gibbs2} can be constructed for any even $p\ge 4$, and that in both cases the Gibbs measure is invariant under \eqref{HNLS}.
In this paper, 
we shall complete this line of research by also showing the non-normalizability of the \emph{focusing} Gibbs measure with Wick-ordered $L^2$-cutoff for $p \ge 4$.
It turns out that our argument extends to higher dimensions, still under the radial assumption. In particular,
we show that there is a critical index $p^*(d) = 2 + \frac{4}d$ such that the focusing Gibbs measure with Wick-ordered $L^2$-cutoff is normalizable when $2 < p < p^*(d)$,
but non-normalizable when $p \ge p^*(d)$.\footnote{As a matter of fact, there will be a further restriction $p < \frac{2d}{d-2}$ when $d \ge 3$. See Theorem \ref{THM:main} below for the detailed statements.}

Without the radial assumption, the only result that we are aware of is that of \cite{dBDF2} constructing the measure \eqref{gibbs2} in the two-dimensional cubic defocusing case $d=2,\sigma=1,$ and $p=4$, and proving its invariance under \eqref{HNLS}. In this case one also needs to renormalize the potential energy as in the two-dimensional finite volume case without harmonic potential \eqref{gibbs3}.

To conclude this introduction, let us stress one more time that the one-dimensional focusing Gibbs measures \eqref{Gibbs2} and \eqref{gibbs3} with $d = 1$ share many common features, but that our study
shows that their behaviors at the critical exponent $p=6$ are quite distinct: the former is not normalizable while the latter exhibits phase transition. A similar situation appears in the two-dimensional case with radial assumption. In this case the focusing Gibbs measure defined on radial functions on the unit disc $B_2$ with the Dirichlet boundary condition is formally given by
\begin{align}
d\rho_{2,p} (u) = \mathcal Z_{2,p}^{-1}
\ind_{\{\|u\|_{L^2(B_2)}^2\le K\}}
 \exp \bigg(\frac1{p} \int_{B_2} |u|^p dx \bigg) d\mu_2 (u),
\label{Gibbs5}
\end{align} 

\noi
where $\mu_2$ is the corresponding Gaussian measure,
namely the law of the Dirichlet Gaussian free field on $B_2$.
Bourgain-Bulut \cite{BB},
Oh-Sosoe-Tolomeo \cite{OST1},
and Xian \cite{Xian}
showed that there is a phase transition at the critical nonlinearity $p = 4$.
In particular,
they constructed the measure \eqref{Gibbs5} provided (i) $2 < p < 4$
and (ii) $p=4$ and $0 < K \le \|Q\|_{L^2(B_2)}^2$, where $Q$ is the minimizer of the Galiardo-Nirenberg-Sobolev inequality on $\R^2$,
while \eqref{Gibbs5} is not normalizable for (iii) $p = 4$ and $K > \|Q\|_{L^2(B_2)}^2$ and (iv) $p > 4$.
As a sharp contrast,
we will show that there is no phase transition occurring at the critical nonlinearity $p=4$ for \eqref{Gibbs2} with $d = 2$ defined on radial functions.

\subsection{Harmonic potential}
To state more precisely our results, we start with a discussion on the one dimensional Laplacian with harmonic potential on $\R$: 
\begin{align} 
\L = -\partial_x^2 + x^2,
\label{HP}
\end{align}

\noi
associated with the following positive quadratic form $Q$
defined on $C_c^\infty (\R)$ by
\[
Q(f) : = \int_\R |f' (x)|^2 + x^2 |f(x)|^2 dx.
\]

\noi
Let us collect some facts related to the operator $\L$.
The operator $\L$ has a positive self-adjoint extension on $L^2(\R)$
and has eigenfunctions $\{h_n\}_{n \ge 0}$ with
\begin{align} 
h_n (x) = (-1)^n c_n e^{\frac{x^2}2} \frac{d^n}{dx^n} (e^{-x^2})
\label{eigenf}
\end{align}

\noi
and $c_n = (n!)^{-\frac12} 2^{-\frac{n}2} \pi^{-\frac14}$.
Then $\{h_n\}_{n \ge 0}$ is a complete normal basis of $L^2(\R)$.
Let $\ld_n^2$ be the corresponding eigenvalues, i.e.
$\L h_n = \lambda_n^2 h_n$.
Then it is known (see for example \cite{BTTz}) that 
\begin{align}
\lambda_n =  \sqrt{1+2n}.
\label{ld}
\end{align} 

\noi
We will need the following estimates on the eigenfunctions $h_n$ from \cite{YZ}.
\begin{lemma}
\label{LEM:eig1}
With $h_n$ defined in \eqref{eigenf}, 
we have the following estimate
\begin{align}\label{hn}
\| h_n(x) \|_{L^p(\R)}
\lesssim \begin{cases}
\lambda_n^{-\frac13 + \frac2{3p}} & \textup{if } 2 \le p \le 4\\
\lambda_n^{-\frac16} &  \textup{if }  p \ge 4.
\end{cases},
\end{align}
uniformly in $n\in\N$, $p\ge 2$.
\end{lemma}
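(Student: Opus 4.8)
The plan is to deduce the whole range $p\ge 2$ from just three reference estimates, at $p=2,4,\infty$, and to fill in everything else by interpolation. The endpoint $p=2$ is free: $\|h_n\|_{L^2(\R)}=1$ since $\{h_n\}_{n\ge0}$ is orthonormal. The two genuine inputs are
\[
\|h_n\|_{L^\infty(\R)}\lesssim \lambda_n^{-\frac16}
\qquad\text{and}\qquad
\|h_n\|_{L^4(\R)}\lesssim \lambda_n^{-\frac16},
\]
uniformly in $n$. Granting these, both branches of \eqref{hn} follow from the log-convexity of $L^p$ norms. For $2\le p\le 4$, writing $\frac1p=\frac{1-\ta}2+\frac{\ta}4$ with $\ta=2-\frac4p\in[0,1]$, one gets $\|h_n\|_{L^p}\le\|h_n\|_{L^2}^{1-\ta}\|h_n\|_{L^4}^{\ta}\lesssim\lambda_n^{-\ta/6}=\lambda_n^{-\frac13+\frac2{3p}}$; for $p\ge4$, Hölder with $|h_n|^p=|h_n|^{p-4}|h_n|^4$ gives $\|h_n\|_{L^p}^p\le\|h_n\|_{L^\infty}^{p-4}\|h_n\|_{L^4}^4\lesssim\lambda_n^{-p/6}$, i.e.\ $\|h_n\|_{L^p}\lesssim\lambda_n^{-\frac16}$. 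The constants so produced are uniform in $p$, which yields the asserted uniformity.

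The core is therefore the two endpoint bounds, for which I would use the classical pointwise description of the Hermite functions coming from WKB/Airy (Plancherel--Rotach) asymptotics, as recorded in \cite{YZ}. Since $h_n''=(x^2-\lambda_n^2)h_n$, the points $\pm\lambda_n$ are the turning points separating the oscillatory region $|x|<\lambda_n$ from the region of exponential decay, and the standard $n$-uniform bounds read, for $|x|\le 2\lambda_n$,
\[
|h_n(x)|\lesssim
\begin{cases}
(\lambda_n^2-x^2)^{-\frac14}, & |x|\le\lambda_n-\lambda_n^{-\frac13},\\
\lambda_n^{-\frac16}, & \big|\,|x|-\lambda_n\,\big|\le\lambda_n^{-\frac13},
\end{cases}
\]
together with a rapidly decaying Airy/Gaussian tail for $|x|\ge\lambda_n+\lambda_n^{-\frac13}$. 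The $L^\infty$ bound is then immediate: in the bulk $\lambda_n^2-x^2\ge(\lambda_n-|x|)\lambda_n\ge\lambda_n^{2/3}$, so $(\lambda_n^2-x^2)^{-1/4}\le\lambda_n^{-1/6}$, while the turning-point bound is exactly $\lambda_n^{-1/6}$.

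For the $L^4$ bound I would integrate these pointwise bounds region by region. The bulk gives $\int_{|x|\le\lambda_n-\lambda_n^{-1/3}}(\lambda_n^2-x^2)^{-1}\,dx\lesssim\lambda_n^{-1}\log\lambda_n$, the logarithm arising from the part of the integral near the turning points; the turning-point layer, of width $\sim\lambda_n^{-1/3}$, contributes $\lambda_n^{-1/3}(\lambda_n^{-1/6})^4=\lambda_n^{-1}$; and the exponentially decaying tail contributes a negligible $O(\lambda_n^{-1})$ after the rescaling $|x|-\lambda_n\mapsto\lambda_n^{1/3}(|x|-\lambda_n)$. Hence $\|h_n\|_{L^4}^4\lesssim\lambda_n^{-1}\log\lambda_n$, so that $\|h_n\|_{L^4}\lesssim\lambda_n^{-1/4}(\log\lambda_n)^{1/4}\lesssim\lambda_n^{-1/6}$, the last step because $(\log\lambda_n)^{1/4}\lesssim\lambda_n^{1/12}$.

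The main obstacle I anticipate is the $L^4$ endpoint rather than the interpolation: one must absorb the borderline logarithmic divergence of the bulk integral that occurs exactly at the exponent $p=4$, and one needs clean, $n$-uniform control near the turning points $|x|=\lambda_n$ (the delicate Airy regime), which is the substantive content imported from \cite{YZ} --- or, for a self-contained derivation, obtained by matching the WKB expansion in the oscillatory region to the Airy profile across the turning points. Finitely many small $n$ are harmless and may be absorbed into the implicit constant, since for each fixed $n$ the Schwartz function $h_n$ has $p\mapsto\|h_n\|_{L^p}$ bounded on $[2,\infty]$.
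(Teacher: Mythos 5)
Your proposal is mathematically correct, but there is no proof in the paper to compare it against: the paper does not prove Lemma \ref{LEM:eig1} at all, it simply imports the estimate \eqref{hn} from the reference \cite{YZ}. What you have written is essentially the standard argument that underlies such citations. Your reduction is clean: $\|h_n\|_{L^2}=1$ by orthonormality, log-convexity of $p\mapsto \|h_n\|_{L^p}$ gives the exponent $-\frac13+\frac{2}{3p}$ on $[2,4]$ from the endpoints $p=2,4$ (indeed $\ta=2-\frac4p$ and $-\ta/6=-\frac13+\frac2{3p}$), H\"older gives the flat $\lambda_n^{-1/6}$ bound on $[4,\infty]$ from the endpoints $p=4,\infty$, the constants are manifestly uniform in $p$, and finitely many small $n$ are absorbed into the implicit constant as you note. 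The region-by-region integration of the Plancherel--Rotach/Airy envelope is also correct: the bulk integral of $(\lambda_n^2-x^2)^{-1}$ produces $\lambda_n^{-1}\log\lambda_n$, the turning layer produces $\lambda_n^{-1/3}\cdot\lambda_n^{-2/3}=\lambda_n^{-1}$, the tail is $O(\lambda_n^{-1})$ after rescaling, and the resulting loss $(\log\lambda_n)^{1/4}$ is harmlessly absorbed since only $\lambda_n^{-1/6}$ is claimed at $p=4$.

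Two remarks. First, your proof is rigorous only modulo the $n$-uniform pointwise bounds (bulk envelope $(\lambda_n^2-x^2)^{-1/4}$, turning-point bound $\lambda_n^{-1/6}$, Airy tail), which you quote from \cite{YZ}/WKB rather than derive; this puts your write-up at the same level of self-containedness as the paper, just one layer deeper, and those pointwise bounds are indeed classical. Second, your computation shows in passing that the stated bound is not sharp in the interior of the first range: for $2<p<4$ the bulk term dominates and gives the stronger decay $\lambda_n^{\frac1p-\frac12}$, with $\lambda_n^{-1/4}(\log\lambda_n)^{1/4}$ at $p=4$; the interpolated form \eqref{hn} is simply the weaker statement the paper needs, e.g.\ in the proof of Corollary \ref{COR:intp}.
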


We then define the Sobolev spaces associated to the operator $\L$.

\begin{definition}
\label{DEF:sob}
For $1\le p \le \infty$
and $s \in \R$,
we define the harmonic Sobolev space $\W^{s,p} (\R)$
by the norm
\[
\| u\|_{\W^{s,p} (\R)} = \| \L^{\frac{s}2} u\|_{L^p (\R)}.
\]

\noi
When $p=2$,
we write $\W^{s,2} (\R) = \H^s (\R)$
and for $u = \sum_{n=0}^\infty c_n h_n$ we have
$\|u \|_{\H^s (\R)}^2 = \sum_{n=0}^\infty \ld^{2s}_n|c_n|^2$.
\end{definition}

From the above definition, 
we see that $\|f \|_{\H^0 (\R)} = \|f \|_{L^2(\R)}$ and
\begin{align}
\label{H1}
\|f \|_{\H^1 (\R)}^2  = Q(f) =  \int_\R |f' (x)|^2 + x^2 |f(x)|^2 dx.
\end{align}

\noi
We recall the following characterization of harmonic Sobolev spaces.

\begin{lemma}[\cite{DG09}]
\label{LEM:sobolev}
For any $1 < p < \infty$, $s \ge 0$,
there exists $C>0$ such that
\begin{align}
\frac1C \|u\|_{\W^{s,p} (\R)} \le \| \jb{D_x}^s u \|_{L^p (\R)} 
+  \| \jb{x}^s u \|_{L^p (\R)} \le C \|u\|_{\W^{s,p} (\R)}
\label{sobolev_1}
\end{align}
\end{lemma}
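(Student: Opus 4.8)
The plan is to reduce the two-sided estimate \eqref{sobolev_1} to the $L^p(\R)$-boundedness of a finite family of \emph{zeroth-order} operators built out of $\L$, $\jb{D_x}$ and $\jb{x}$, and to extract that boundedness from the isotropic (Shubin) Weyl--H\"ormander calculus combined with Calder\'on--Zygmund theory. By density it suffices to argue for Schwartz functions $u$. The natural framework is the isotropic metric $g = (1+|x|^2+|\xi|^2)^{-1}(dx^2+d\xi^2)$ on phase space, with order weight $m(x,\xi) = (1+|x|^2+|\xi|^2)^{\frac12}$. In this calculus $\L = -\partial_x^2 + x^2$ is globally elliptic of order $2$: its Weyl symbol lies in $S(m^2,g)$ and is bounded below by $c\,m^2$, so the complex powers $\L^{s/2}$ are well defined and lie in $\mathrm{Op}\,S(m^s,g)$. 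The multiplier $\jb{D_x}^s$ and the weight $\jb{x}^s$ are likewise in $\mathrm{Op}\,S(m^s,g)$, and at the symbol level one has the pointwise equivalence $m^s \approx \jb{\xi}^s + \jb{x}^s$, which is exactly the symbol counterpart of \eqref{sobolev_1}.

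For the right-hand inequality I would set $v = \L^{s/2}u$ and note that it suffices to prove that $\jb{D_x}^s\L^{-s/2}$ and $\jb{x}^s\L^{-s/2}$ are bounded on $L^p(\R)$; by the symbolic calculus each of these compositions has symbol in $S(1,g)$, i.e.\ is an operator of order zero. For the left-hand inequality the point is to split phase space. I would fix a partition of unity $1 = \psi_1 + \psi_2$ subordinate to the two regions $\{\jb{\xi}\ges\jb{x}\}$ and $\{\jb{x}\ges\jb{\xi}\}$, Weyl-quantized to operators $\Psi_1,\Psi_2 \in \mathrm{Op}\,S(1,g)$ with $\Psi_1+\Psi_2 = \mathrm{Id}$. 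On $\mathrm{supp}\,\psi_1$ one has $m\approx\jb{\xi}$, so $\L^{s/2}\Psi_1\jb{D_x}^{-s}\in\mathrm{Op}\,S(1,g)$, and symmetrically $\L^{s/2}\Psi_2\jb{x}^{-s}\in\mathrm{Op}\,S(1,g)$. Writing
\[
\L^{s/2}u = \big(\L^{s/2}\Psi_1\jb{D_x}^{-s}\big)\jb{D_x}^s u + \big(\L^{s/2}\Psi_2\jb{x}^{-s}\big)\jb{x}^s u
\]
and applying the $L^p$-boundedness of the two zeroth-order prefactors then yields $\|\L^{s/2}u\|_{L^p}\les\|\jb{D_x}^s u\|_{L^p}+\|\jb{x}^s u\|_{L^p}$.

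Thus everything reduces to the single statement that an arbitrary $A\in\mathrm{Op}\,S(1,g)$ is bounded on $L^p(\R)$ for all $1<p<\infty$. For $p=2$ this is the Calder\'on--Vaillancourt theorem. For general $p$ I would verify that the Schwartz kernel $K_A(x,y)$ of such an $A$ is a Calder\'on--Zygmund kernel: the isotropic symbol estimates translate, away from the diagonal, into $|K_A(x,y)|\les|x-y|^{-1}$ and $|\partial_x K_A|+|\partial_y K_A|\les|x-y|^{-2}$ (the order-zero, one-dimensional singularity), so the standard Calder\'on--Zygmund machinery together with Marcinkiewicz interpolation and duality upgrades the $L^2$ bound to every $1<p<\infty$.

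The main obstacle is precisely this last step: controlling the off-diagonal decay of the kernels of the zeroth-order operators produced by the calculus, uniformly enough to meet the H\"ormander regularity condition. The delicate point is that the weight $\jb{x}^s$ and the multiplier $\jb{D_x}^s$ do not commute and their symbols degenerate in complementary directions, so the kernel estimates must be harvested from the full symbolic (rather than merely Fourier-multiplier) calculus. This is exactly the content established, for general nonnegative polynomial potentials, in \cite{DG09}; the lemma is the special case $V(x)=x^2$, and the scheme above is how I would recover it.
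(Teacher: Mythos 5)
The paper contains no proof of this lemma at all: it is quoted directly from \cite{DG09}, so there is no internal argument to compare against, and your proposal has to stand on its own. It does not, because its central mechanism rests on a false claim. You assert that $\jb{D_x}^s$ and $\jb{x}^s$ lie in $\mathrm{Op}\,S(m^s,g)$ for the isotropic metric $g=m^{-2}(dx^2+d\xi^2)$, $m=(1+x^2+\xi^2)^{1/2}$, and then feed this into the Weyl--H\"ormander composition theorem. This membership fails for every $s\ge 0$ that is not an even integer. Indeed, $S(m^s,g)$ requires $|\partial_\xi^\beta \jb{\xi}^s|\les m^{s-\beta}$ for \emph{all} $\beta$; choose an even $\beta>s$, so that $\partial_\xi^\beta\jb{\xi}^s(0)=(\beta)!\binom{s/2}{\beta/2}\neq 0$ whenever $s$ is not an even integer, and evaluate at points $(x,0)$ with $|x|\to\infty$: the left-hand side is a fixed nonzero constant while the required bound $\jb{x}^{s-\beta}$ tends to $0$. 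Worse, $\jb{\xi}^s$ is not even a $g$-admissible weight, since $g$-slow variation fails: the $g$-ball of radius $O(1)$ centred at $(x,0)$ reaches up to $|\xi|\sim \jb{x}$, where $\jb{\xi}^s\sim\jb{x}^s$ is not comparable to $1$. The same holds symmetrically for $\jb{x}^s$. Consequently none of the compositions your argument needs --- $\jb{D_x}^s\L^{-s/2}$, $\jb{x}^s\L^{-s/2}$, $\L^{s/2}\Psi_1\jb{D_x}^{-s}$, $\L^{s/2}\Psi_2\jb{x}^{-s}$ --- can be placed in $\mathrm{Op}\,S(1,g)$ by the symbolic calculus, because in each product one factor lies outside the calculus. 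The pointwise equivalence $m^s\approx \jb{\xi}^s+\jb{x}^s$ is true, but it is a statement about symbols; the whole content of the lemma is precisely that it upgrades to an operator statement on $L^p$, and that upgrade is what the broken composition step was supposed to supply. (Your last step is sound: genuine elements of $\mathrm{Op}\,S(1,g)$ are Calder\'on--Zygmund operators, hence $L^p$-bounded for $1<p<\infty$; the problem is producing them.)

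Moreover, your closing paragraph resolves "the main obstacle" by invoking \cite{DG09} --- but the $L^p$-boundedness of exactly these compositions, for $V(x)=x^2$, \emph{is} the lemma, so as a proof this is circular. A non-circular argument must obtain those bounds by hand. Two standard routes: (a) represent $\L^{-s/2}$ by subordination to the Hermite semigroup $e^{-t\L}$, whose Mehler kernel is explicit, and derive Calder\'on--Zygmund kernel bounds for $\jb{D_x}^s\L^{-s/2}$ and $\jb{x}^s\L^{-s/2}$ directly (this kernel-based approach, not a symbolic composition, is how results of this type are proved in the literature); or (b) note that your scheme \emph{does} work for even integers $s=2k$, where $\jb{\xi}^{2k}$ and $\jb{x}^{2k}$ are polynomials and genuinely lie in $S(m^{2k},g)$, and then reach general $s\ge 0$ by Stein complex interpolation, using $L^p$-bounds with polynomial growth in $|t|$ for the imaginary powers $\L^{it}$, $\jb{D_x}^{it}$, $\jb{x}^{it}$. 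Either way, the missing analytic input is substantial and is not supplied by the proposal as written.
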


We also recall the Gagliardo-Nirenberg-Sobolev inequality on $\R$,
\begin{equation}
\label{GNSdisp}
\begin{split}
\|u\|_{L^{p}(\R)}^p & \les  \| u\|^{\frac{p-2}{2}}_{\dot H^1 (\R)}\|u\|^{1+ \frac{p}2}_{L^2(\R)},
\end{split}
\end{equation}

\noi
for $p > 2$, which together with \eqref{H1} implies 
\begin{equation}
\label{GNSdisp1}
\begin{split}
\|u\|_{L^{p}(\R)}^p & \les  \| u\|^{\frac{p-2}{2}}_{\H^1 (\R)}\|u\|^{1+ \frac{p}2}_{L^2(\R)},
\end{split}
\end{equation}

\noi
for $p > 2$.

\subsection{Measure construction and main results}
In this subsection,
we describe the procedure we use to construct the Gibbs measures \eqref{gibbs} and \eqref{Gibbs2} with $d = 1$,
and make precise statements of our main results for the one dimensional case.
We postpone the higher dimensional radial cases to the next subsection. 

With the above notations, 
the Hamiltonian \eqref{Hamiltonian} can be rewritten as
\begin{align}
\begin{split}
H(u) & = \frac12 \int_\R |\L^{\frac12} u(x) |^2 dx - \frac1p \int_\R |u(x)|^p dx 
\end{split}
\label{Hamil}
\end{align}

\noi
In particular, using the eigenbasis $\{h_n\}_{n\ge 0}$ of the previous subsection, we can decompose any $u\in\S'(\R)$ as 
$$u = \sum_{n= 0}^\infty u_n h_n,~~~~~~~~u_n=\langle u,h_n\rangle_{L^2(\R)}.$$

\noi
Then, under the coordinates $u = (u_n)$, the Hamiltonian \eqref{Hamil} has the form
\[
H(u) = H \Big(\sum_{n=0}^\infty u_n h_n \Big) = \frac12 \sum_{n=0}^\infty \lambda_n^2 |u_n|^2
- \frac1p \int_\R \Big| \sum_{n=0}^\infty u_n h_n (x) \Big|^p dx.
\]

\noi
From the above computation,
we may define the Gaussian measure with the Cameron-Martin space $\mathcal H^1(\R)$ formally given by
\begin{align}
    d\mu = \mathcal Z^{-1} e^{-\frac12 \|u\|_{\mathcal H^1}^2} du = \mathcal Z^{-1} \prod_{n=0}^\infty e^{-\frac12 \ld^2_n |u_n|^2} d u_n,
    \label{Gaussian}
\end{align}

\noi
where $du_n$ is the Lebesgue measure on $\mathbb C$.
We note that this Gaussian measure $\mu$ is the induced probability measure under the map
\begin{align}
    \label{maps}
    \o \in \O \longmapsto u^\o = \sum_{n\ge 0} \frac{g_n (\o)}{\ld_n} h_n,
\end{align}

\noi
where $\{g_n\}_{n \in \mathbb N}$ is a sequence of independent standard complex-valued Gaussian random variables on a probability space $(\O, \F, \PP)$.
From \eqref{ld} we see that
\begin{align}
\E \big[\| u^\o\|^2_{L^2 (\R)} \big] = \sum_{n=0}^\infty \frac1{\ld_n^2}  = \infty ,
\label{L2}
\end{align}

\noi
which implies that a typical function $u$ in the support of $\mu$ is not square integrable, and thus a renormalization on the $L^2$-norm $\int_\R |u|^2 dx$ is needed.
On the other hand, a computation similar to that in the proof of Corollary \ref{COR:intp} (i)
yields that
$\E [\|u^\o\|_{L^p (\R)}] < \infty$ when $p > 2$.
See also \cite{IRT16}.
Therefore, the potential energy $\frac1{p} \int_\R |u|^p dx$ does not require a renormalization.

To define the Gaussian measure $\mu$ in \eqref{Gaussian} rigorously, 
we start with a finite dimensional version.
First define the spectral projector $\P_N$ by
\[
\P_N u = \P_N \Big( \sum_{n=0}^\infty u_n h_n \Big) = \sum_{n=0}^N u_n  h_n,
\]

\noi
whose image is the finite dimensional space
\[
E_N = \textup{span} \{h_0, h_1, \cdots, h_N\}.
\]

\noi
Through the isometric map 
\begin{align}
(u_n)_{n=0}^N \mapsto \sum_{n=0}^N u_n h_n,
\label{iso}
\end{align}

\noi
from $\mathbb C^{ N+1}$ to $E_N$, 
we may identify $E_N$ with $\mathbb C^{N+1}$.
Consider a Gaussian measure on 
$\mathbb C^{N+1}$ (or on $ \R^{2N+2}$) given by
\[
d  \mu_N = \prod_{n=0}^N \frac{\ld_n^2}{2\pi} e^{-\frac{\ld_n^2}2 |u_n|^2} d u_n d \overline{u}_n,
\]

\noi
where $N \ge 1$.
The Gaussian measure $ \mu_N$ defines a measure on the finite dimensional space $E_N$ 
via the map \eqref{iso},
which will be also denoted by $\mu_N$.
Then, this $\mu_N$ is the induced
probability measure under the map 
\begin{align}
\o \mapsto u_N^\o  :  = \sum_{n=0}^N \frac{g_n (\o)}{\ld_n}  h_n  .
\label{RVN}
\end{align}

\noi
Given any $s >0$, 
the sequence $(u_N^\o)$
is a Cauchy sequence in $L^2 (\O; \H^{- s} (\R))$ converging to $u^\o$ given in \eqref{maps}.
In particular,
the distribution of the random variable $u^\o \in \H^{- s}$ is the
Gaussian measure $\mu$.
The measure $\mu$ can be decomposed as
\begin{align}
\label{mu}
\mu = \mu_N \otimes \mu_N^\perp,
\end{align}

\noi
where the measure $\mu_N^\perp$ is the distribution of the 
random variable given by 
\[
u_N^{\o,\perp} (x):  = \sum_{n = N+1}^\infty \frac{g_n (\o)}{\ld_n} h_n (x).
\]




Recall from the discussion in the introduction that, inspired by Lebowitz-Rose-Speer \cite{LRS},
to define the focusing Gibbs measure \eqref{Gibbs2}, 
a proper mass cut-off is necessary.
But as we pointed out before, from \eqref{L2} we see that $u^\o \notin L^2(\R)$ $\mu$-a.s.,
which motivates us to introduce a Wick-ordered $L^2$-cutoff as in \cite{BO99,BTTz,OST1}.
Given $x \in \R$, $u_N (x)$ in \eqref{RVN} 
is a mean-zero complex-valued Gaussian random variable with variance
\begin{align}
    \label{variance}
    \s_N (x) = \E \big[ |u_N^\o (x)|^2 \big] = \sum_{0 \le n \le N} \frac{h_n^2(x)}{\ld_n^2} ,
\end{align}

\noi
from which we have 
\[
\E\|u_N\|_{L^2}^2=\int_\R \s_N (x) dx = \sum_{n \le N} \frac{1}{\ld_n^2}  \sim \log N \to \infty 
\]

\noi
as $N \to \infty$.
Here $\s_N$ depends on $x\in \R$ as the random process $u^\o$ given by \eqref{maps} is not stationary.
We can then define the Wick power $\wick{|u_N|^2}$ via
\begin{align}
\wick{|u_N|^2} =  |u_N|^2 - \s_N.
\label{Wick}
\end{align}

\noi
It is also known, see for instance \cite[Lemma 3.6]{BTTz}, that $\int_\R :|u_N (x)|^2: dx $ is a Cauchy sequence in $L^2(\H^{-s} (\R), d\mu)$ and converges to a limit, denoted by $\int_\R :|u (x)|^2: dx $, for any $s >0$. 

The main purpose of this paper is then to define the focusing
Gibbs measure \eqref{Gibbs2} with $\sigma=-1$ under the Wick-ordered $L^2$-cutoff $K>0$.
We start with a finite dimensional approximation.
\begin{align}
d \rho_{N} (u) =  \mathcal  Z_{K,N}^{-1} \ind_{ \{ | \int_{\R} : | u_N (x)|^2 : dx | \le K\}}  e^{\frac1p{\| u_N \|_{L^p (\R)}^p}} d\mu_N (u_N)  \otimes d \mu_N^\perp (u_N^\perp),
\label{tru_rho}
\end{align} 

\noi
where $u_N = \P_N u$, $u_N^\perp = u - \P_N u$, and the partition function $Z_{K,N}$ is given by
\begin{align}
    \label{partition}
    \mathcal Z_{K,N} = \int \ind_{ \{ | \int_{\R} : | u_N (x)|^2 : dx | \le K\}}  e^{\frac1p{\| u_N \|_{L^p (\R)}^p}} d\mu (u)  .
\end{align}

\noi
The main result of this paper is to show the sharp criterion so that \eqref{tru_rho} converges to a probability measure as $N \to \infty$.

\medskip

We now state our first main result.

\begin{theorem}
\label{THM:main}
The following statements hold\textup{:}

\smallskip

\noi
\begin{itemize}
\item
[\textup{(i)}] \textup{(subcritical case)}
Let $K > 0$.
If $2< p< 6$, then we have the uniform exponential integrability of the density:
given any finite $r \ge 1$, 
then we have 
\begin{align}
    \label{uniint_p}
    \sup_{N \in \mathbb N} \Big\|  \ind_{ \{ | \int_{\R} : | u_N (x)|^2 : dx | \le K\}}  e^{\frac1 p{\| u_N \|_{L^p (\R)}^p}} \Big\|_{L^r (\mu)}   < \infty .
\end{align}

\noi
Moreover, we have
\begin{align}
    \lim_{N \to \infty}  \ind_{ \{ | \int_{\R} : | u_N (x)|^2 : dx | \le K\}}  e^{\frac1 p{\| u_N \|_{L^p (\R)}^p}} 
    =  \ind_{ \{ | \int_{\R} : | u (x)|^2 : dx | \le K\}}  e^{\frac1 p{\| u \|_{L^p (\R)}^p}} 
    \label{cov-lp}
\end{align}

\noi
in $ L^r(\mu)$.
As a consequence,
the Wick-ordered $L^2$-truncated Gibbs measure $\rho_{N,K}$ in \eqref{tru_rho} converges, in total variation,
to the focusing Gibbs measure $\rho$ defined by
\begin{align}
    \label{rho}
    d \rho (u) = \mathcal  Z_{K}^{-1} \ind_{ \{ | \int_{\R} : | u (x)|^2 : dx | \le K\}}  e^{\frac1 p{\| u \|_{L^p (\R)}^p}} d\mu (u)  .
\end{align}

\noi
Furthermore, the resulting measure $\rho$ is absolutely continuous with respect to the base Gaussian free field $\mu$ in \eqref{mu}.

\smallskip

\noi
\item[\textup{(ii)}] \textup{(critical/supercritical cases)}
Let $p\ge 6$.
Then, for any $K >0$, we have
\begin{align}
    \label{non_int}
    \sup_{N \in \mathbb N} \mathcal Z_{K,N} = \sup_{N \in \mathbb N} \Big\|  \ind_{ \{ | \int_{\R} \wick{| u_N (x)|^2} dx | \le K\}}  e^{\frac1 p{\| u_N \|_{L^p (\R)}^p}} \Big\|_{L^1 (\mu)}   = \infty,
\end{align}

\noi
where $\mathcal Z_{K,N}$ is the partition function given in \eqref{partition}.
The same divergence holds for $\mathcal Z_K$, i.e.\ 
\begin{align}
    \label{non_int2}
   \mathcal Z_{K} =  \int \ind_{ \{ | \int_{\R} \wick{| u (x)|^2} dx | \le K\}}  e^{\frac1 p{\| u \|_{L^p (\R)}^p}}  d\mu   = \infty.
\end{align}
As a consequence, 
the focusing Gibbs measure \eqref{gibbs},
even with a Wick-ordered $L^2$-cutoff,
i.e. \eqref{rho}, 
can not be defined as a probability measure when $p \ge 6$. 

\end{itemize}

\end{theorem}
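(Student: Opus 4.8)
The plan is to handle both regimes with a single device, the Boué--Dupuis variational formula for the Gaussian field $\mu$ in \eqref{Gaussian}. Writing $Y=\sum_{n\ge0}\ld_n^{-1}g_nh_n$ for a sample of $\mu$ and $\Theta=\int_0^1\dot\theta(t)\,dt$ for the terminal value of a drift $\theta\in\Ha$ (a progressively measurable $\H^1$-valued process), this formula represents, for $F$ possibly $+\infty$-valued,
\[
-\log\E_\mu\big[e^{-F(u)}\big]=\inf_{\theta\in\Ha}\E\Big[F(Y+\Theta)+\tfrac12\int_0^1\|\dot\theta(t)\|_{\H^1}^2\,dt\Big].
\]
For (i) I take $F$ with $e^{-F}=\ind_{\{|\int_\R\wick{|u_N|^2}dx|\le K\}}e^{\frac rp\|u_N\|_{L^p}^p}$, so that \eqref{uniint_p} reduces to a lower bound, uniform in $N$ and $\theta$, on the right-hand infimum. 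Abbreviating $\Theta_N=\P_N\Theta$ and the drift energy by $\mathcal E(\theta)=\int_0^1\|\dot\theta\|_{\H^1}^2dt\ge\|\Theta\|_{\H^1}^2$, I need $\E\big[\tfrac12\mathcal E(\theta)-\tfrac rp\|Y_N+\Theta_N\|_{L^p}^p\big]\ge-C(K,r)$ on the constraint event. Three ingredients feed this. First, $\E\|Y_N\|_{L^p}^p$ is bounded uniformly in $N$ for every $p>2$ (the content of Corollary~\ref{COR:intp}~(i), via hypercontractivity and Lemma~\ref{LEM:eig1}), so the potential energy needs no renormalization. Second, expanding $\int\wick{|Y_N+\Theta_N|^2}=\int\wick{|Y_N|^2}+2\Re\langle Y_N,\Theta_N\rangle_{L^2}+\|\Theta_N\|_{L^2}^2$ and pairing the rough factor in $\H^{-\e}$ against $\Theta_N$ in $\H^{\e}$ (interpolating $\|\Theta_N\|_{\H^\e}\le\|\Theta_N\|_{L^2}^{1-\e}\|\Theta_N\|_{\H^1}^\e$), the constraint yields $\|\Theta_N\|_{L^2}^2\le K+R+\tfrac1{100}\mathcal E(\theta)$ with $\E R<\infty$ uniformly in $N$. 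Third, \eqref{GNSdisp1} gives $\|\Theta_N\|_{L^p}^p\les\|\Theta_N\|_{\H^1}^{\frac{p-2}2}\|\Theta_N\|_{L^2}^{\frac p2+1}\les_K\mathcal E(\theta)^{\frac{p-2}4}$ on the constraint. The decisive point is that $2<p<6$ is equivalent to $\frac{p-2}4<1$, so Young's inequality absorbs this into $\tfrac14\mathcal E(\theta)$; combined with the uniform bound on $\E\|Y_N\|_{L^p}^p$ this closes the estimate and proves \eqref{uniint_p}. (Reversing this computation on the constraint surface is exactly what fails at $p=6$, foreshadowing (ii).)

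With \eqref{uniint_p} available for some $r>1$, the convergence \eqref{cov-lp} follows from the Vitali convergence theorem: the uniform $L^{r'}$-bound for any $r'>r$ gives uniform integrability, while $\|u_N\|_{L^p}\to\|u\|_{L^p}$ and $\int_\R\wick{|u_N|^2}\to\int_\R\wick{|u|^2}$ in $L^2(\mu)$ give $\mu$-a.s.\ convergence of the integrands along a subsequence, the indicators converging off the $\mu$-null set $\{|\int_\R\wick{|u|^2}|=K\}$ (null since the law of $\int_\R\wick{|u|^2}$ is absolutely continuous). Total-variation convergence of $\rho_N$ to $\rho$ and the absolute continuity $\rho\ll\mu$ are then immediate, the limiting density being a nonnegative element of $L^1(\mu)$.

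For (ii) I would instead prove a direct lower bound on the partition function through a Cameron--Martin shift by a concentrated soliton. For a unit-mass profile $\phi$ and a scale $\lambda$, set $v_\lambda(x)=\lambda^{1/2}\phi(\lambda x)$, so $\|v_\lambda\|_{L^2}$ is scale-invariant, $\|v_\lambda\|_{L^p}^p=\lambda^{\frac p2-1}\|\phi\|_{L^p}^p$, and $\|v_\lambda\|_{\H^1}^2=\lambda^2\|\phi'\|_{L^2}^2+\lambda^{-2}\|x\phi\|_{L^2}^2$; the harmonic part is negligible for concentrated profiles, so the effective energy is governed by the free inequality \eqref{GNSdisp}. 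I choose $v=v_N\in E_N$ with the \emph{large} mass $\|v_N\|_{L^2}^2=\al_N:=\int_\R\s_N\,dx=\sum_{n\le N}\ld_n^{-2}\sim\log N$, which is precisely the mass that the Wick-ordered constraint $|\int_\R\wick{|u_N|^2}|\le K$ permits. Restricting the integral defining $\mathcal Z_{K,N}$ to a small ball around $v_N$ compatible with the constraint and estimating its $\mu_N$-measure by Cameron--Martin yields
\[
\mathcal Z_{K,N}\ges\exp\Big(\tfrac1p\|v_N\|_{L^p}^p-\tfrac12\|v_N\|_{\H^1}^2-o\big(\|v_N\|_{\H^1}^2\big)\Big).
\]
For $p>6$ one has $\frac p2-1>2$, so $\lambda^{\frac p2-1}$ beats $\lambda^2$ and the exponent tends to $+\infty$ already for a fixed mass $\le K$. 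At $p=6$ the scaling exponents match, and $\tfrac16\|v_N\|_{L^6}^6-\tfrac12\|v_N\|_{\H^1}^2\sim\lambda^2\big(\tfrac16\|\phi\|_{L^6}^6-\tfrac12\|\phi'\|_{L^2}^2\big)$ becomes positive precisely because the available mass $\al_N\sim\log N$ eventually exceeds the sharp Gagliardo--Nirenberg threshold $\|Q\|_{L^2}^2$; this removes any dependence on $K$ and distinguishes the harmonic model from the torus model \eqref{gibbs3}, where the genuine mass is bounded by $K$. Letting $N\to\infty$ gives \eqref{non_int}, and the same profiles taken in $\H^1$ and inserted into the limiting variational problem give \eqref{non_int2}.

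I expect the main obstacle to be the critical case $p=6$ at small $K$. There the soliton scaling is exactly balanced, so concentration alone is insufficient, and one must exploit that the Wick renormalization decouples the mass constraint from the genuine $L^2$-mass, permitting profiles of mass $\al_N\sim\log N$ and thereby crossing the sharp Gagliardo--Nirenberg threshold once $N$ is large. Making the Cameron--Martin small-ball lower bound quantitative while simultaneously enforcing $|\int_\R\wick{|u_N|^2}|\le K$ on the shifted ball—so that the $o(\|v_N\|_{\H^1}^2)$ entropy and the cross terms between $v_N$ and the field do not overwhelm the $L^6$ gain—is the delicate point.
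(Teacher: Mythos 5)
Your part~(i) is, in substance, the paper's own proof: the same Bou\'e--Dupuis reduction, the same exploitation of the cut-off via $\H^{-\e}$--$\H^{\e}$ duality and interpolation, the same Gagliardo--Nirenberg/Young step governed by $\tfrac{p-2}{2}<2\iff p<6$, and Vitali's theorem together with absolute continuity of the law of $\int_\R \wick{|u|^2}\,dx$ for \eqref{cov-lp}. One bookkeeping caveat: the cut-off must be used so that $\|\Theta_N\|_{\H^1}$ enters the resulting bound on $\|\Theta_N\|_{L^2}^2$ with an \emph{arbitrarily small} power, as in \eqref{cutoff2}; with your cruder form $\|\Theta_N\|_{L^2}^2\le K+R+\tfrac1{100}\mathcal E(\theta)$, substitution into Gagliardo--Nirenberg produces $\mathcal E(\theta)^{p/2}$ with $p/2>1$, which no smallness of constants can absorb. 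The paper's ordering of the two Young applications is what closes this step; your outline would need the same fix.

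Part~(ii) contains a genuine gap, located exactly at the point you call ``delicate'' --- and it is not a technicality: the claimed bound is false. Your mechanism rests on the assertion that the Wick ordering ``permits profiles of mass $\al_N\sim\log N$''. It does not. The constraint $|\int_\R\wick{|u_N|^2}\,dx|\le K$ forces the \emph{genuine} mass $\|u_N\|_{L^2}^2$ to lie within $K$ of $\int_\R\s_N\,dx\sim\log N$, but under $\mu_N$ the Gaussian field $w$ \emph{already} has $\|w\|_{L^2}^2=\int_\R\s_N\,dx+O(1)$, since the variance of $\int_\R\wick{|w_N|^2}\,dx$ is $2\sum_n\ld_n^{-4}=O(1)$ (Corollary \ref{COR:WCE}). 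That mass budget is spent by the field itself and is not available to a deterministic shift. Quantitatively, for $u=v_N+w$ with $\|v_N\|_{L^2}^2=\al_N$ and concentration scale $M$,
\[
\int_\R\wick{|u|^2}\,dx=\al_N+2\Re\langle v_N,w\rangle_{L^2}+\int_\R\wick{|w|^2}\,dx,
\]
where $2\Re\langle v_N,w\rangle_{L^2}$ is a centered Gaussian of variance $\les \al_N M^{-2}$ (by \eqref{fm2} with $s=-1$) and $\int_\R\wick{|w|^2}\,dx=O(1)$. Hence on any shifted ball the Wick mass equals $\al_N(1+o(1))\gg K$ with overwhelming probability: the set ``ball around $v_N$ compatible with the constraint'' is a large-deviation event, not a Cameron--Martin event. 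Forcing $2\Re\langle v_N,w\rangle_{L^2}\approx-\al_N$ costs $\exp(-c\,\al_N M^2)$, which is comparable to --- not $o$ of --- $\tfrac12\|v_N\|_{\H^1}^2\sim\al_N M^2$. Worse, writing $w=-\g v_N+w^\perp$ with $w^\perp\perp v_N$ in $L^2$, the constraint reads $(1-\g)^2\al_N+\|w^\perp\|_{L^2}^2\le\int_\R\s_N\,dx+K$, and lowering $\|w^\perp\|_{L^2}^2$ below $\int_\R\s_N\,dx-\rho$ has probability $\exp(-c\,e^{c\rho})$ (lower tail of $\sum_n\ld_n^{-2}(|g_n|^2-1)$); so on any event of affordable probability the \emph{surviving} profile $(1-\g)v_N$ has mass $O(K+\log M)$, never $\al_N$. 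Your claimed exponent $\tfrac1p\|v_N\|_{L^p}^p-\tfrac12\|v_N\|_{\H^1}^2-o(\|v_N\|_{\H^1}^2)$ is therefore unattainable, and the conclusion that at $p=6$ ``this removes any dependence on $K$'' does not follow. For $p>6$ your scaling argument can be salvaged (a fixed profile mass $\le K$ suffices there), but at the critical $p=6$ --- the main novelty of the theorem --- the route collapses to a torus-type, $K$-dependent threshold.

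This is precisely why the paper does not use a deterministic shift. Its drift \eqref{drift} contains the \emph{adapted, stochastic} component $-Z_M$ of Lemma \ref{LEM:approx}, which tracks the field and damps its low modes, removing mass $\sim\log M$ from the field at drift cost only $O(M)$ by \eqref{NRZ6} --- something no Cameron--Martin shift can imitate --- and the amplitude $\al_{M,N}$ in \eqref{fmb1} is tuned so that the total Wick mass collapses to $\wick{\|Y_N-Z_M\|_{L^2}^2}+2\sqrt{\al_{M,N}}\Re\int_\R(Y_N-Z_M)f_M\,dx=O(M^{-1/2})$, giving Lemma \ref{LEM:key}. The accessible profile mass is thus $\sim\log M$ (the concentration scale), not $\log N$, and the divergence at $p=6$ comes from the comparison of $M^2(\log M)^{p/2}$ (gain) against $M^2\log M$ (cost). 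If you insist on a shift-based argument, the missing idea can be supplied in static form --- take a profile of \emph{fixed} mass $\b>\|Q\|_{L^2}^2$ and intersect with the fixed-positive-probability event $\{\int_\R\wick{|w_N|^2}\,dx\le-\b\}$, which genuinely frees up mass $\b$ before letting $M\to\infty$ --- but some such device for making room inside the cut-off is indispensable, and your proposal contains none.
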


\medskip

When $p=4$, 
Theorem \ref{THM:main} - (i) provides an alternative proof of the normalizability result for the focusing measure \eqref{rho} by 
Burq-Thomann-Tzvetkov \cite{BTTz}, 
whose proof is based on a Fourier analysis approach developed by Bourgain \cite{BO94}.
Our proof of Theorem \ref{THM:main} - (i), however, is based on the variational approach due to Barashkov-Gubinelli \cite{BG},
which is robust enough to treat all subcritical nonlinearities $2 < p < 6$ simultaneously.   
See Lemma \ref{LEM:var} for the Bou\'e-Dupuis variational formula \cite{BD,Ust}, which plays a key role in this argument.
As a matter of fact,
we will only show the uniform bound \eqref{uniint_p}.
Once the uniform bound \eqref{uniint_p} is established, the $L^r$-convergence \eqref{cov-lp} of the densities follows from the convergence in measure of the densities. See Remark 3.8 in \cite{Tzv2}.  

The proof of Theorem \ref{THM:main} - (ii) is inspired by the recent works by the third author and collaborators \cite{OOT1,OOT2,TW} and by Oh, the second and third authors \cite{OST}, where other non-normalizability results are shown for different measures.
The main step is to construct a drift term which approximates a blow-up profile, such that the Wick-ordered $L^2$-cutoff does not exclude this blow-up profile for any cutoff size $K >0$.
In particular, when $p \ge 6$,
we show that the Wick-ordered $L^2$-cutoff does not exclude the blow-up profiles, which drives the energy functional to blow up. See Section \ref{SEC:non} for more details.

Theorem \ref{THM:main} completes the program initiated by Burq-Thomann-Tzvetkov \cite{BTTz} on constructing the focusing Gibbs measure \eqref{rho}
by providing a sharp criterion.
In particular, we show that the focusing Gibbs measure with a Wick-ordered $L^2$-cutoff is normalizable when $ p< 6$ and non-normalizable when $p\ge 6$.
The main novelty of our result lies in the critical case $p=6$,
where we show the non-normalizability of the focusing Gibbs measure.
Again, we point out that our result provides a sharp contrast with the recent work of Oh-Sosoe-Tolomeo \cite{OST1}, where they show the phase transition occurs at the critical case $p=6$ for the Gibbs measure on $\T$.

\medskip

We shall prove Theorem \ref{THM:main} (i) in Section \ref{SEC:nor},
and (ii) in Section \ref{SEC:non}.
All the proofs are done by exploiting the variational approach by Barashkov-Gubinelli in \cite{BG}.
In the critical/supercritical cases, we show the blow up of the partition function $\mathcal Z_{K,N}$ by constructing a drift term along the blow up profiles.
Similar argument appeared in \cite{LW,OOT1,OST,R}.

\subsection{Higher dimensional cases}
We can extend the main result Theorem \ref{THM:main} for dimensional one to higher dimensions with radial assumption.
In this subsection, we explain the higher dimensional analogue of Theorem \ref{THM:main}.
Recall the Laplacian confined with harmonic potential
\begin{align}
\label{HPd}
\mathcal L = -\Delta + |x|^2,
\end{align}

\noi
where $\Delta= \sum_{i=1}^d \partial_{i}^2$ is the Laplacian operator on $\R^d$ and $|x|^2 = \sum_{i=1}^d x_i^d$ is the harmonic potential.
We define the radial Sobolev spaces $\mathcal W^{s,2}_{\textup{rad}} (\R^d)$ of radial functions in higher dimension induced by the norm:
\[
\| u\|_{\mathcal W^{s,p} (\R^d)} = \| \mathcal L^{\frac{s}2} u \|_{L^p (\R^d)}.
\]

\noi
We also write $\mathcal W^{s,2}_{\textup{rad}} (\R^d) = \mathcal H^s_{\textup{rad}} (\R^d)$.
Similar to Lemma \ref{LEM:sobolev}, we have
\begin{align}
\frac1C \|u\|_{\W^{s,p} (\R^d)} \le \| \jb{\nabla}^s u \|_{L^p (\R^d)} 
+  \| \jb{x}^s u \|_{L^p (\R^d)} \le C \|u\|_{\W^{s,p} (\R^d)}.
\label{sobolev_d}
\end{align}

\noi
Analogously to the one dimensional case \eqref{GNSdisp1},
we have the Gagliardo-Nirenberg-Sobolev inequality on $\R^d$ for $d\ge 1$:
let \textup{(i)} $p > 2$ if $d = 1,2$ and \textup{(ii)} $2 < p < \frac{2d}{d-2}$ if $d \ge 3$, then we have
\begin{equation}
\label{GNSdisp2}
\begin{split}
\|u\|_{L^{p}(\R^d)}^p & \les   \| u\|^{\frac{(p-2)d}{2}}_{\H^1 (\R^d)}\|u\|^{2+\frac{p-2}{2} (2-d)}_{L^2(\R^d)}.
\end{split}
\end{equation}

\noi
It is known \cite{IRT16} that the symmetric operator $\mathcal L$ has a self-adjoint extension on $\mathcal H^1_{\textup{rad}} (\R^d)$ with eigenvalues
\[
\ld_n^2 = 4n + d
\]

\noi
for $n \ge 0$ and associated eigenfunctions $h_n$, i.e. $
\mathcal L h_n = \ld_n^2 h_n$.
Here we abuse notations by using $\ld_n$ and $h_n$ to denote eigenvalues and eigenfunctions of the radial harmonic oscillator \eqref{HPd}. 
We will need the following dispersive estimate on the radial eigenfuncsions $h_n$.
For more details, please see \cite{IRT16} and reference therein.

\begin{lemma}[Proposition 2.4 in \cite{IRT16}]
\label{LEM:eig2}
Let $d \ge 2$. Then we have the following estimate
\begin{align}
    \label{en}
    \|h_n\|_{L^p(\R^d)} \les
    \begin{cases}
    \ld_n^{-d (\frac12 - \frac1p)}, & \textup{ for } p \in [2, \frac{2d}{d-1});\\
    \ld_n^{- \frac12} \log^{\frac1p} \ld_n, & \textup{ for } p = \frac{2d}{d-1} ;\\
    \ld_n^{ d (\frac12 - \frac1p) -1}, & \textup{ for } p \in ( \frac{2d}{d-1}, \infty].
    \end{cases}
\end{align}

\noi
Here the implicit constant only depends on $d$. 
\end{lemma}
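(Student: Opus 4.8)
The estimate concerns the $L^p$ norms of the radial Hermite functions, and the plan is to reduce it to classical pointwise asymptotics for Laguerre polynomials. First I would make the eigenfunctions explicit. Acting on radial functions $u(x) = f(|x|)$, the operator $\mathcal L = -\Delta + |x|^2$ becomes the Sturm--Liouville operator $-f'' - \frac{d-1}{r} f' + r^2 f$, whose normalized eigenfunction with eigenvalue $\lambda_n^2 = 4n+d$ is
\begin{align*}
h_n(x) = c_n\, e^{-|x|^2/2}\, L_n^{(\alpha)}(|x|^2), \qquad \alpha = \tfrac{d}{2}-1,
\end{align*}
where $L_n^{(\alpha)}$ is the generalized Laguerre polynomial and $c_n$ is fixed by $\|h_n\|_{L^2(\R^d)}=1$. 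The Laguerre orthogonality relation together with Stirling's formula gives $c_n \sim \lambda_n^{1-d/2}$. Passing to polar coordinates and substituting $t = |x|^2$ (so that $r^{d-1}\,dr = \frac12 t^{\alpha}\,dt$) reduces the claim to the one-dimensional estimate
\begin{align*}
\|h_n\|_{L^p(\R^d)}^p \sim c_n^p \int_0^\infty \big| e^{-t/2} L_n^{(\alpha)}(t)\big|^p\, t^{\alpha}\, dt \sim \int_0^\infty t^{\alpha(1-\frac p2)}\,\big|\varphi_n^{(\alpha)}(t)\big|^p\, dt,
\end{align*}
where $\varphi_n^{(\alpha)}(t) = \big(\frac{n!}{\Gamma(n+\alpha+1)}\big)^{1/2} t^{\alpha/2} e^{-t/2} L_n^{(\alpha)}(t)$ is the $L^2((0,\infty),dt)$-normalized Laguerre function, and the powers of $c_n$ and $\Gamma(n+\alpha+1)/n!$ cancel.

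Next I would insert the classical Askey--Wainger / Muckenhoupt pointwise bounds for $\varphi_n^{(\alpha)}$, writing $\nu = 4n+d = \lambda_n^2$. These split $(0,\infty)$ into four regions according to the size of $t$ relative to $\nu$: a near-origin region $t \lesssim \nu^{-1}$ with $\varphi_n^{(\alpha)}(t) \lesssim (\nu t)^{\alpha/2}$; an oscillatory bulk $\nu^{-1} \lesssim t \lesssim \nu$ with $|\varphi_n^{(\alpha)}(t)| \lesssim (\nu t)^{-1/4}$; an Airy turning-point region $|t-\nu| \lesssim \nu^{1/3}$ with $|\varphi_n^{(\alpha)}(t)| \lesssim \nu^{-1/4}(\nu^{1/3}+|t-\nu|)^{-1/4}$; and an exponentially small tail $t \gtrsim \nu$. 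Here the hypothesis $d \ge 2$, i.e.\ $\alpha \ge 0$, enters crucially: it is what makes these radial functions peak at the origin (in contrast to the $d=1$ case of Lemma \ref{LEM:eig1}, where $\alpha = -\frac12$ and the peak sits at the turning point).

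I would then estimate the contribution of each region to the integral. A direct computation shows that the near-origin region contributes $\sim \lambda_n^{d(\frac12-\frac1p)-1}$, which is increasing in $p$, while the oscillatory bulk contributes $\sim \lambda_n^{-d(\frac12-\frac1p)}$, which is decreasing in $p$; for every $d \ge 2$ the turning-point and tail contributions never strictly dominate these two. The two leading terms are equal precisely when $p = \frac{2d}{d-1}$: for $p < \frac{2d}{d-1}$ the bulk wins, for $p > \frac{2d}{d-1}$ the origin wins, and at the endpoint $p=\frac{2d}{d-1}$ the bulk integrand becomes scale-invariant ($\sim t^{-1}\,dt$ over $\nu^{-1}\lesssim t \lesssim \nu$), producing a factor $\log\nu \sim \log\lambda_n$; taking $p$-th roots converts this into the factor $\log^{1/p}\lambda_n$ of the endpoint case. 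Combining the three regimes yields exactly \eqref{en}.

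The main obstacle is establishing and correctly matching the uniform Laguerre asymptotics across all four regions, in particular the Airy behavior through the turning point, with constants uniform in $n$. Once these pointwise bounds are available, the remaining steps are elementary one-dimensional integrations; the only genuine subtlety is to verify that the turning-point contribution never overtakes the origin and bulk terms, which is precisely where the restriction $d \ge 2$ (equivalently $\alpha \ge 0$) is used.
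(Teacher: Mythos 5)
The paper contains no proof of this lemma: it is imported verbatim as Proposition~2.4 of \cite{IRT16}, so there is nothing internal to compare against. Your proposal supplies an actual argument, and it is the standard one---essentially the route taken in the cited reference and in Thangavelu's book \cite{Th}: write the radial eigenfunctions as $h_n = c_n e^{-|x|^2/2} L_n^{(\alpha)}(|x|^2)$ with $\alpha = \tfrac d2 - 1$, reduce to the $L^2(dt)$-normalized Laguerre functions, and integrate the Askey--Wainger/Muckenhoupt pointwise bounds region by region. Your exponent bookkeeping checks out: $c_n^2 \sim n!/\Gamma(n+\alpha+1) \sim \lambda_n^{2-d}$ so the normalizations cancel as claimed; the near-origin region contributes $\lambda_n^{d(\frac12-\frac1p)-1}$ and the oscillatory bulk contributes $\lambda_n^{-d(\frac12-\frac1p)}$, with crossover exactly where $\alpha(1-\tfrac p2)-\tfrac p4 = -1$, i.e.\ $p = \tfrac{2d}{d-1}$, at which point the bulk integrand is $t^{-1}$ over $[\nu^{-1},\nu]$ and yields the $\log^{1/p}\lambda_n$; and the turning-point term never strictly dominates, the key arithmetic fact being $\tfrac{2d}{d-1}\le 4$ for $d \ge 2$ (for $p<\tfrac{2d}{d-1}$ it matches the bulk order, for larger $p$ it is dominated by the origin term, as one verifies from $p(d-1)>2d$). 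What your approach buys is a self-contained proof, modulo the classical Laguerre asymptotics; what the paper's citation buys is brevity, since those asymptotics are themselves nontrivial uniform estimates. Two imprecisions to fix in a careful write-up: the Airy-type bound $\nu^{-1/4}(\nu^{1/3}+|t-\nu|)^{-1/4}$ is valid on the whole region $t\sim\nu$ (say $\tfrac\nu2 \le t \le \tfrac{3\nu}2$), not only on the window $|t-\nu|\lesssim \nu^{1/3}$, and the exponential smallness holds only for $t \ge (1+\delta)\nu$; as written, your four regions leave the range $\nu^{1/3}\lesssim|t-\nu|\lesssim\nu$ ambiguously covered, although your subsequent integrations tacitly use the correct partition. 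Also note that the uniform Laguerre bounds you invoke are standard precisely for $\alpha \ge 0$, which is a second, more basic place where the hypothesis $d \ge 2$ enters.
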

Similarly to \eqref{Gaussian},
we may define the Gaussian measure with the Cameron-Martin space $\mathcal H^1_{\textup{rad}} (\R^d)$ formally given by
\begin{align}
    d\mu = \mathcal Z^{-1} e^{-\frac12 \|u\|_{\mathcal H^1}^2} du = \mathcal Z^{-1} \prod_{n=0}^\infty e^{-\frac12 \ld_n^2 |u_n|^2} d u_n d\overline{u}_n,
    \label{Gaussian2}
\end{align}

\noi
where $u_n = \jb{u,h_n}$ and $du_n d\overline{u}_n$ is the Lebesgue measure on $\mathbb C$.
We note that this Gaussian measure $\mu$ is the induced probability measure under the map
\begin{align}
    \label{maps2}
    \o \in \O \longmapsto u^\o = \sum_{n\ge 0} \frac{g_n (\o)}{\ld_n} h_n,
\end{align}

\noi
which converges in $\mathcal H^{-s}_{\textup{rad}} (\R^d)$ almost surely for any $s>0$.
In particular, $u^\o \in \mathcal H^{-s}_{\textup{rad}} (\R^d)$ for any $s > 0$.

Given $x \in \R$ and $u^\o$ in \eqref{maps2}, then $u_N = \P_{\le N} u^\o$ is a mean-zero complex-valued Gaussian random variable with variance
\begin{align}
    \label{variance2}
    \s_N (x) = \E \big[ |u_N^\o (x)|^2 \big] = \sum_{0 \le n \le N} \frac{h_n^2(x)}{\ld_n^2} .
\end{align}

\noi
We define the Wick powers $\wick{|u_N|^2}$ via
\begin{align}
\wick{|u_N|^2} =  |u_N|^2 - \s_N.
\label{Wick2}
\end{align}

\noi
Similarly as in the one-dimensional case,
one can show that $\int_{\R^d} \wick{|u_N (x)|^2} dx $ is a Cauchy sequence in $L^2(\H^{-s}_{\textup{rad}} (\R^d), d\mu)$ and converges to a limit, denoted by $\int_{\R^d} \wick{|u (x)|^2} dx $, for any $s >0$. 
We then define the finite dimensional version of the Gibbs measure of radial functions on $\R^d$,
\begin{align}
d \rho_{N} (u) =  \mathcal  Z_{N}^{-1} \ind_{ \{ | \int_{\R^d} \wick{| u_N (x)|^2} dx | \le K\}}  e^{\frac1p{\| u_N \|_{L^p (\R^d)}^p}} d\mu_N (u_N)  \otimes d \mu_N^\perp (u_N^\perp),
\label{tru_rho2}
\end{align} 

\noi
where $u_N = \P_N u$, $u_N^\perp = u - \P_N u$, and the partition function $\mathcal Z_{N}$ is given by
\begin{align}
    \label{partition2}
    \mathcal Z_{N} = \int \ind_{ \{ | \int_{\R^d} \wick{| u_N (x)|^2} dx | \le K\}}  e^{\frac1p{\| u_N \|_{L^p (\R^d)}^p}} d\mu (u)  .
\end{align}

\noi
As in the 1-dimensional case, we give a sharp criterion so that \eqref{tru_rho} converges to a probability measure as $N \to \infty$.

\medskip

We now state our second main result.

\begin{theorem}
\label{THM:main2}
Given $d \ge 2$, 
define $p^* (d) = 2 + \frac{4}d$.
Then, 
the following statements hold\textup{:}

\smallskip

\noi
\begin{itemize}
\item
[\textup{(i)}] \textup{(subcritical case)}
Let $K > 0$.
Given $2< p< p^*(d)$, for any finite $r \ge 1$, then we have 
\begin{align}
    \label{uniint_d}
    \sup_{N \in \mathbb N} \Big\|  \ind_{ \{ | \int_{\R^d} : | u_N (x)|^2 : dx | \le K\}}  e^{\frac1 p{\| u_N \|_{L^p (\R^d)}^p}} \Big\|_{L^r (\mu)} < \infty .
\end{align}

\noi
Moreover, we have
\begin{align}
    \lim_{N \to \infty}  \bigg(\ind_{ \{ | \int_{\R^d} : | u_N (x)|^2 : dx | \le K\}}  e^{\frac1 p{\| u_N \|_{L^p (\R^d)}^p}} \bigg)
    =  \ind_{ \{ | \int_{\R^d} : | u (x)|^2 : dx | \le K\}}  e^{\frac1 p{\| u \|_{L^p (\R^d)}^p}} 
    \label{cov-lp_d}
\end{align}

\noi
in $ L^r(\mu)$.
As a consequence,
the Wick-ordered $L^2$-truncated Gibbs measure $\rho_{N,K}$ in \eqref{tru_rho2} converges, in total variation,
to the focusing Gibbs measure $\rho$ defined by
\begin{align}
    \label{rho_d}
    d \rho (u) =  \mathcal Z_{K}^{-1} \ind_{ \{ | \int_{\R^d} : | u (x)|^2 : dx | \le K\}}  e^{\frac1 p{\| u \|_{L^p (\R^d)}^p}} d\mu (u)  .
\end{align}

\noi
Furthermore, the resulting measure $\rho$ is absolutely continuous with respect to the base Gaussian free field $\mu$ in \eqref{Gaussian2}.

\smallskip

\noi
\item[\textup{(ii)}] \textup{(critical/supercritical cases)}
Let $ p \ge p^*(d) $ when $d = 2$, and  $p^*(d) \le p < \frac{2d}{d-2} $ when $d \ge 3$.
Then, for any $K >0$, we have
\begin{align}
    \label{non_int_d}
    \sup_{N \in \mathbb N} \mathcal Z_{K,N} = \sup_{N \in \mathbb N} \Big\|  \ind_{ \{ | \int_{\R^d} \wick{| u_N (x)|^2} dx|  \le K\}}  e^{\frac1 p{\| u_N \|_{L^p (\R^d)}^p}} \Big\|_{L^1 (\mu)}   = \infty,
\end{align}

\noi
where $\mathcal Z_{K,N}$ is the partition function given in \eqref{partition2}.
Moreover, 
\begin{align}
    \label{non_int_d2}
    \mathcal Z_K := \int \ind_{ \{ | \int_{\R^d} \wick{| u(x)|^2} dx | \le K\}}  e^{\frac1 p{\| u \|_{L^p (\R^d)}^p}} d \mu   = \infty.
\end{align}
In particular, 
the focusing Gibbs measure \eqref{gibbs} defined on radial functions,
even with a Wick-ordered $L^2$-cutoff,
i.e. \eqref{rho_d}, can not be defined as a probability measure when $p\ge p^*(d)$.
\end{itemize}

\end{theorem}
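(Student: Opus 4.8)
The plan is to prove both parts by the variational (Boué--Dupuis) approach of Lemma~\ref{LEM:var}, exactly as in the one--dimensional Theorem~\ref{THM:main}, the only new inputs being the $d$--dimensional Gagliardo--Nirenberg--Sobolev inequality \eqref{GNSdisp2}, the radial dispersive bounds of Lemma~\ref{LEM:eig2}, and the Sobolev equivalence \eqref{sobolev_d}. Writing $u = Y + \theta$ with $Y$ the truncated Gaussian field from \eqref{maps2} and $\theta$ a drift of finite $\mathcal H^1_{\textup{rad}}$--energy, the formula recasts $-\log \mathcal Z_N$ (and the $L^r(\mu)$--norm of the density) as an infimum over $\theta$ of $\mathbb E\big[-\tfrac1p\|Y+\theta\|_{L^p}^p + \tfrac12\|\theta\|_{\mathcal H^1}^2\big]$, subject to $Y+\theta$ lying in the Wick--mass cut--off set.

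For part (i) I would first record the probabilistic input: using Lemma~\ref{LEM:eig2} one shows $\mathbb E\|Y\|_{L^p}^p < \infty$ uniformly in the truncation, which is exactly where the restriction $p < \tfrac{2d}{d-2}$ for $d\ge 3$ enters, as it guarantees the relevant stochastic objects converge. The core is the deterministic drift bound. After splitting $-\|Y+\theta\|_{L^p}^p \ge -2^{p-1}\|Y\|_{L^p}^p - 2^{p-1}\|\theta\|_{L^p}^p$ and applying \eqref{GNSdisp2} to $\theta$, one gets $\tfrac1p\|\theta\|_{L^p}^p \lesssim \|\theta\|_{\mathcal H^1}^{(p-2)d/2}\,\|\theta\|_{L^2}^{\,2+\frac{p-2}{2}(2-d)}$, and the decisive observation is that the exponent of the drift energy satisfies $\tfrac{(p-2)d}{2}<2$ precisely when $p<p^*(d)=2+\tfrac4d$. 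Hence for subcritical $p$ Young's inequality absorbs a small multiple of $\|\theta\|_{\mathcal H^1}^2$ into the quadratic drift term, leaving only a power of $\|\theta\|_{L^2}$. This residual factor is controlled by the cut--off: expanding the Wick--ordered mass as $\int \wick{|u|^2}\,dx = \int \wick{|Y|^2}\,dx + 2\,\Re\langle Y,\theta\rangle + \|\theta\|_{L^2}^2$ and using that $\int \wick{|Y|^2}\,dx$ has good moments (the Cauchy convergence of \eqref{Wick2}) together with a Young bound on the cross term, the constraint $|\int \wick{|u|^2}\,dx|\le K$ forces $\|\theta\|_{L^2}^2 \lesssim K + (\text{controllable terms})$. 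Combining these yields a uniform-in-$N$ lower bound on the functional, hence the uniform exponential integrability \eqref{uniint_d}; the $L^r$--convergence \eqref{cov-lp_d} and the absolute continuity then follow from convergence in measure exactly as in the one--dimensional case.

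For part (ii) the strategy is to exhibit a sequence of admissible drifts along which the functional tends to $-\infty$, forcing $\mathcal Z_N\to\infty$ and likewise $\mathcal Z_K=\infty$. The natural candidate is a concentrating ground--state profile $\theta = M\,Q_\lambda$ with $Q_\lambda(x)=\lambda^{d/2}Q(\lambda x)$ built from the $L^2$--critical optimizer $Q$ of \eqref{GNSdisp2} (suitably approximated inside $E_N$). The scaling is transparent: $\|Q_\lambda\|_{L^2}$ is $\lambda$--independent, $\|Q_\lambda\|_{\mathcal H^1}^2\sim\lambda^2$, while $\|Q_\lambda\|_{L^p}^p\sim\lambda^{(p-2)d/2}$. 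In the supercritical range $p>p^*(d)$ one has $(p-2)d/2>2$, so the potential gain $\sim M^p\lambda^{(p-2)d/2}$ dominates the kinetic cost $\sim M^2\lambda^2$ as $\lambda\to\infty$ for \emph{any} fixed amplitude $M$; since $M$ may be taken arbitrarily small, the constraint $\|\theta\|_{L^2}^2\lesssim K$ is met for every $K>0$, and the functional diverges to $-\infty$.

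The critical case $p=p^*(d)$ is the heart of the argument and the main obstacle. Here $(p-2)d/2=2$, so the potential gain and the kinetic cost scale at the \emph{same} rate $\lambda^2$, and by the sharp form of \eqref{GNSdisp2} a single soliton of mass $M^2\|Q\|_{L^2}^2$ drives the functional to $-\infty$ only when its mass exceeds the Weinstein threshold $\|Q\|_{L^2}^2$; on the torus or disc, cf.\ \eqref{Gibbs5}, this is precisely what produces a phase transition at $K=\|Q\|_{L^2}^2$. The point of the present setting is that the cut--off is imposed on the \emph{Wick--ordered} mass rather than on the genuine $L^2$ mass, and I would exploit this by constructing the drift so that its net contribution to $\int \wick{|u|^2}\,dx$ is rendered negligible for \emph{every} cut--off size $K$, thereby decoupling the blow--up profile from the mass budget and removing the threshold. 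Making this decoupling rigorous, i.e.\ showing that the Wick--ordered cut--off does not exclude the critical blow--up profile for any $K>0$ while keeping the kinetic cost dominated by the potential gain, is the delicate step; it is exactly the mechanism already present in the one--dimensional $p=6$ endpoint of Theorem~\ref{THM:main}(ii), and it is what distinguishes non--normalizability for all $K$ from the torus phase transition.
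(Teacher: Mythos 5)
Your part~(i) follows the paper's own argument essentially verbatim: Bou\'e--Dupuis, the cut-off set controlling $\|\Theta_N\|_{L^2}^2$ up to terms with good moments, the Gagliardo--Nirenberg--Sobolev inequality \eqref{GNSdisp2} with the observation that $\tfrac{(p-2)d}{2}<2$ precisely for $p<p^*(d)$, Young absorption into the quadratic drift cost, and then Vitali plus convergence in probability for \eqref{cov-lp_d}. The crude splitting $\|Y+\theta\|_{L^p}^p\le 2^{p-1}(\|Y\|_{L^p}^p+\|\theta\|_{L^p}^p)$ in place of the paper's $(1+\eps)/C_\eps$ splitting is harmless in the strictly subcritical range, and your remark about $p<\tfrac{2d}{d-2}$ is actually vacuous in part~(i), since $p^*(d)<\tfrac{2d}{d-2}$ automatically; that restriction only plays a role in part~(ii).

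Part~(ii), however, has a genuine gap exactly where you yourself locate ``the heart of the argument'': the critical case $p=p^*(d)$. You correctly note that a profile whose genuine $L^2$ mass stays below the Weinstein threshold cannot drive the functional to $-\infty$ at criticality, and you therefore propose to make the profile's contribution to the Wick-ordered mass ``negligible'' --- but you never exhibit a drift achieving this, and deferring to ``the mechanism already present in Theorem~\ref{THM:main}(ii)'' is circular, since that statement is proved in the paper by the very same construction you are being asked to supply. The missing idea is quantitative. The paper takes $\Theta^0_N=-Z_M+\sqrt{\al_{M,N}}\,\P_{\le N}f_M$, where $Z_M$ (Lemma~\ref{LEM:approx}) is an SDE approximation of the low-frequency Gaussian field and $\al_{M,N}$ in \eqref{fmb1} is tuned so that the expected Wick mass of $Y_N+\Theta^0_N$ vanishes. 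Cancelling $\P_{\le M}Y_N$ releases a Wick-mass budget $\E\big[2\Re\int Y_N\cj{Z_M}\,dx-\int|Z_M|^2dx\big]\sim\log M$ (this is where the logarithmic divergence of the renormalization constant enters), so the profile carries the amplitude $\sqrt{\al_{M,N}}\sim\sqrt{\log M}$, i.e.\ its \emph{genuine} mass is $\sim\log M$, unbounded, while its Wick-ordered mass contribution is $O(M^{-1/2}\log^{1/2} M)$ in $L^2(\O)$, so Chebyshev gives Lemma~\ref{LEM:key} for every $K>0$. The divergence then comes from comparing the potential gain $M^{\frac{pd}2-d}(\log M)^{\frac p2}$ against the kinetic cost $M^2\log M$: at criticality $\tfrac{pd}2-d=2$, and it is \emph{only} the exponent $\tfrac p2>1$ on the logarithm that wins. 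Without this construction your critical case does not close. Two smaller gaps: in the supercritical case your deterministic drift $MQ_\ld$ requires a uniform-in-$N$ lower bound on $\PP\big(\big|\int\wick{|Y_N|^2}dx+2M\Re\langle Y_N,Q_\ld\rangle+M^2\|Q\|_{L^2}^2\big|\le K\big)$, an anti-concentration statement you do not prove (the paper's construction sidesteps it by making the entire Wick mass small in $L^2(\O)$); and the step from $\sup_N\mathcal Z_{K,N}=\infty$ to \eqref{non_int_d2} is not automatic --- the paper needs the factorization $\mu=\mu_N\otimes\mu_N^\perp$, the tail event $\O_K^\perp$ of probability $\ge\tfrac12$, and the divergence \eqref{pax} for the smaller coupling $r=\tfrac12$ to conclude.
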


\noi
When $d = 2$, 
Theorem \ref{THM:main2} - (i) provides an alternative proof of the normalizability result for the focusing measure \eqref{rho_d} by Deng \cite{Deng12}, 
whose proof is based on a Fourier analysis approach developed by Bourgain \cite{BO94}.
Our proof of Theorem \ref{THM:main} - (i), however, is based on the variational approach due to Barashkov-Gubinelli \cite{BG}. 
The main novelty of Theorem \ref{THM:main2} lies in (ii), which implies that the normalizability result by Deng \cite{Deng12} is sharp in the dimension $d=2$.
Furthermore, the higher dimensional $d \ge 3$ cases are also new.

\medskip

The proof of Theorem \ref{THM:main2} is almost identical to that of Theorem \ref{THM:main},
whose proofs will be shown in Sections \ref{SEC:nor}
and \ref{SEC:non}.

\begin{remark}\rm 
(i) Theorem \ref{THM:main2} (ii) only provides the non-normalizability in the critical and supercritical cases up to $p<\frac{2d}{d-2}$ when $d\ge 3$. This is due to the fact that, when $p\ge\frac{2d}{d-2}$, the potential $\int_{\R^d}|u|^pdx$ becomes infinite $\mu$-a.s. and needs to be renormalized. We believe that our construction would still provide the non-normalizability of the Gibbs measure in this case, even after having Wick-ordered the potential energy. 

However, we chose to restrict to a non-singular case in this paper, where only the mass cut-off needs to be renormalized, to stay closer to the previous works \cite{BTTz,Deng12} and complete the picture on normalizability vs non-normalizability in the focusing non-singular case. 

The same remark holds for the defocusing case: we explained in the introduction that the Gibbs measure is well-defined in the non-singular defocusing case due to the upper bound on the density $\exp\big(-\frac1p\int_{\R^d}|u|^pdx\big)$ when $p<\frac{2d}{d-2}$, $d\ge 3$. However, the constructibility of the defocusing Gibbs measure in the singular case $p\ge \frac{2d}{d-2}$ is an interesting open problem.

\smallskip

(ii) Without radial assumption, a renormalization would be needed for any $p>2$ as soon as $d\ge 2$. It would also be very interesting to study the construction of the renormalized Gibbs measure in the defocusing case $d=2$, $p>4$ in order to complete the picture from the results in \cite{dBDF2}. On the other hand, in the focusing case, even with a renormalization of the potential energy, we expect the Gibbs measure to be non-normalizable, similarly as in \cite{BS}. Indeed, we already explained that the Gibbs measure $\rho$ in \eqref{gibbs2} is more singular, for the same dimension, than its finite volume anharmonic counterpart in \eqref{gibbs3}.
\end{remark}

\section{Preliminary}
\label{SEC:subcritical}

\noi
In this section, 
we collect some useful tools.

\subsection{Variational formulation}
In order to prove Theorem \ref{THM:main} and Theorem \ref{THM:main2},
we recall a variational formula for the partition functional 
$\mathcal Z_{p,K}$, similarly to \cite{BG,BG2,OOT1,OOT2,OST,LW}.
Let $W(t)$ denote a cylindrical Brownian motion in $L^2(\R)$ (respectively $L^2_{\textup{rad}}(\R^d)$) defined by
\begin{align}
\label{Bro}
W(t) = \sum_{n\ge 0} B_n (t) h_n,
\end{align}

\noi
where $\{h_n\}_{n\ge 0}$ is the sequence of normalized eigenfunctions of the operator $\mathcal L$ given in \eqref{HP} (or \eqref{HPd} under the radial symmetry assumption) in the previous section, 
and $\{B_n\}_{n \ge 0}$ is a sequence of mutually independent complex-valued Brownian motions.
We define a centered Gaussian process $Y(t)$ by
\begin{align}
\label{Yt}
Y (t) = \L^{-\frac12} W(t) = \sum_{n\ge 0} \frac{B_n (t)}{\ld_n} h_n.
\end{align}

\noi
Then, 
$Y (t)$ is well-defined in $\H^{-s} (\R^d)$ for any $s >0$ and $d \ge 1$, see Corollary \ref{COR:intp} below. 
But $Y(t)$ is not in $L^2(\R^d)$, since we have
\[
\E \big[\|Y(t)\|_{L^2(\R^d)}^2\big] \sim \sum_{n\ge 0} \frac{\E [|B_n (t)|^2] }{n+1}  \|h_n(x)\|_{L^2(\R^d)}^2 = \sum_{n\ge 0} \frac{2 t }{n+1} = \infty
\]

\noi
unless $t = 0$.
From \eqref{Yt}, we see that
\begin{align}
\label{law}
\textup{Law} (Y(1)) = \mu,
\end{align} 
where $\mu$ is the Gaussian free field given in \eqref{mu}.
In what follows, we set $Y_N (1)= \P_{\le N} Y (1)$ and thus $\textup{Law} (Y_N(1)) = (\P_{\le N})_*\mu$,
i.e. the law of the random variable $Y_N(1)$ is the pushforward of $\mu$ under the projection $\P_{\le N}$.
Furthermore, we note that
\[
\E [ |Y_N (1)|^2 ]  = \s_N (x),
\]

\noi
where $\s_N (x) $ is as in \eqref{variance}.

Let $\mathbb{H}_a$ be the space of drifts,
which consists of progressively measurable processes belonging to 
$L^2 ([0,1]; L^2 (\R^d))$,
$\mathbb P$-almost surely.
One of the key tools in this paper is the following Bou\'e-Dupuis variational formula \cite{BD,Ust},
see also \cite[Lemma 3.1]{OST}.

\begin{lemma}
\label{LEM:var}
Let $Y(t)$ be as in \eqref{Yt} and $Y_N (t) = \P_{\le N} Y (t)$.
Suppose that $F: C^\infty (\R^d) \to \R$ is measurable
such that $\E \big[ | F(Y_N (1))|^p\big] < \infty$ and 
$\E \big[ | e^{- F(Y_N (1))}|^q \big] < \infty$ for some $1 < p,q < \infty$ 
with $\frac1p + \frac1q =1$.
Then, we have
\begin{align}
\label{var}
-\log \E \Big[ e^{F(Y_N (1))} \Big] = \inf_{\theta \in \mathbb{H}_a} \E
\bigg[F\big(Y_N(1) + \P_{\le N}I(\theta) (1) \big) + \frac12 \int_0^1 \| \theta (t) \|_{L^2_x (\R^d)}^2 dt\bigg],
\end{align}

\noi
where $I (\theta)$ is defined by
\[
I (\theta) (t) = \int_0^t \L^{-\frac12} \theta (\tau) d\tau
\]

\noi
and
the expectation $\E = \E_{\mathbb P}$ is respect to the underlying probability measure $\mathbb P$.
\end{lemma}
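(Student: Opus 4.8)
The plan is to obtain \eqref{var} as a direct consequence of the classical Bou\'e--Dupuis variational representation \cite{BD,Ust} for functionals of a cylindrical Brownian motion, reducing the drift problem to a finite-dimensional one along the way. Since $F$ depends on the path $W$ only through the finite-dimensional, smooth random vector $Y_N(1)=\P_{\le N}\L^{-\frac12}W(1)\in E_N$, I would regard $W\mapsto F\big(\P_{\le N}\L^{-\frac12}W(1)\big)$ as a functional on Wiener space and apply the representation to it, so that the left-hand side of \eqref{var} is produced directly. The two integrability hypotheses, $\E[|F(Y_N(1))|^p]<\infty$ and $\E[|e^{-F(Y_N(1))}|^q]<\infty$ with $\frac1p+\frac1q=1$, are precisely what is needed (through H\"older's inequality) to guarantee that both sides of the representation are finite and that the theorem is applicable.

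The algebraic heart of the argument is the identification of the two drift parametrizations. In the classical formula the path is shifted as $W(t)\mapsto W(t)+\int_0^t v(\tau)\,d\tau$ for an adapted $v\in\Ha$, so that $Y(1)=\L^{-\frac12}W(1)$ is replaced by $Y(1)+\L^{-\frac12}\int_0^1 v(\tau)\,d\tau$. Because $\L^{-\frac12}$ and $\P_{\le N}$ are simultaneously diagonal in the eigenbasis $\{h_n\}$ they commute, and $\L^{-\frac12}\int_0^1 v\,d\tau=I(v)(1)$; hence the shifted low-frequency part is exactly $Y_N(1)+\P_{\le N}I(v)(1)$, which is the argument of $F$ in \eqref{var} upon setting $\theta=v$. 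The quadratic cost is unchanged under this identification, $\frac12\int_0^1\|v(t)\|_{L^2}^2\,dt=\frac12\int_0^1\|\theta(t)\|_{L^2}^2\,dt$, so the classical variational problem and \eqref{var} coincide term by term.

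To make the reduction to finite dimensions rigorous I would first note that the right-hand side of \eqref{var} depends on $\theta$ only through its low-frequency projection: since $\P_{\le N}I(\theta)(1)=\int_0^1\L^{-\frac12}\P_{\le N}\theta(\tau)\,d\tau$ involves only the components $\langle\theta,h_n\rangle$ with $n\le N$, while the cost splits as $\frac12\int_0^1\|\P_{\le N}\theta\|_{L^2}^2\,dt+\frac12\int_0^1\|\P_{>N}\theta\|_{L^2}^2\,dt$, any minimizing sequence may be taken with $\P_{>N}\theta=0$ at no increase in cost. Thus the infimum over $\Ha$ equals the infimum over drifts valued in $E_N$, where $\P_{\le N}W=\sum_{n\le N}B_n h_n$ is an ordinary Brownian motion on $E_N\cong\mathbb C^{N+1}$ and \eqref{var} becomes exactly the finite-dimensional Bou\'e--Dupuis formula; the latter can be quoted directly, or proved by a Girsanov change of measure with Jensen's inequality for the lower bound and an explicit near-optimal feedback drift for the matching upper bound.

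The step I expect to require the most care is not this algebra but the verification of the integrability hypotheses for the concrete functionals $F$ arising in Theorems~\ref{THM:main} and \ref{THM:main2}, which combine $\frac1p\|\cdot\|_{L^p}^p$ with the Wick-ordered mass indicator; one must ensure the relevant exponential moments are finite so that the representation is valid and the interchange of infimum and expectation in the Girsanov argument is justified. Here the finite-dimensional truncation is decisive, since on $E_N$ the map $u\mapsto F(u)$ is a continuous functional of a nondegenerate Gaussian vector, making all of these moments manifestly finite for fixed $N$.
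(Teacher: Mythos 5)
Your proposal is correct and takes essentially the same route as the paper, which in fact offers no proof of Lemma \ref{LEM:var} at all but simply quotes the classical Bou\'e--Dupuis/\"Ust\"unel representation (\cite{BD,Ust}, see also \cite[Lemma 3.1]{OST}) — exactly the result you invoke, supplemented by the routine dictionary you spell out ($\P_{\le N}$ commutes with $\L^{-\frac12}$, the quadratic cost is unchanged, and the infimum may be restricted to $E_N$-valued drifts since discarding $\P_{>N}\theta$ never increases the objective). One remark: as printed, \eqref{var} has a sign inconsistency — in view of the hypothesis $\E\big[|e^{-F(Y_N(1))}|^q\big]<\infty$ and of its application in \eqref{var4}, the left-hand side should read $-\log \E\big[e^{-F(Y_N(1))}\big]$ — and your argument, run through the classical formula, produces precisely this corrected form.
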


In the following, we shall choose
\[
F(Y_N) =\frac1 p{\| Y_N \|_{L^p (\R^d)}^p}  \ind_{ \{ | \int_{\R^d} : | Y_N (x)|^2 : dx | \le K\}}  ,
\]

\noi
and then the main tasks in Section \ref{SEC:nor} and Section \ref{SEC:non} are to estimate the right hand side of \eqref{var}.

\medskip

We see that $I(\theta)(1)$ enjoys 
the following pathwise regularity bound.

\begin{lemma}
\label{LEM:bounds}
For any $\theta \in \mathbb{H}_a$,
we have
\begin{align}
\label{I}
\| I (\theta) (1)\|^2_{\mathcal H^1 (\R^d)} \le \int_0^1 \| \theta (t) \|_{L^2_x}^2 dt.
\end{align}
\end{lemma}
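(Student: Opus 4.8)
The plan is to use the definition of the $\H^1$ norm together with the fact that $\L^{\frac12}$ is a fixed, time-independent operator that commutes with integration in the time variable $\tau$; the whole statement then reduces to a one-line application of Minkowski's inequality and Cauchy--Schwarz.

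First I would recall from Definition \ref{DEF:sob} that $\|u\|_{\H^1(\R^d)}^2 = \|\L^{\frac12}u\|_{L^2(\R^d)}^2$. Applying $\L^{\frac12}$ to $I(\theta)(1) = \int_0^1 \L^{-\frac12}\theta(\tau)\,d\tau$ and pulling the operator inside the integral gives
\[
\L^{\frac12}I(\theta)(1) = \int_0^1 \L^{\frac12}\L^{-\frac12}\theta(\tau)\,d\tau = \int_0^1 \theta(\tau)\,d\tau,
\]
since $\L^{\frac12}\L^{-\frac12} = \mathrm{Id}$ on the relevant spaces. Hence
\[
\|I(\theta)(1)\|_{\H^1(\R^d)}^2 = \Big\|\int_0^1\theta(\tau)\,d\tau\Big\|_{L^2(\R^d)}^2.
\]

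Next I would estimate the right-hand side by Minkowski's integral inequality followed by Cauchy--Schwarz in $\tau$:
\[
\Big\|\int_0^1\theta(\tau)\,d\tau\Big\|_{L^2}^2 \le \Big(\int_0^1\|\theta(\tau)\|_{L^2}\,d\tau\Big)^2 \le \int_0^1\|\theta(\tau)\|_{L^2}^2\,d\tau,
\]
which is precisely \eqref{I}. Alternatively, expanding $\theta(\tau) = \sum_n \theta_n(\tau)h_n$ in the eigenbasis, the $n$-th coefficient of $I(\theta)(1)$ equals $\ld_n^{-1}\int_0^1\theta_n(\tau)\,d\tau$, so that the factors $\ld_n^2$ in the $\H^1$ norm cancel and $\|I(\theta)(1)\|_{\H^1}^2 = \sum_n \big|\int_0^1 \theta_n(\tau)\,d\tau\big|^2$; applying Cauchy--Schwarz to each summand before summing yields the same bound.

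The argument involves no genuine obstacle: it is a direct computation. The only points requiring minor care are the justification for moving $\L^{\frac12}$ inside the time integral --- legitimate precisely because $\L^{\frac12}$ does not depend on $\tau$ --- and the finiteness of all the quantities appearing, which is guaranteed by the hypothesis $\theta \in \mathbb{H}_a$, i.e.\ $\theta \in L^2([0,1];L^2(\R^d))$ almost surely.
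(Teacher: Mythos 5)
Your proof is correct and follows essentially the same route as the paper: Minkowski's integral inequality combined with the identity $\|\L^{-\frac12}\theta(\tau)\|_{\H^1} = \|\theta(\tau)\|_{L^2}$, followed by Cauchy--Schwarz in $\tau$. In fact you make explicit the final Cauchy--Schwarz step, which the paper's displayed computation leaves implicit (its first line, bounding the \emph{squared} $\H^1$ norm by an unsquared integral, only yields \eqref{I} after that step), so your write-up is if anything slightly more careful.
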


\begin{proof}
From Minkowski's inequality and  Definition \ref{DEF:sob}, we have
\begin{align*}
    \begin{split}
    \| I (\theta) (1)\|^2_{\mathcal H^1 (\R^d)} & \le \int_0^1 \big\| \mathcal L^{-\frac12} \theta (\tau) \big\|_{\mathcal H^1 (\R^d)} d\tau\\
    & = \int_0^1  \| \theta (\tau) \|_{L^2 (\R^d)} d\tau,
    \end{split}
\end{align*}

\noi
which gives \eqref{I}.
\end{proof}

\subsection{Wiener chaos estimate}
In this subsection, 
we recall the Wiener chaos estimates,
which plays a key role in our analysis in Section \ref{SEC:nor} and Section \ref{SEC:non}.

For convenience, we first recall the Khintchine inequality, which is a special case of the Wiener chaos estimates and has an elementary proof,  see \cite[Lemma 4.2]{BT08}.

\begin{lemma}[Khintchine inequality]
\label{LEM:Khi}
Let $\{g_n (\o)\}$ be a sequence of independent, complex, normalized Gaussian random variables.
Then, there exists $C>0$ such that for all $p\ge 2$ and $\{c_n\} \in \l^2(\mathbb N)$,
\[
\bigg\| \sum_{n \ge 0} c_n g_n (\o) \bigg\|_{L^p (\O)} \le C \sqrt p  \bigg( \sum_{n\ge 0} |c_n|^2 \bigg)^{\frac12}.
\]
\end{lemma}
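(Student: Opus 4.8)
The key observation is that the random series $S := \sum_{n\ge 0} c_n g_n(\o)$ is itself a \emph{single} complex Gaussian random variable, so the whole problem reduces to estimating the moments of one Gaussian. The plan proceeds in three steps.

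First I would check that $S$ is well-defined and identify its law. Since $\{c_n\}\in\l^2(\N)$ and the $g_n$ are independent with $\E|g_n|^2=1$, the partial sums $\sum_{n=0}^N c_n g_n$ form a Cauchy sequence in $L^2(\O)$, hence converge to a limit $S\in L^2(\O)$. As an $L^2$-limit of jointly Gaussian variables, $S$ is again a centered complex Gaussian, and by independence its second moment is $\E|S|^2=\sum_{n\ge 0}|c_n|^2=:\s^2$.

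Next I would exploit homogeneity: both sides of the claimed inequality scale linearly in the vector $(c_n)$, and indeed $\|S\|_{L^p(\O)}=\s\,\|Z\|_{L^p(\O)}$, where $Z$ denotes a standard normalized complex Gaussian. Thus it suffices to prove $\|Z\|_{L^p(\O)}\le C\sqrt p$ uniformly in $p\ge 2$. For such a $Z$ the quantity $|Z|^2$ is an exponential random variable of mean one, so the moments can be computed exactly: $\E|Z|^p=\E\big[(|Z|^2)^{p/2}\big]=\G(\tfrac p2+1)$.

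Finally I would invoke Stirling's formula to conclude $\G(\tfrac p2+1)^{1/p}\le C\sqrt p$ for all $p\ge 2$, with a constant independent of $p$; this is precisely what produces the factor $\sqrt p$ in the statement. The only point requiring any care is the uniformity of this constant as $p\to\infty$, which is exactly what the asymptotics $\G(x+1)^{1/x}\sim x/e$ provide. (Alternatively, one can avoid the exact Gamma computation by first establishing the exponential-moment bound $\E[e^{t|Z|^2}]<\infty$ for $t<1$ and then extracting $L^p$ bounds via Markov's inequality, optimizing in $t$, which reproduces the same $\sqrt p$ growth.) There is no genuine obstacle here; the lemma is elementary precisely because a Gaussian series collapses to a single Gaussian, reducing everything to a one-dimensional moment estimate.
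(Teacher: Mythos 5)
Your proof is correct, but it takes a genuinely different route from the paper's. The paper never proves Lemma \ref{LEM:Khi} directly: it cites \cite[Lemma 4.2]{BT08}, whose argument bounds the exponential moment $\E\big[e^{t\Re S}\big]$ factor by factor using independence, then obtains the tail bound by Chernoff's inequality and optimization in $t$; the paper also observes that the lemma is the $k=1$ case of the Wiener chaos estimate (Lemma \ref{LEM:WCE}), which rests on hypercontractivity of the Ornstein--Uhlenbeck semigroup. Your argument instead exploits the $2$-stability of the Gaussian law: since $\{c_n\}\in\ell^2(\mathbb N)$, the partial sums converge in $L^2(\Omega)$ to a single centered complex Gaussian $S$ with $\E|S|^2=\sum_{n\ge0}|c_n|^2$, so by scaling the claim reduces to the one-variable computation $\E|Z|^p=\Gamma\big(\tfrac p2+1\big)$ (valid because $|Z|^2$ is exponential with mean one) together with Stirling's formula, which produces exactly the $\sqrt p$ growth. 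For Gaussian coefficients this is the most elementary and essentially sharp proof, and all the steps you outline (Cauchy in $L^2$, Gaussianity of the limit, rotation invariance giving $S\sim\sigma Z$) are sound. What the paper's route buys is generality: the exponential-moment argument of \cite{BT08} applies verbatim to any independent sub-Gaussian sequence (e.g.\ Rademacher signs, where your collapse-to-one-Gaussian trick is unavailable), and the hypercontractivity argument extends to chaoses of order $k\ge2$, which the paper genuinely needs elsewhere (Lemma \ref{LEM:WCE} is applied to the quadratic Wick powers in Corollary \ref{COR:WCE}). Your reduction cannot give either extension, but for the lemma as stated it is a complete proof.
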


We then have
the following consequence of Khintchine inequality.
\begin{corollary}
\label{COR:intp}
\textup{(i)}. Given $ 2 < p < \infty$ when $d = 1,2$ and $2 < p < \frac{2d}{d-2}$ when $d \ge 3$, we have
\[
\E \big[\| Y_N (1) \|_{L^p(\R^d)}^p \big] \les_p  1 ,
\]

\noi
where the constant only depends on $p$.

\textup{(ii)}. Let $\dl > 0$ and $1 \le p < \infty$.
Then, we have 
\[
\E \big[\| Y_N (1) \|_{\mathcal H^{-\delta}(\R^d)}^p \big] \les_p  1 ,
\]

\textup{(iii)}. Let $p,d$ be as in \textup{(i)} when $\|\cdot\|_X = \| \cdot \|_{L^p}$, or $p,d,\dl$ as in \textup{(ii)} when $\|\cdot\|_X = \| \cdot \|_{\mathcal H^{-\dl}}$ respectively. Then $Y(1)\in X$, the sequence $Y_N(1)$ is Cauchy in $L^p(\Omega; X)$, and we have that 
\[
\E \big[\| Y(1) - Y_N(1) \|_{X}^p \big] \les_p  N^{-\gamma},
\]
for some $\gamma = \gamma(p,d,\dl) > 0$.
\end{corollary}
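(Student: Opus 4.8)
The plan is to reduce all three statements to the convergence, and the tail decay, of a single explicit series
\[
S_N := \sum_{n=0}^N \ld_n^{-2} \|h_n\|_{L^p(\R^d)}^2,
\]
using the Khintchine inequality (Lemma \ref{LEM:Khi}) together with Minkowski's inequality, and then invoking the eigenfunction bounds of Lemmas \ref{LEM:eig1}--\ref{LEM:eig2}.

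For (i), since $p\ge 2$, I would first apply Tonelli to write $\E[\|Y_N(1)\|_{L^p}^p] = \int_{\R^d}\E[|Y_N(1)(x)|^p]\,dx$. For fixed $x$, the quantity $Y_N(1)(x) = \sum_{n=0}^N \ld_n^{-1} h_n(x)\, g_n$ is a sum of independent Gaussians, so Khintchine gives $\E[|Y_N(1)(x)|^p]\les p^{p/2}\s_N(x)^{p/2}$, with $\s_N$ the variance from \eqref{variance}/\eqref{variance2}. Because $p/2\ge 1$, Minkowski's inequality in $L^{p/2}_x$ then lets me pull the spatial norm inside the sum defining $\s_N$:
\[
\int_{\R^d}\s_N(x)^{p/2}\,dx = \|\s_N\|_{L^{p/2}_x}^{p/2} \le \Big(\sum_{n=0}^N \ld_n^{-2}\|h_n\|_{L^p_x}^2\Big)^{p/2} = S_N^{p/2}.
\]
It remains to bound $S_N$ uniformly in $N$. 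Inserting Lemma \ref{LEM:eig1} (for $d=1$) or Lemma \ref{LEM:eig2} (for $d\ge 2$), together with $\ld_n^2 = 1+2n$ (resp. $4n+d$), a direct exponent count shows that each summand decays like $n^{-1-\eps}$ for some $\eps>0$ precisely when $p>2$ (for $d=1,2$) or $2<p<\frac{2d}{d-2}$ (for $d\ge 3$), which yields the claimed uniform bound.

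For (ii) the argument is the same but simpler, as no eigenfunction estimates are needed. For $p\ge 2$ I would expand $\|Y_N(1)\|_{\H^{-\dl}}^2 = \sum_{n=0}^N \ld_n^{-2-2\dl}|g_n|^2$ and apply Minkowski in $L^{p/2}_\omega$ together with $\||g_n|^2\|_{L^{p/2}_\omega} = \|g_n\|_{L^p_\omega}^2\les p$ to get $\E[\|Y_N(1)\|_{\H^{-\dl}}^p]\les p^{p/2}\big(\sum_n \ld_n^{-2-2\dl}\big)^{p/2}$, and the series converges since $\ld_n^{-2-2\dl}\sim n^{-1-\dl}$. The range $1\le p<2$ then follows from Jensen's inequality. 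For (iii) I would run these same computations on the tails: writing $Y(1)-Y_N(1) = \sum_{n>N}\ld_n^{-1}h_n g_n$, the estimates above give $\E[\|Y(1)-Y_N(1)\|_X^p]\les\big(\sum_{n>N} a_n\big)^{p/2}$ with $a_n=\ld_n^{-2}\|h_n\|_{L^p}^2$ (for $X=L^p$) or $a_n=\ld_n^{-2-2\dl}$ (for $X=\H^{-\dl}$). In each case $a_n\sim n^{-1-\eps}$, so $\sum_{n>N}a_n\les N^{-\eps}$, giving the polynomial rate with $\gamma=p\eps/2>0$. This in particular shows $(Y_N(1))$ is Cauchy in $L^p(\Omega;X)$; its limit must coincide with $Y(1)$ (which already converges in $\H^{-s}$), so $Y(1)\in X$ and the convergence holds in $L^p(\Omega;X)$.

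The only genuinely delicate point is the final step of (i) and (iii): verifying that $S_N$ stays bounded exactly on the stated range of $p$. This hinges on applying the sharp eigenfunction bounds in the correct regime of $p$ --- in particular, the large-$p$ regime $p>\frac{2d}{d-1}$ in Lemma \ref{LEM:eig2} is the one that dictates the endpoint $\frac{2d}{d-2}$ when $d\ge 3$ --- and the Minkowski step is precisely what renders these termwise estimates usable. Everything else is a routine application of Gaussian hypercontractivity.
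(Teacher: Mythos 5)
Your proposal is correct and follows essentially the same route as the paper: pointwise Khintchine (Gaussian moment bounds) combined with Minkowski's inequality to reduce everything to the series $\sum_n \ld_n^{-2}\|h_n\|_{L^p}^2$ (resp. $\sum_n \ld_n^{-2-2\dl}$), then the eigenfunction bounds of Lemmas \ref{LEM:eig1}--\ref{LEM:eig2} to verify summability exactly on the stated range, with the tail of the same series giving the Cauchy property and polynomial rate in (iii). The only (cosmetic) difference is in (ii), where you apply Minkowski in $L^{p/2}_\omega$ to the coefficient expansion while the paper swaps $L^p_\omega$ and $L^2_x$ before applying Khintchine; the two computations are equivalent.
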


\begin{proof}
We start with proving (i).
Recall that $\{B_n(1)\}$ is a sequence of independent, complex, normalized Gaussian random variables.
Therefore, by \eqref{Yt} and Lemma \ref{LEM:Khi} we have
\begin{align*} 
\E [\| Y_N (1) \|_{L^p(\R^d)}^p]  & =  \int_{\R^d} \E \bigg[ \Big| \sum_{n=0}^N \frac{B_n(1) h_n(x)}{\ld_n} \Big|^p \bigg] dx\\
& \les  \int_{\R^d} \bigg(  \sum_{n=0}^N \frac{h_n^2 (x)}{\ld_{n}^2}    \bigg)^{\frac{p}2} dx.\\
\intertext{Since $p > 2$, we use Minkowski inequality to deduce that}
& \les  \bigg(  \sum_{n=0}^N \frac{ \|h_n (x) \|_{L^p (\R^d)}^2 }{\ld_{n}^2}    \bigg)^{\frac{p}2}.  \\
\intertext{We then insert Lemma \ref{LEM:eig1} and Lemma \ref{LEM:eig2} to get}
& \les \bigg(  \sum_{n=0}^\infty \frac{1}{\jb{n}^{1 + \beta(d,p)}}    \bigg)^{\frac{p}2},
\end{align*}

\noi
where 
\begin{align*}
\beta(d,p)=\begin{cases}
\frac13 -\frac2{3p},&~~\text{ if }d=1\text{ and }2\le p\le 4, \\
\frac16,&~~\text{ if }d=1\text{ and }p\ge 4,\\
d(\frac12-\frac1p),&~~\text{ if }d\ge 2\text{ and }2\le p<\frac{2d}{d-1},\\
\frac12-,&~~\text{ if }d\ge 2\text{ and }p=\frac{2d}{d-1},\\
1-d(\frac12-\frac1p),&~~\text{ if }d\ge 2\text{ and }p>\frac{2d}{d-1}.
\end{cases}
\end{align*}
In particular note that $\beta(d,p)>0$ if and only if $p > 2$ when $d=1,2$; or $2 < p < \frac{2d}{d-2}$ when $d\ge 3$.
Thus we finish the proof of (i).

Then, we turn to the proof of (ii).
We only consider the case when $p \ge 2$,
as the case of $p < 2$ follows from H\"older inequality together with the case of $p=2$. 
Similar computation as in (i) with Lemma \ref{LEM:Khi} yields
\begin{align*} 
    \E [\| Y_N (1) \|_{\mathcal H^{-\delta}(\R^d)}^p] & =  \E \bigg[ \Big\| \sum_{n=0}^N \frac{B_n(1) h_n(x)}{\ld_n^{1+\dl}} \Big\|_{L^2(\R^d)}^p \bigg] \\
    & \les  \bigg\| \Big\| \sum_{n=0}^N \frac{B_n(1) h_n(x)}{\ld_n^{1+\dl}} \Big\|_{L^p(\O)} \bigg\|_{L^2(\R^d)}^p \\
    & \les  \bigg\|  \Big( \sum_{n=0}^N \frac{h_n^2(x)}{\ld_n^{2+2\dl}}   \Big)^{\frac12} \bigg\|_{L^2(\R^d)}^p \\
    & = \Big( \sum_{n=0}^\infty \frac{1}{\ld_n^{2+2\dl}}   \Big)^{\frac{p}2}  < \infty,
\end{align*}

\noi
where in the last step we used the fact that $\ld_n^2 \sim 1+ n$ and $\dl>0$. Thus we finish the proof of (ii).

Now we move to the proof of (iii). Let $M > N$, and note that 
$$ Y_M(1) - Y_N(1) = \sum_{n=N+1}^M \frac{B_n(1) h_n(x)}{\lambda_n}.$$
Therefore, proceeding as in the proof of (i)-(ii) respectively, we obtain that 
\begin{equation*}
\E[\|Y_M(1) - Y_N(1)\|_X^p] \les_p 
\begin{cases}
\displaystyle \bigg(  \sum_{n=N+1}^\infty \frac{1}{\jb{n}^{1 + \beta(d,p)}}    \bigg)^{\frac{p}2} \les N^{-\frac{p\beta(d,p)}2} & \text{ in case }X=L^p,\\
\displaystyle \Big( \sum_{n=N+1}^\infty \frac{1}{\ld_n^{2+2\dl}}   \Big)^{\frac{p}2} \les N^{-p\dl} & \text{ in case }X=\H^{-\dl}.
\end{cases}
\end{equation*}
This shows that $Y_N$ is Cauchy in $L^p(\Omega; X)$. We complete the proof by taking the limit in the estimate as $M \to \infty$.
\end{proof}

To state the general version of the Wiener chaos estimate,
we need to recall some basic definitions from stochastic analysis; see \cite{Boga} for instance.
Let $(H,B,\mu)$ be an abstract Wiener space,
where $\mu$ is a Gaussian measure on a separable Banach space $B$ with $H \subset B$ as its Cameron-Martin space.
Let $\{e_j\}_{j\in \mathbb N}$ be a complete orthonormal system of $H^* = H$ such that $\{e_j\}_{j\in \mathbb N} \subset B^*$.
We define a polynomial chaos of order $k$ of the form
\[
\prod_{j=0}^\infty H_{k_j} (\jb{x,e_j}), 
\]

\noi
where $x \in B$, $k_j \neq 0$ for only finitely many $j$'s with $k = \sum_{j=0}^\infty k_j$,
$H_{k_j}$ is the Hermite polynomial of degree $k_j$,
and $\jb{\cdot,\cdot}$ denotes the $B$-$B^*$ duality pairing.
We then define $\mathcal H_k$ as the closure of polynomial chaoses of order $k$ under $L^2(B,\mu)$.
The elements in $\mathcal H_k$ are called homogeneous Wiener chaoses of order $k$. We also write
\[
\mathcal H_{\le k} = \bigoplus_{j=0}^k \mathcal H_j
\]

\noi
for $k\in \mathbb N$.
Then we have the following Wiener chaos estimate,
\begin{lemma}
[Wiener chaos estimate]
\label{LEM:WCE}
Let $k\in \mathbb N$.
Then, we have
\[
\|X\|_{L^p(\O)} \le (p-1)^{\frac{k}2} \|X\|_{L^2(\O)}
\]

\noi
for any $p \ge 2$ and any $X \in \mathcal H_{\le k}$.
\end{lemma}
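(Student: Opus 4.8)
The plan is to derive this estimate from Nelson's hypercontractivity theorem for the Ornstein--Uhlenbeck (second quantization) semigroup associated with the abstract Wiener space $(H,B,\mu)$. Recall that this semigroup $\{T_t\}_{t\ge 0}$ acts on $L^2(B,\mu)$ and is diagonalized by the Wiener chaos decomposition: for each homogeneous chaos $X_j\in\mathcal H_j$ one has $T_t X_j = e^{-jt} X_j$. Nelson's theorem asserts that $T_t$ is a contraction from $L^2(B,\mu)$ to $L^p(B,\mu)$ precisely when $e^{2t}\ge p-1$. This is the single deep input, and the remainder of the argument is an elementary manipulation of the chaos expansion that accounts for the presence of lower-order chaoses in $\mathcal H_{\le k}$.

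First I would fix $p\ge 2$ and choose $t\ge 0$ so that $e^{2t}=p-1$ (note $p-1\ge 1$, so such a $t\ge 0$ exists). Given $X\in\mathcal H_{\le k}$, decompose it along the orthogonal chaos spaces as $X=\sum_{j=0}^k X_j$ with $X_j\in\mathcal H_j$, and introduce the reweighted auxiliary element
\[
Z=\sum_{j=0}^k e^{jt}X_j .
\]
By the eigen-relation above, $T_t Z=\sum_{j=0}^k e^{jt}e^{-jt}X_j=X$. Applying Nelson's theorem with this choice of $t$ gives
\[
\|X\|_{L^p(B,\mu)}=\|T_t Z\|_{L^p(B,\mu)}\le \|Z\|_{L^2(B,\mu)} .
\]
It then remains to bound $\|Z\|_{L^2}$. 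Using the orthogonality of the chaoses $\{\mathcal H_j\}$ in $L^2(B,\mu)$ together with $e^{2t}=p-1$, one computes
\[
\|Z\|_{L^2(B,\mu)}^2=\sum_{j=0}^k e^{2jt}\|X_j\|_{L^2(B,\mu)}^2=\sum_{j=0}^k (p-1)^j\|X_j\|_{L^2(B,\mu)}^2 .
\]
Since $p-1\ge 1$ we have $(p-1)^j\le (p-1)^k$ for every $0\le j\le k$, so the right-hand side is at most $(p-1)^k\sum_{j=0}^k\|X_j\|_{L^2}^2=(p-1)^k\|X\|_{L^2}^2$, again by orthogonality. Combining the two displays yields $\|X\|_{L^p}\le(p-1)^{k/2}\|X\|_{L^2}$, as claimed.

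The only genuinely nontrivial ingredient is Nelson's hypercontractivity estimate itself; everything else reduces to the spectral action of $T_t$ on the chaos decomposition and the elementary inequality $(p-1)^j\le(p-1)^k$. I would therefore either cite hypercontractivity from a standard reference such as \cite{Boga}, or, if a self-contained treatment is preferred, first establish the sharp one-dimensional Gaussian two-point hypercontractivity inequality and then tensorize and pass to the limit over finite-dimensional spectral projections. The main subtlety worth emphasizing is the passage from a single homogeneous $\mathcal H_k$ to the full space $\mathcal H_{\le k}$: it is handled precisely by the reweighting $Z=\sum_j e^{jt}X_j$, which is what allows the clean constant $(p-1)^{k/2}$ to survive the presence of the lower-order chaoses rather than degrading to a larger constant one would obtain from a naive triangle-inequality splitting.
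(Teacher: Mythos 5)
Your proof is correct and takes essentially the same approach as the paper, which proves this lemma simply by invoking the hypercontractivity of the Ornstein--Uhlenbeck semigroup (citing \cite[Theorem I.22]{Simon}); your argument merely spells out the standard deduction, including the reweighting $Z=\sum_j e^{jt}X_j$ that handles $\mathcal H_{\le k}$ rather than a single homogeneous chaos. Nothing further is needed.
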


It is easy to see that the Khintchine inequality Lemma \ref{LEM:Khi} follows from the Wiener chaos estimate Lemma \ref{LEM:WCE} with $k=1$.
The proof of Lemma \ref{LEM:WCE} follows from the hypercontractivity of the Ornstein-Uhlenbeck semigroup. See \cite[Theorem I.22]{Simon} for more details.

Recall the definition of the Wick power $\wick{|Y_N (1)|^2}$ defined in \eqref{Wick},
which are homogeneous Wiener chaos of order $2$.
Therefore, we have the following corollary of Lemma \ref{LEM:WCE},
\begin{corollary}
\label{COR:WCE}
Let $p \ge 1$, we have
\[
\Big\|\int_{\R^d} \wick{|Y_N (1)|^2} dx \Big\|_{L^p(\O)} \les_p 1 ,
\]

\noi
where the constant only depends on $p$. Moreover, the sequence $\int_{\R^d} \wick{|Y_N (1)|^2}dx$ is Cauchy in $L^p(\O)$, and  
$$ 
\Big\|\int_{\R^d} \wick{|Y (1)|^2} dx - \int_{\R^d} \wick{|Y_N (1)|^2} dx \Big\|_{L^p(\O)} \les_p N^{-\frac12}.
$$
\end{corollary}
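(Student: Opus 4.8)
The plan is to reduce the $L^p(\O)$ bounds to elementary $L^2(\O)$ computations by means of the Wiener chaos estimate, and then to exploit the orthonormality of the eigenbasis $\{h_n\}$ to collapse the resulting spatial integral into a sum of independent random variables with summable variances.

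First I would observe that, since $\wick{|Y_N(1)|^2}$ is a homogeneous Wiener chaos of order $2$ and integration in $x$ is a linear operation, the random variable $\int_{\R^d}\wick{|Y_N(1)|^2}\,dx$ belongs to $\mathcal{H}_{\le 2}$. Hence Lemma \ref{LEM:WCE} applied with $k=2$ gives
\[
\Big\|\int_{\R^d}\wick{|Y_N(1)|^2}\,dx\Big\|_{L^p(\O)} \le (p-1)\Big\|\int_{\R^d}\wick{|Y_N(1)|^2}\,dx\Big\|_{L^2(\O)},
\]
so that it suffices to bound the $L^2(\O)$ norm uniformly in $N$.

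To compute the $L^2$ norm, I would expand $|Y_N(1)|^2$ using \eqref{Yt}, subtract the variance as in \eqref{Wick}--\eqref{variance}, and integrate in $x$. The crucial simplification is that the off-diagonal contributions $\int_{\R^d} h_m(x)h_n(x)\,dx$ vanish for $m\neq n$ by orthonormality, while $\int_{\R^d}h_n^2\,dx = 1$, so that
\[
\int_{\R^d}\wick{|Y_N(1)|^2}\,dx = \sum_{n=0}^N \frac{|B_n(1)|^2 - \E|B_n(1)|^2}{\ld_n^2}.
\]
Since the $B_n(1)$ are independent and each summand is mean zero, the $L^2(\O)$ norm squared equals $\sum_{n=0}^N \ld_n^{-4}\,\E\big[(|B_n(1)|^2 - \E|B_n(1)|^2)^2\big]$, which is a fixed constant multiple of $\sum_{n=0}^N \ld_n^{-4}$. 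Recalling $\ld_n^2 \sim n$ from \eqref{ld}, this is bounded by $\sum_{n\ge 0}\jb{n}^{-2} < \infty$ uniformly in $N$, which yields the first claim.

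For the Cauchy estimate I would run the same computation on the difference: for $M > N$,
\[
\int_{\R^d}\wick{|Y_M(1)|^2}\,dx - \int_{\R^d}\wick{|Y_N(1)|^2}\,dx = \sum_{n=N+1}^M \frac{|B_n(1)|^2 - \E|B_n(1)|^2}{\ld_n^2},
\]
whose $L^2(\O)$ norm squared is $\les \sum_{n > N}\ld_n^{-4} \sim \sum_{n>N} n^{-2} \les N^{-1}$. Applying Lemma \ref{LEM:WCE} once more upgrades this to the $L^p$ bound $\les_p N^{-\frac12}$, and letting $M\to\infty$ gives both the Cauchy property and the stated convergence rate. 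The computation is essentially routine; the only point genuinely requiring care — and the crux of the argument — is the collapse of the double frequency sum to the diagonal via $\int_{\R^d} h_m h_n\,dx = \delta_{mn}$, which is precisely what turns the $L^2$ norm into a convergent series, the summability coming from the $\ld_n^{-4}$ decay produced by the Wick subtraction.
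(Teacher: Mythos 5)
Your proof is correct and follows essentially the same route as the paper's: reduce to $L^2(\O)$ via the Wiener chaos estimate, collapse the double sum to the diagonal by orthonormality of $\{h_n\}$ to get $\sum_{n\le N}\ld_n^{-2}(|B_n(1)|^2-1)$, and use independence plus $\ld_n^{-4}$ summability, with the identical tail computation for the Cauchy/convergence estimate. The only caveat is that Lemma \ref{LEM:WCE} is stated for $p\ge 2$, so for $1\le p<2$ you should invoke H\"older (i.e.\ monotonicity of $L^p$ norms on a probability space) rather than the chaos estimate with factor $(p-1)$ — which is exactly how the paper handles that range.
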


\begin{proof}
By using H\"older inequality for $p < 2$ and Wiener chaos estimate Lemma \ref{LEM:WCE} for $p \ge 2$, we have
\[
\Big\|\int_{\R^d} \wick{|Y_N (1)|^2} dx \Big\|_{L^p(\O)}  \les_p  \Big\|\int_{\R^d} \wick{|Y_N (1)|^2} dx \Big\|_{L^2(\O)} .
\]

\noi
Then from the definition of Wick order \eqref{Wick} and \eqref{variance},
it follows that
\begin{align*}
\int_{\R^d} \wick{|Y_N (1)|^2} dx & = \int_{\R^d} |Y_N (1)|^2 dx - \int_{\R^d} \s_N (x) dx\\
& = \sum_{n=0}^N \frac{|B_n(1)|^2 - 1}{\ld_n^2}.
\end{align*}

\noi
As the random variable $\{B_n(1)\}$ are normalized and independent, thus we have
\[
\E [(|B_{n}(1)|^2 - 1)^2] = \E [|B_{n}(1)|^4] - 2 [|B_{n}(1)|^2] + 1 = 2 
\]

\noi
and
\[
\E [(|B_{n_1}(1)|^2 - 1)(|B_{n_2}(1)|^2 - 1)] = 0
\]

\noi
for all $n_1 \neq n_2$.
Therefore, we have
\begin{align*}
    \E \bigg[ \Big|\int_{\R^d} & \wick{|Y_N (1)|^2} dx \Big|^2 \bigg] = \sum_{n=0}^N \frac{\E [(|B_n(1)|^2 - 1)^2]}{\ld_n^4}\\
    & = \sum_{n=0}^N \frac{2}{\ld_n^4} <  \sum_{n=0}^\infty \frac{2}{\ld_n^4} < \infty.
\end{align*}

We now move to the second part. Proceeding similarly, for $M > N$, we have that 
\begin{align*}
\Big\|\int_{\R^d} \wick{|Y_M (1)|^2} dx - \int_{\R^d} \wick{|Y_N (1)|^2} dx \Big\|_{L^p(\O)}^p & \les_p 
\Big(\sum_{n=N+1}^M \ld_n^{-4}\Big)^\frac p2 \les N^{-\frac p2},
\end{align*}
which shows that the sequence $\int_{\R^d} \wick{|Y_N (1)|^2}dx$ is Cauchy in $L^p(\O)$. We conclude the proof by taking a limit as $M \to \infty$.
\end{proof}
Finally, in the proof of Theorem \ref{THM:main}-(i) and Theorem \ref{THM:main2}-(ii), in order to show convergence in total variation of the indicator functions, we will need the following Lemma.
\begin{lemma}
Let $\wick{|u_N|^2}$ be the Wick power defined in \eqref{Wick}, and let $\wick{|u|^2}$ be its limit. Then, for every $K \in \R$, and for every $\eps > 0$,
\begin{equation}\label{boundary_estimateN}
\PP\bigg(\int_{\R^d} \wick{|u_N|^2}(x)\,dx \in [K - \eps, K+ \eps]\bigg) \les \eps,
\end{equation}
where the implicit constant is independent of $K \in \R$ and $\eps$. As a consequence, we obtain 
\begin{equation}\label{boundary_estimate}
\PP\bigg(\int_{\R^d}  \wick{|u|^2}(x)\,dx \in [K - \eps, K+ \eps]\bigg) \les \eps.
\end{equation}
\end{lemma}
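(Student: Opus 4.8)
The plan is to reduce the statement to an explicit anti-concentration estimate for a sum of independent exponential random variables, and then to exploit the convolution structure of that sum. Indeed, exactly as in the computation carried out in the proof of Corollary~\ref{COR:WCE}, under $\mu$ the random variable $\int_{\R^d}\wick{|u_N|^2}\,dx$ has the same law as
\[
X_N := \sum_{n=0}^N \frac{|g_n|^2-1}{\lambda_n^2},
\]
where $\{g_n\}_{n\ge 0}$ is a sequence of i.i.d.\ normalized complex Gaussians. The key observation is that each $|g_n|^2$ is, up to a fixed multiplicative constant, a $\chi^2_2$ (equivalently, an exponential) random variable, and hence carries an explicit bounded density.

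First I would isolate the lowest mode. Writing $X_N = Y_0 + R_N$ with $Y_0 := \lambda_0^{-2}(|g_0|^2-1)$ and $R_N := \sum_{n=1}^N \lambda_n^{-2}(|g_n|^2-1)$, the two summands are independent, and $Y_0$ is an affine image of an exponential variable, so a direct computation produces a density $f_{Y_0}$ with $\|f_{Y_0}\|_{L^\infty} =: M < \infty$. Conditioning on $R_N$ and using that the conditional density of $X_N$ is $f_{Y_0}(\,\cdot - R_N)$, we obtain $f_{X_N}(x) = \E\big[f_{Y_0}(x-R_N)\big] \le M$, so that $X_N$ has a density bounded by $M$ \emph{uniformly in} $N$. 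Integrating this bound over $[K-\eps,K+\eps]$ immediately yields
\[
\PP\Big(X_N \in [K-\eps,K+\eps]\Big) \le 2M\eps,
\]
which is \eqref{boundary_estimateN}, with a constant independent of both $K$ and $N$.

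To pass to the limit and obtain \eqref{boundary_estimate}, I would note that the series $R_\infty := \sum_{n\ge 1}\lambda_n^{-2}(|g_n|^2-1)$ converges in $L^2(\O)$ — since $\sum_n \lambda_n^{-4} < \infty$, cf.\ Corollary~\ref{COR:WCE} — and is independent of $g_0$, so that $\int_{\R^d}\wick{|u|^2}\,dx = Y_0 + R_\infty$. The same convolution bound then gives a density for this limit bounded by $M$, whence \eqref{boundary_estimate}. Alternatively, one may invoke the $L^2$-convergence of $X_N$ to $X$ from Corollary~\ref{COR:WCE}, which yields convergence in distribution, and transfer the uniform estimate via the portmanteau inequality applied to open neighborhoods of $[K-\eps,K+\eps]$.

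The argument is short, and the only point requiring care is the uniformity over all $N$ together with the limit $N=\infty$. One might be tempted to bound the density of $X_N$ through its characteristic function, $|\E e^{itX_N}| = \prod_{n=0}^N (1 + t^2\lambda_n^{-4})^{-1/2}$, which does decay like $e^{-c|t|}$ once $N$ is large and thus gives an integrable Fourier transform; however this route breaks down for small $N$, where the density is merely bounded but discontinuous. The convolution/smoothing argument above circumvents this issue entirely and is, moreover, insensitive to the precise normalization convention chosen for the $g_n$.
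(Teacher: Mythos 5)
Your proof is correct and takes essentially the same route as the paper: you split off the lowest mode $\lambda_0^{-2}(|g_0|^2-1)$, which has a bounded density, and use independence plus the convolution (conditioning) bound to conclude that $\int_{\R^d}\wick{|u_N|^2}\,dx$ has a density bounded in $L^\infty$ uniformly in $N$, exactly as in the paper's estimate $\|\sigma_0\ast\sigma_N\|_{L^\infty(\R)}\le\|\sigma_0\|_{L^\infty(\R)}\|\sigma_N\|_{L^1(\R)}$. Your passage to the limit (via $Y_0+R_\infty$ or portmanteau on open neighborhoods) is a correct, slightly more explicit justification of the step the paper leaves implicit.
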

\begin{proof}
By the $L^2$-orthogonality of the eigenfunctions $h_n$ and the definition \eqref{Wick} of $\wick{|u_N|^2}$, we have that 
$$ \int_{\R^d} \wick{|u_N|^2}(x) dx = \sum_{n=0}^N \frac{|g_n|^2}{\ld_n^2} -  \frac{1}{\ld_n^2} = \frac{|g_0|^2- 1}{d} + \bigg(\sum_{n=1}^N \frac{|g_n|^2-1}{\ld_n^2}\bigg) =: A_0 + A_N. $$
Because of independence of the Gaussians $\{g_n\}_{n\in\N}$, the random variables $A_0$ and $A_N$ are independent as well. Denoting by $d\lambda$ the Lebesgue measure on $\R$, let $\sigma_0 d\lambda$ be the law of $A_0$ and $\sigma_Nd\lambda$ be the law of $A_N$. Since 
$$ \sigma_0 d\lambda = \Law\Big(\frac{|g_0|^2- 1}{d}\Big),$$
and $g_0$ is a normal Gaussian, then $\sigma_0 \in L^\infty(\R)$. Therefore, 
\begin{align*}
\bigg\|\frac{d\Law\Big(  \int_{\R^d} \wick{|u_N|^2}(x) dx\Big)}{d\lambda}\bigg\|_{L^\infty(\R)} &= \Big\|\frac{d\Law( A_0 + A_N)}{d\lambda}\Big\|_{L^\infty(\R)} \\
&= \| \sigma_0 \ast \sigma_N\|_{L^\infty(\R)} \\
&\le \| \sigma_0 \|_{L^\infty(\R)} \|\sigma_N\|_{L^1(\R)} \\
&= \| \sigma_0 \|_{L^\infty(\R)} \\
&\les 1,
\end{align*}
from which \eqref{boundary_estimateN} and hence \eqref{boundary_estimate} follow.
\end{proof}

\section{Normalizability}
\label{SEC:nor}

In this section, 
we show 
the integrability part, i.e. Theorem \ref{THM:main} (i) and Theorem \ref{THM:main2} (i).
Namely,
we prove the boundedness of 
$\mathcal Z_{K,N}$ in \eqref{partition} with $2 < p < 2+ \frac{4}d$ for all $K>0$.
We first start by showing \eqref{uniint_p} and \eqref{uniint_d}. We will then complete the proof of Theorem \ref{THM:main} (i) and Theorem \ref{THM:main2} (i) in the end of this section. The two statements \eqref{uniint_p} and \eqref{uniint_d} correspond to the following bound:
\begin{align}
\label{var1d}
\sup_N \E_{\mu}  \Big[ \exp (R_p(u_N)) \cdot  \ind_{\{ | \int_{\R^d} :|u_N (x)|^2: dx| \le K\}}\Big] < \infty, 
\end{align}

\noi
where $u_N = \P_{\le N} u$ and $R_p(u_N)$ is the potential energy denoted by
\begin{align}
\label{Rp}
R_p (u_N) : = {\frac{r}{p} \int_{\R^d} |u_N (x)|^p\,\mathd  x}.
\end{align}

\noi
Observing that
\begin{align}
\label{var2}
\E_{\mu}  \Big[ \exp (R_p(u_N)) \cdot  \ind_{\{ | \int_{\R^d} :|u_N (x)|^2: dx| \le K\}} \Big]
\le \E_{\mu}  \bigg[ \exp \Big(R_p(u_N) \cdot  \ind_{\{ | \int_{\R^d} :|u_N (x)|^2: dx| \le K\}} \Big)\bigg],
\end{align}

\noi
then the bound \eqref{var1d} follows once we have
\begin{align}
\label{var3}
\begin{split}
\sup_N   \E_{\mu}  \bigg[ \exp \Big(R_p(u_N) \cdot  \ind_{\{ | \int_{\R^d} :|u_N (x)|^2: dx| \le K\}} \Big)\bigg]  < \infty.
 \end{split}
\end{align}

\noi
From \eqref{law} and the Bou\'e-Dupuis variational formula Lemma \ref{LEM:var},
it follows that
\begin{align}
\label{var4}
\begin{split}
& -  \log  \E_{\mu}  \Big[ \exp \Big(R_p(u_N) \cdot  \ind_{\{ | \int_{\R^d} :|u_N |^2: dx| \le K\}}  \Big)\Big]\\
& \hphantom{X}  = -  \log  \E  \Big[ \exp \Big( R_p(Y_N  ) \cdot  \ind_{\{ | \int_{\R^d} :|Y_N |^2: dx| \le K\}}  \Big)\Big] \\
&\hphantom{X} = \inf_{\theta \in \mathbb{H}_a} \E \Big[ - R_p \big( Y_N + \P_{\le N} I(\theta) (1) \big) 
\cdot  \ind_{\{ | \int_{\R^d} :| Y_N  + \P_{\le N} I(\theta) (1) |^2: dx|  \le K \}} \\
& \hphantom{XXXXXXX} + \frac12 \int_0^1 
 \| \theta (t) \|_{L^2_x}^2 dt  \Big], 
\end{split}
\end{align}

\noi
where $Y_N = \P_{\le N} Y (1)$ and $Y(1)$ is given in \eqref{Yt}.
Here, $\E_{\mu}$ and $\E$ denote expectations with respect to the Gaussian field $\mu$ and the underlying probability measure $\mathbb P$ respectively.
In what follows, we will denote
\begin{align}  
\Theta_N = \P_{\le N} I(\theta)(1)
\label{nots}
\end{align}

\noi 
for simplicity.
In the rest of this section, 
we show that the right hand side of \eqref{var4} 
has a finite lower bound.

\medskip

\begin{proof}[Proof of \eqref{var1d}]
In this case, we prove \eqref{var3} 
with a Wick-ordered $L^2$-cutoff of any finite size $K$.
By duality and Young's inequality, 
we have
\begin{align}
\begin{split}
\Big| \int_{\R^d} & \wick{| Y_N + \Theta_N|^2} dx \Big|  \\
& = \Big| \int_{\R^d} \wick{|Y_N|^2}  dx  + 2 \int_{\R^d}  Y_N \cj{\Theta_N}  dx + \int_{\R^d} |\Theta_N|^2 dx \Big| \\
& \ge -\, \Big| \int_{\R^d} \wick{|Y_N|^2}  dx \Big|  - 2 \|Y_N \|_{\H^{-\dl}} \| \Theta_N \|_{\H^\dl} + \int_{\R^d}  |\Theta_N|^2 dx\\
& \ge  -\Big| \int_{\R^d} \wick{|Y_N|^2}  dx \Big|  - C_\eps \| Y_N \|_{\H^{-\dl} ({\R^d})}^{p_1} -  \eps \| \Theta_N \|_{\H^\dl ({\R^d})}^{q_1} + \int_{\R^d} |\Theta_N|^2  dx,
\end{split}
\label{cutoff0}
\end{align}

\noi
where $\dl > 0$ and $p_1, q_1 > 1$ such that
\[
\frac1{p_1} + \frac1{q_1} = 1.
\]

\noi
Furthermore, by interpolation, we have
\[
 \| \Theta_N \|_{\H^\dl ({\R^d})}
\le  C  \| \Theta_N \|_{L^2 ({\R^d})}^{(1-\dl)p_2} +  C \| \Theta_N \|_{\H^1 ({\R^d})}^{\dl q_2},
\]

\noi
where we choose $\dl \in (0,1)$ and $p_2, q_2 > 1$ such that
\[
\frac1{p_2} + \frac1{q_2} = 1.
\]

\noi
We may then choose $ \dl \ll1$, $q_1 = 1+$,
and $p_2 = 2+$, such that $q_1(1-\dl)p_2 = 2$.
It also follows that 
\begin{align}
\dl q_1 q_2 \ll 1
\label{smalldelta} 
\end{align}

\noi
as $q_2 = 2-$. 
Therefore, with the parameters chosen as above we have
\begin{align}
\label{Lqbound}
\| \Theta_N \|_{\H^\dl ({\R^d})}^{q_1} 
\le   C \| \Theta_N \|_{L^2 ({\R^d})}^{2} +  C \| \Theta_N \|_{\H^1 ({\R^d})}^{\dl q_1 q_2}.
\end{align}

\noi
We remark here that the constant $C$ is independent of $N$ and $\eps$, 
and may vary from line to line.
By choosing $\eps C < \frac12$, 
from \eqref{cutoff0} and \eqref{Lqbound}, 
we conclude that
%
\begin{align}
\begin{split}
\bigg\{  \Big| \int_{\R^d} & :| Y_N + \Theta_N|^2: dx \Big|   \le K \bigg\}  \\
& = \bigg\{ \Big| \int_{\R^d} : |Y_N|^2 \!:  dx  + 2 \int_{\R^d}  Y_N \cj{\Theta_N}  dx + \int_{\R^d} |\Theta_N|^2 dx \Big|  \le K \bigg\} \\
& \subset \bigg\{  \|\Theta_N \|_{L^2(\R^d)}^2 \le   K + \Big| \int_{\R^d} : |Y_N|^2 \!:  dx   \Big| + C_\eps \| Y_N \|_{\H^{-\dl} (\R^d)}^{p_1} \\
& \hphantom{XXXX}  + \frac12 \| \Theta_N \|_{L^2 (\R^d)}^{2} + \eps C  \| \Theta_N \|_{\H^1 (\R^d)}^{\dl q_1 q_2} \bigg\}\\
& = \bigg\{  \|\Theta_N \|_{L^2(\R^d)}^2 \le  2 K + 2 \Big| \int_{\R^d} : |Y_N|^2 \!:  dx   \Big| + 2 C_\eps \| Y_N \|_{\H^{-\dl} (\R^d)}^{p_1} \\
& \hphantom{XXXX}   + 2 \eps C \| \Theta_N \|_{\H^1 (\R^d)}^{\dl q_1 q_2} \bigg\}\\
& = : \O_K.
\end{split}
\label{cutoff2}
\end{align}

\noi
We then recall an elementary inequality, 
which is a direct consequence of the mean value theorem and 
 the Young's inequality.
Given $p >2$ and $\eps >0$, there exists $C_\eps$ such that
\begin{align}
\label{Young}
|z_1 + z_2|^p \le (1+ \eps)|z_1|^p + C_\eps |z_2|^p  
\end{align}

\noi
holds uniformly in $z_1,z_2 \in \mathbb C$.
Here $C_\eps$, 
which may differ from line to line,
denotes a constant depending on $\eps$ but not $N$.
We conclude from \eqref{Rp}, \eqref{Young}, \eqref{cutoff2},
\eqref{GNSdisp1}, and \eqref{GNSdisp2}, that
\begin{align}
& R_p \big(Y_N + \Theta_N\big) 
\cdot  \ind_{\{ | \int_{\R^d} : |Y_N + \Theta_N|^2 : dx | \le K  \}} \notag\\
&  \le (1+ \eps) R_p \big(\Theta_N\big) 
\cdot  \ind_{\{ | \int_{\R^d} : |Y_N + \Theta_N|^2 : dx | \le K  \}} + C_\eps R_p(Y_N)  \notag\\
& \le (1+ \eps) R_p \big( \Theta_N \big) 
\cdot  \ind_{\O_K}  + C_\eps R_p(Y_N) \notag\\
& \le \frac{1+\eps}p C_{\textup{GNS}} \| \Theta_N\|^{\frac{(p-2)d}{2}}_{\H^1 (\R^d)}\|\Theta_N\|^{2+\frac{2-d}2 (p-2)}_{L^2(\R^d)} \cdot \ind_{\O_K}   + C_\eps R_p(Y_N) \notag\\
\intertext{where $C_{\textup{GNS}}$ is the implicit constant in \eqref{GNSdisp1} or \eqref{GNSdisp2} (depending on the dimension) and the set $\O_K$ is given in \eqref{cutoff2}. 
Noting that $\frac{(p-2)d}{2} < 2$ when $p<  p^*(d) = 2+ \frac{4}d $, 
we apply Young's inequality to continue with}
& \le  C \| \Theta_N  \|_{L^2({\R^d})}^{2+\frac{4(p-2)}{4+2d-dp}} \cdot \ind_{\O_K}
+ \frac14 \| \Theta_N  \|_{\mathcal H^1 ({\R^d})}^2 
 + C_\eps R_p(Y_N).
\label{var5}
\end{align}

\noi
Then from \eqref{cutoff2}, interpolation, and the Young's inequality, 
we have
\begin{align}
\begin{split}
 &  \| \Theta_N \|_{L^2({\R^d})}^{2+\frac{4(p-2)}{4+2d-dp}} \cdot \ind_{\O_K}  \\
& \les C  K^{1+\frac{2(p-2)}{4+2d-dp}} + C  \Big| \int_{{\R^d}} \wick{|Y_N|^2}  dx   \Big|^{1+\frac{2(p-2)}{4+2d-dp}}  +   
CC_\eps  \| Y_N \|_{\H^{-\dl} ({\R^d})}^{p_1 \big(1+\frac{2(p-2)}{4+2d-dp} \big)} \\
& \hphantom{XXXX}  + \big( \eps   C  \| \Theta_N \|_{\H^1 ({\R^d})}^{\dl q_1 q_2}  \big)^{1+\frac{2(p-2)}{4+2d-dp}}    \\
& \le C_K +  C   \Big| \int_{{\R^d}} \wick{|Y_N|^2}  dx   \Big|^{1+\frac{2(p-2)}{4+2d-dp}}   + CC_\eps   \| Y_N \|_{\H^{-\dl} ({\R^d})}^{p_1 \big(1+\frac{2(p-2)}{4+2d-dp} \big)} 
 +   \eps C  \| \Theta_N \|_{\H^1 ({\R^d})}^{2}  ,
\end{split}
\label{cutoff3}
\end{align}

\noi
where $\dl, \eps \ll 1$ and $q_1, q_2 \in \R$ are real numbers such that \eqref{smalldelta} holds.
Here we remark that the constants $C,C_K$ in \eqref{cutoff3}, which may differ from line to line, are chosen to be independent of $\eps$ (but the latter depends on $K$).
From Corollary \ref{COR:intp}, 
we have
\begin{align}
\label{bound1}
\begin{split}
 \sup_{N}  \E \Big[ \Big| \int_{{\R^d}} \wick{|Y_N|^2}  dx   \Big|^{1+\frac{2(p-2)}{4+2d-dp}} \Big] + \sup_N  \E \Big[  \| Y_N \|_{\H^{-\dl} ({\R^d})}^{p_1 \big(1+\frac{2(p-2)}{4+2d-dp} \big)} \Big]  < \infty,
 \end{split}
\end{align}

\noi
provided $2 < p < p^*(d) = 2 + \frac4d$.
By collecting \eqref{var4}, \eqref{var5}, \eqref{cutoff3}, and \eqref{bound1}, and Lemma \ref{LEM:bounds},
we arrive at
 \begin{align}
\label{var70}
\begin{split}
- & \log  \E_{\mu}  \bigg[ \exp \Big(R_p(u_N) \cdot  \ind_{\{ | \int_{\R^d} :|u_N (x)|^2: dx| \le K\}} \Big) \bigg] \\
& \ge \inf_{\theta \in \mathbb{H}_a} \E \bigg[ - C \| \Theta_N \|_{L^2({\R^d})}^{2+\frac{4(p-2)}{4+2d-dp}} \cdot \ind_{\O_K}
  - C  R_p(Y_N) \\
& \hphantom{XXXXXXXX} 
- \frac14 \| \Theta_N \|_{\mathcal H^1 ({\R^d})}^2 + \frac12 \int_0^1 
 \| \theta (t) \|_{L^2_x}^2 dt \bigg]\\
& \ge \inf_{\theta \in \mathbb{H}_a}  \E \bigg[  - C    \| \Theta_N \|_{L^2({\R^d})}^{2+\frac{4(p-2)}{4+2d-dp}} \cdot \ind_{\O_K} + \frac14  \int_0^1 
  \| \theta (t) \|_{L^2_x}^2 dt 
  - C R_p(Y_N) \bigg]\\
& \ge \inf_{\theta \in \mathbb{H}_a}  \E \bigg[ - C_K - C   \Big| \int_{{\R^d}} : |Y_N|^2 \!:  dx   \Big|^{1+\frac{2(p-2)}{4+2d-dp}} - CC_\eps   \| Y_N \|_{\H^{-\dl} ({\R^d})}^{p_1 \big(1+\frac{2(p-2)}{4+2d-dp} \big)} \\
& \hphantom{XXXXXXXX} 
+ \Big( \frac14 - C\eps \Big) \int_0^1 
  \| \theta (t) \|_{L^2_x}^2 dt 
  - C R_p(Y_N)  \bigg].
\end{split}
\end{align}

\noi
Then, by choosing $\eps >0$ sufficiently small such that  $C \eps  < \frac14$, from \eqref{bound1} and \eqref{var70}, we obtain 
\begin{align}
\begin{split}
    - & \log  \E_{\mu}  \bigg[ \exp \Big(R_p(u_N) \cdot  \ind_{\{ | \int_{\R^d} :|u_N (x)|^2: dx| \le K\}} \Big) \bigg] \\
 & \ge  - C_K   - C  \E\big[ R_p(Y_N) \big]  
- C   \E \bigg[ \Big| \int_{{\R^d}} \wick{|Y_N|^2}  dx   \Big|^{1+\frac{2(p-2)}{4+2d-dp}} \bigg]  \\
& \hphantom{XXXXXX} - CC_\eps   \E \bigg[  \| Y_N \|_{\H^{-\dl} ({\R^d})}^{p_1 \big(1+\frac{2(p-2)}{4+2d-dp} \big)} \bigg] \\
& \ge   - C_K   - C \E\big[ R_p(Y_N) \big] .
    \end{split}
    \label{var71}
\end{align}

\noi
We note that $p^*(d) < \infty$ for $d = 1,2$, and $p^*(d) < \frac{2d}{d-2}$ when $d \ge 3$.
Therefore, Corollary \ref{COR:intp} can be applied for $p < p^*(d)$. 
By applying Corollary \ref{COR:intp} with $p < p^*(d)$, we conclude that \eqref{var70} is uniformly bounded from below.
Therefore, we finish the proof of \eqref{var3} and so of \eqref{uniint_p} and \eqref{uniint_d}.
\end{proof}

We now complete the proof of Theorem \ref{THM:main} - (i) and Theorem \ref{THM:main2} - (i).
\begin{proof}[Proof of Theorem \ref{THM:main} - {\rm{(i)}} and Theorem \ref{THM:main2} - {\rm{(i)}}]
First of all, notice that the quantities 
$$ \ind_{ \{ | \int_{\R^d} : | u_N (x)|^2 : dx | \le K\}}  e^{\frac1 p{\| u_N \|_{L^p (\R^d)}^p}} $$
are equiintegrable in $L^r(\mu)$. This is a direct consequence of \eqref{uniint_p} and \eqref{uniint_d} applied to some $\wt r > r$ (and the fact that $\mu$ is a probability measure). 
Therefore, by Vitali's convergence theorem, we just need to show that 
\begin{gather}
\lim_{N \to \infty} e^{\frac1 p{\| u_N \|_{L^p (\R^d)}^p}} = e^{\frac1 p{\| u \|_{L^p (\R^d)}^p}}, \label{lim_meas1}\\
\lim_{N \to \infty} \ind_{ \{ | \int_{\R^d} : | u_N (x)|^2 : dx | \le K\}} =  \ind_{ \{ | \int_{\R^d} : | u (x)|^2 : dx | \le K\}}, \label{lim_meas2}
\end{gather}
where the limits are intended as limits in probability. We have that 
\begin{itemize}
\item \eqref{lim_meas1} is a direct consequence of the convergence of Corollary \ref{COR:intp}, (iii) with $Y_N(1), Y(1)$ replaced by $u_N, u$ respectively.
\item \eqref{lim_meas2} follows from the convergence of $\int_{\R^d} \wick{| u_N (x)|^2} dx$ to $\int_{\R^d} \wick{| u(x)|^2} dx$ and \eqref{boundary_estimate}.
\end{itemize}

\end{proof}

\section{Non-normalizability}
\label{SEC:non}

In this section, we present the proof of  Theorem \ref{THM:main} - (ii) and Theorem \ref{THM:main2} - (ii).
Namely, we prove 
the divergence
\eqref{non_int} with $p\ge p^*(d) = 2 + \frac4d$
for any cut-off size $K>0$. Most of this section is dedicated to showing \eqref{non_int} and \eqref{non_int_d}, we will prove \eqref{non_int2} and \eqref{non_int_d2} at the end of the section.

In order to show \eqref{non_int} and \eqref{non_int_d}, it suffices to prove 
the following divergence:
given $p \ge p^*(d)$, for every $r > 0$,\footnote{Strictly speaking, we only need $r=1$ in order to show \eqref{non_int} and \eqref{non_int_d}. However, the general case $r>0$ will be helpful in showing \eqref{non_int2} and \eqref{non_int_d2}.}
\begin{align}
\lim_{N\to \infty}  \E_{\mu}  \Big[ \exp (R_p(u_N)) \cdot  \ind_{\{ | \int_{\R^d} :|u_N (x)|^2: dx| \le K\}}\Big] =  \infty,
\label{pax}
\end{align}

\noi
where $R_p(u)$ is as  in \eqref{Rp} and $u_N = \P_N u$.

First we notice that 
\begin{align}
\begin{split}
\E_\mu\Big[ & \exp\big(  R_p(u_N) \big) 
\cdot\ind_{\{ |\int_{{\R^d}} \, : |u_N|^2 :\, dx | \le K\}} \Big]\\
&\ge \E_\mu\Big[\exp\Big(  R_p(u_N)
\cdot \ind_{\{ |\int_{{\R^d}} \, : |u_N|^2 :\, dx | \le K\}}\Big)   \Big]
- 1.
\end{split}
\label{pax1}
\end{align}

\noi
Therefore, the  divergence \eqref{pax} follows once we prove
\begin{align}
\lim_{N\to \infty}  \E_\mu\Big[\exp\Big(  R_p(u_N)
\cdot \ind_{\{ |\int_{{\R^d}} \, : |u_N|^2 :\, dx | \le K\}}\Big)   \Big] =  \infty,
\label{pa0}
\end{align}

\noi
for $p \ge p^*(d)$.

By  the  Bou\'e-Dupuis variational formula 
(Lemma \ref{LEM:var}), we have
\begin{align}
\begin{split}
- \log & {\E_\mu\Big[\exp\Big(  R_p(u_N)
\cdot \ind_{\{ |\int_{{\R^d}} \, : |u_N|^2 :\, dx | \le K\}}\Big)   \Big]} \\
&= \inf_{\dr \in \mathbb H_a} \E\bigg[ -  R_p (Y_N + \Theta_N)  \ind_{\{ |\int_{{\R^d}} : | Y_N|^2: 
+ 2\Re (Y_N  \cj{\Theta_N} ) + |\Theta_N|^2 dx | \le K\}} \\
&\hphantom{XXXXX}
+ \frac 12 \int_0^1 \| \dr(t)\|_{L^2_x} ^2 dt \bigg],
\label{DPf}
\end{split}
\end{align}

\noi
where $Y_N$ and $\Theta_N$ are as in \eqref{nots}.
Here,  $\E_\mu$ and $\E$ denote expectations
with respect to the Gaussian field~$\mu$ in \eqref{mu}
and the underlying probability measure $\PP$, respectively.
In the following, we show that the right-hand side 
of \eqref{DPf} tends to $-\infty$ as $N  \to \infty$ as long as $p \ge p^*(d)$.
As in \cite{TW,OOT1,OOT2,OST}, 
the main idea is to construct a series of drifts $\dr \in \mathbb H_a$
such that 
$\Theta_N$ looks like ``$- Y_N + $ 
a perturbation'', where the perturbation terms are  approximations of a sequence of blow-up profiles,
which are bounded in $L^2$
but have large $L^p$-norm for $p > 2$.
In particular, when $p \ge p^*(d)$, the $L^p$-norm of the sequence of perturbations grow fast enough driving \eqref{DPf} to diverge as $N \to \infty$. 
The key observation is that the Wick-ordered $L^2$-cutoff $|\int_{{\R^d}} \wick{|u_N|^2}\, dx | \le K$ does not exclude such blow-up profiles for any given $K >0$.

\subsection{Blow-up profiles}
In this subsection, 
we first construct a profile which stays bounded in $L^2$ but grows in $L^p$ with $p > 2$.

Fix a large parameter $M \gg 1$.
Let $f: {\R^d} \to \R$ be a real-valued radial Schwartz function
with $\|f\|_{L^2 (\R^d)} = 1$
such that 
the Fourier transform $\ft f$ is a smooth function
supported  on $\big\{\frac 12 <  |\xi| \le 1 \big\}$.
Define a function $f_M$  on ${\R^d}$ by 
\begin{align}
f_M(x) =  M^{-\frac{d}2} \int_{\R^d} e^{ix\cdot \xi} \ft f(\tfrac{\xi}M) d\xi = M^{\frac{d}2} f(Mx), 
\label{fMdef} 
\end{align}

\noi
where $\ft f$ denotes the Fourier transform on ${\R^d}$ defined by 
\[ \ft f(\xi) = c \int_{{\R^d}} f(x) e^{-i\xi \cdot x} dx.\]

\noi
Then, a direct computation yields  the following lemma. 
See Lemma 5.12 in \cite{OOT1} and Lemma 3.3 in \cite{OST} for similar constructions on the torus $\T^d$. 

\begin{lemma}
\label{LEM:soliton}
Let $s \in \R$. Then, we have
\begin{align}
\int_{{\R^d}} f_M^2 dx &= 1 , \label{fM0} \\
\int_{{\R^d}} (\L^{\frac{s}2} f_M)^2 dx &\les M^{2s}, \label{fm2} \\
\int_{{\R^d}}  |f_M|^p  dx &\sim M^{\frac{pd}2-d} \label{fM1}
\end{align}

\noi
for any $p>0$, $M \gg 1$.
\end{lemma}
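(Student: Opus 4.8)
The plan is to read all three identities off the $L^2$-invariant dilation $f_M = M^{d/2} f(M\cdot)$, treating \eqref{fM0} and \eqref{fM1} as elementary changes of variables and concentrating on the Sobolev bound \eqref{fm2}.

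First I would record the scaling identity $\int_{\R^d} |f_M(x)|^q \, dx = M^{\frac{qd}{2} - d} \int_{\R^d} |f(y)|^q \, dy$, valid for every $q > 0$ by the substitution $y = Mx$. Taking $q = 2$ and using $\|f\|_{L^2(\R^d)} = 1$ gives \eqref{fM0}; taking $q = p$ gives $\int_{\R^d} |f_M|^p \, dx = M^{\frac{pd}{2} - d} \|f\|_{L^p(\R^d)}^p$, and since $f$ is a fixed nonzero Schwartz function the constant $\|f\|_{L^p(\R^d)}^p$ is positive and finite, which is \eqref{fM1}.

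For \eqref{fm2} in the range $s \ge 0$ --- the only one used in the sequel --- I would apply the norm equivalence \eqref{sobolev_d} (or Lemma~\ref{LEM:sobolev} when $d = 1$) with $p = 2$, reducing matters to bounding $\|\jb{\nabla}^s f_M\|_{L^2}^2$ and $\|\jb{x}^s f_M\|_{L^2}^2$ separately. By \eqref{fMdef} the Fourier transform $\ft{f_M}$ is supported in the annulus $\{M/2 < |\xi| \le M\}$, so $\jb{\xi}^{2s} \sim M^{2s}$ there and Plancherel gives $\|\jb{\nabla}^s f_M\|_{L^2}^2 \sim M^{2s} \|f_M\|_{L^2}^2 = M^{2s}$. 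For the multiplication piece, the substitution $y = Mx$ yields $\|\jb{x}^s f_M\|_{L^2}^2 = \int_{\R^d} \jb{y/M}^{2s} |f(y)|^2 \, dy$, and since $\jb{y/M} \le \jb{y}$ for $M \ge 1$ while $s \ge 0$, this is at most $\int_{\R^d} \jb{y}^{2s} |f|^2 \, dy = O(1) \les M^{2s}$. Summing the two contributions yields \eqref{fm2} for $s \ge 0$.

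To cover the full range $s \in \R$ stated in the lemma, I would instead conjugate by the dilation. Writing $U_M g = M^{d/2} g(M\cdot)$ for the $L^2$-unitary scaling (so $f_M = U_M f$), a direct computation gives $U_M^{-1} \L U_M = M^2 \big( -\Dl + M^{-4} |x|^2 \big) =: M^2 \L_M$, hence $\L^{\frac{s}{2}} f_M = M^s U_M \L_M^{\frac{s}{2}} f$ and $\| \L^{\frac{s}{2}} f_M \|_{L^2} = M^s \| \L_M^{\frac{s}{2}} f \|_{L^2}$. Thus \eqref{fm2} is equivalent to the $M$-uniform bound $\| \L_M^{\frac{s}{2}} f \|_{L^2} \les 1$. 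For $s \ge 0$ this reproduces the bound above (or follows by expanding $\L_M^{k}$ and using $M^{-4} \le 1$), whereas the delicate case is $s < 0$: the operator $\L_M = -\Dl + M^{-4}|x|^2$ is a harmonic oscillator with lowest eigenvalue $\sim M^{-2}$, so the crude spectral bound on $\L_M^{s/2}$ degrades like $M^{|s|}$. \emph{This negative-$s$ estimate is the main obstacle.} I expect to resolve it by exploiting that $\ft f$ is supported in $\{ \frac12 < |\xi| \le 1 \}$, away from the origin: on such frequency-localized data $\L_M$ acts like $-\Dl$, whose symbol is $\sim 1$ on the support, so $f$ places no mass on the slowly varying low-lying modes of $\L_M$ that create the small eigenvalues. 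Making this precise --- e.g.\ by an almost-orthogonality estimate for the pairing of $f$ against the eigenfunctions of $\L_M$, or by comparing the resolvent $\L_M^{-1}$ with $(-\Dl)^{-1}$ on the annulus --- upgrades $\| \L_M^{s/2} f \|_{L^2}$ to the required $O(1)$ and completes \eqref{fm2}.
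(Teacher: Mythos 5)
Your treatment of \eqref{fM0}, \eqref{fM1}, and the case $s \ge 0$ of \eqref{fm2} is correct and essentially identical to the paper's proof: scaling for the first two identities, and for $s\ge0$ the norm equivalence \eqref{sobolev_d} combined with Plancherel on the Fourier support $\{M/2 < |\xi| \le M\}$ and the bound $\jb{x/M}^s \le \jb{x}^s$ for the potential term. The genuine gap is the case $s<0$. There you only \emph{reduce} the claim, via the dilation conjugation $\L^{\frac s2} f_M = M^s U_M \L_M^{\frac s2} f$, to the $M$-uniform bound $\|\L_M^{s/2} f\|_{L^2} \les 1$ with $\L_M = -\Dl + M^{-4}|x|^2$, and then state that you ``expect to resolve it'' by almost-orthogonality against the eigenfunctions of $\L_M$ or by a resolvent comparison. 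That unproven bound is exactly the content of the lemma for negative $s$, and it is not routine: the spectrum of $\L_M$ starts at $\sim M^{-2}$ and contains on the order of $M^{2d}$ eigenvalues below $O(1)$, so a negative power $\L_M^{s/2}$ can a priori lose a factor $M^{|s|}$; ruling this out requires quantitative, \emph{summable} overlap estimates of $f$ against all of these low modes, not the pointwise heuristic that each overlap is small. (An operator comparison in the spirit of L\"owner--Heinz, $\jb{\L_M^{-\alpha}f,f} \le \jb{(-\Dl)^{-\alpha}f,f}$, would close it, but only for $0\le\alpha\le1$, i.e.\ $-2\le s<0$, without further iteration.) As written, the negative-$s$ half of \eqref{fm2} is a plan, not a proof.

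The paper closes this case with a two-line duality argument that you could adopt verbatim. For $\sigma = -s \ge 0$, Lemma \ref{LEM:sobolev} and \eqref{sobolev_d} give $\|\jb{\nabla}^{\sigma} u\|_{L^2} \les \|\L^{\sigma/2} u\|_{L^2}$ for all $u$. Since $\jb{\nabla}^{\sigma}$ and $\L^{\sigma/2}$ are positive and self-adjoint, dualizing this inequality (write $\|\L^{-\sigma/2}v\|_{L^2} = \sup_{\|w\|_{L^2}=1} |\jb{\jb{\nabla}^{-\sigma}v, \jb{\nabla}^{\sigma}\L^{-\sigma/2}w}|$ and bound $\|\jb{\nabla}^{\sigma}\L^{-\sigma/2}w\|_{L^2} \les \|w\|_{L^2}$) yields $\|\L^{-\sigma/2} v\|_{L^2} \les \|\jb{\nabla}^{-\sigma} v\|_{L^2}$ for every $v$. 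Applying this with $v = f_M$ and using Plancherel together with the support of $\ft{f_M}$ in $\{M/2 < |\xi| \le M\}$ gives $\|\L^{-\sigma/2} f_M\|_{L^2} \les \|\jb{\nabla}^{-\sigma} f_M\|_{L^2} \sim M^{-\sigma}$, which is \eqref{fm2} for $s < 0$. This bypasses any spectral analysis of the rescaled oscillator $\L_M$ and covers all $s \in \R$ at once.
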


\begin{proof}
It is easy to see that \eqref{fM0} and \eqref{fM1} follow directly from the definition \eqref{fMdef}.

Now we turn to \eqref{fm2}.
We first consider the case $s \le 0$.
From \eqref{sobolev_1} and \eqref{sobolev_d}, we have 
\[
\| u \|_{H^{-s} (\R^d)} \les \|u\|_{\H^{-s} (\R^d)}.
\]

\noi 
Then the desired estimate \eqref{fm2} for $s \le 0$ follows from duality.
As for $s > 0$,
we have
\[
\begin{split}
\|\L^{\frac{s}2} f_M\|_{L^2({\R^d})} & \les \|\jb{\partial_x}^s f_M\|_{L^2({\R^d})} + \| \jb{x}^s f_M\|_{L^2({\R^d})} \\
& \les M^s \| f_M\|_{L^2({\R^d})} + \|M^{\frac{d}2}  \jb{x}^s f(Mx)\|_{L^2({\R^d})}\\
& \les M^s \|f_M\|_{L^2({\R^d})} +  \|   \jb{M^{-1}x}^s f(x)\|_{L^2({\R^d})} \\
& \les M^s +1,
\end{split}
\]

\noi
provided $M \ge 1$.
Therefore we finish the proof of \eqref{fm2} and thus the lemma. 
\end{proof}

\subsection{Construction of the drift terms}
In the next lemma, we construct
an approximation $Z_M (t)$ to $Y_N (t)$ in \eqref{nots}
by solving stochastic differential equations.
Note that there are similar stochastic approximations in  \cite[Lemma 3.4]{OST1} and \cite[Lemma 3.5]{LW} on $\T^d$ with the standard Laplacian.

\begin{lemma} 
\label{LEM:approx}
Given $ N \ge M\gg 1$,  define $Z_M$ by its coefficients in the eigenfunction expansion of $\L$
as follows.
For $n \leq M$, $\wt Z_M(n, t)$ is a solution of the following  differential equation\textup{:}
\begin{align}
\begin{cases}
d \wt Z_{M}(n, t) = \ld_n^{-1} \sqrt M  (\wt Y_N (n,t)- \wt Z_{M}(n, t)) dt \\
\wt Z_{M}|_{t = 0} =0, 
\end{cases}
\label{ZZZ}
\end{align}
where 
$$ \wt Y_N (n,t) = \int_{\R^d} Y_N(t,x) h_n(x) dx, $$
and we set $\wt Z_{M}(n, t)  \equiv 0$ for $n > M$.
Then, 
$$Z_M(t) = \sum_{n \le M} \wt Z_M(n,t) h_n(x)$$ 
is a centered Gaussian process in $L^2({\R^d})$, which satisfies $\P_{\le M} Z_M = Z_M$. 
Moreover, we have   
\begin{align}
&\E \big[ \|Z_M \|_{L^2({\R^d})}^2 \big]  \sim \log M,\label{NRZ0}\\
&\E\bigg[  2 \Re \int_{{\R^d}} Y_N \cj{Z_M} dx - \int_{{\R^d}} |Z_M|^2 dx   \bigg] \sim \log M, \label{NRZ1}\\
&\E \bigg[  \Big|  \wick{\| Y_N-Z_M\|_{L^2({\R^d})}^2}  \Big|^2      \bigg] \les M^{-1} \log M,    \label{NRZ3}\\
&\E\bigg[\Big| \int_{{\R^d}} Y_N  f_M dx \Big|^2\bigg] 
+ \E\bigg[\Big| \int_{{\R^d}} Z_M  f_M dx \Big|^2\bigg] \les M^{-\frac32},   \label{NRZ5}\\
&\E\bigg[\int_0^1 \Big\| \frac d {ds} Z_M(s) \Big\|^2_{\H^1 (\R^d)}ds\bigg] \les M \label{NRZ6}
\end{align}
	
\noi
for any $N \ge M \gg 1$, 
where  $Z_M =Z_M (1)$
and
\begin{align}
 \wick{ \| Y_N-Z_M\|_{L^2({\R^d})}^2} = 
 \| Y_N-Z_M\|_{L^2({\R^d})}^2 - \E\big[ \| Y_N-Z_M\|_{L^2({\R^d})}^2 \big].
\label{ZZZ2}
\end{align}

\end{lemma}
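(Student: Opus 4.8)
The plan is to solve the linear equation \eqref{ZZZ} explicitly, observe that $Z_M$ is then automatically a centred Gaussian process, and reduce each of the five bounds to a one-dimensional It\^o-isometry computation on each eigenmode, followed by summation against the spectral weights $\ld_n^2 = 4n+d \sim n$. Fix $n \le M$ and write $a_n = \sqrt M/\ld_n$. Since $\wt Y_N(n,t) = B_n(t)/\ld_n$, \eqref{ZZZ} is the linear ODE $\frac{d}{dt}\wt Z_M(n,t) = a_n(\ld_n^{-1}B_n(t) - \wt Z_M(n,t))$ driven by the path $B_n$, whose solution $\wt Z_M(n,t) = \frac{a_n}{\ld_n}\int_0^t e^{-a_n(t-s)}B_n(s)\,ds$ is a linear functional of $B_n$; hence $Z_M(t) = \sum_{n\le M}\wt Z_M(n,t)h_n$ is a centred Gaussian process with $\P_{\le M}Z_M = Z_M$. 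The key step on which everything rests is a deterministic integration by parts converting the $ds$-integral into a stochastic integral:
\[
\wt Z_M(n,t) = \frac{1}{\ld_n}\Big(B_n(t) - \int_0^t e^{-a_n(t-s)}\,dB_n(s)\Big),
\qquad
\wt Y_N(n,t) - \wt Z_M(n,t) = \frac{1}{\ld_n}\int_0^t e^{-a_n(t-s)}\,dB_n(s).
\]
The It\^o isometry then gives the basic error estimate $\E|\wt Y_N(n,1)-\wt Z_M(n,1)|^2 = \ld_n^{-2}\,\frac{1-e^{-2a_n}}{2a_n} \le \frac{1}{2\ld_n\sqrt M}$, together with the companion formulas for $\E|\wt Z_M(n,1)|^2$ and $\E[\wt Y_N(n,1)\cj{\wt Z_M(n,1)}]$.

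For \eqref{NRZ0} and \eqref{NRZ1} I would note that both $\ld_n^{-2} - \E|\wt Z_M(n,1)|^2$ and $\ld_n^{-2} - \big(2\Re\,\E[\wt Y_N\cj{\wt Z_M}] - \E|\wt Z_M|^2\big)$ are nonnegative and bounded by $C\ld_n^{-1}M^{-1/2}$ (the extra factor $a_n^{-1} = \ld_n M^{-1/2}$ is exactly what downgrades a $\ld_n^{-2}$ correction to a $\ld_n^{-1}$ one). Since $\sum_{n\le M}\ld_n^{-1}\sim M^{1/2}$, these correction sums are $O(1)$, while the surviving main term $\sum_{n\le M}\ld_n^{-2}\sim\log M$ yields both $\sim\log M$. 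For \eqref{NRZ3}, independence across $n$ and the fourth-moment identity $\E[(|\xi|^2-\E|\xi|^2)^2] = (\E|\xi|^2)^2$ for centred complex Gaussians reduce the second moment of the Wick-ordered mass to $\sum_{n\le N}\big(\E|\wt Y_N(n,1)-\wt Z_M(n,1)|^2\big)^2$; the modes $n\le M$ contribute $\les M^{-1}\sum_{n\le M}\ld_n^{-2}\sim M^{-1}\log M$ by the isometry bound above, and the tail $M<n\le N$ contributes $\sum_{n>M}\ld_n^{-4}\les M^{-1}$, uniformly in $N$. For \eqref{NRZ6}, the right-hand side of \eqref{ZZZ} gives $\frac{d}{ds}\wt Z_M(n,s) = \frac{a_n}{\ld_n}\int_0^s e^{-a_n(s-\tau)}dB_n(\tau)$, so $\E\|\frac{d}{ds}Z_M(s)\|_{\H^1}^2 = \sum_{n\le M}\tfrac12 a_n(1-e^{-2a_n s})$; integrating in $s$ bounds \eqref{NRZ6} by $\sum_{n\le M}\frac{a_n}{2} = \frac{\sqrt M}{2}\sum_{n\le M}\ld_n^{-1}\les M$.

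The estimate \eqref{NRZ5} is of a different flavour, being where the spectral localization of the profile enters. Since $Y_N$ and $Z_M$ are diagonal Gaussian series with mode variances $\les \ld_n^{-2}$, I would expand $\E|\int_{\R^d} Y_N f_M\,dx|^2 = \sum_{n\le N}\ld_n^{-2}|\jb{h_n,f_M}|^2$ and similarly for $Z_M$, then recognise the majorant $\sum_n \ld_n^{-2}|\jb{h_n,f_M}|^2 = \|f_M\|_{\H^{-1}(\R^d)}^2$ via Definition \ref{DEF:sob}. Invoking \eqref{fm2} with $s=-1$ gives $\|f_M\|_{\H^{-1}}^2 \les M^{-2}\le M^{-3/2}$, bounding both terms uniformly in $N$; intuitively the confining-frequency weight $\ld_n^{-2}$ annihilates the concentration of $f_M$ at spatial frequency $\sim M$.

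All computations are elementary once the stochastic representation is in hand, so the only genuinely delicate point is this integration-by-parts identity and the careful bookkeeping of the prefactors: one must track that the $O(1)$ corrections to the diagonal variances carry a factor $a_n^{-1}=\ld_n M^{-1/2}$, so that they are summable against $\ld_n^{-1}$ rather than $\ld_n^{-2}$ and hence lower order than the surviving $\log M$. I expect no obstruction beyond tracking these powers of $a_n$ and confirming uniformity in $N\ge M$, which holds automatically because $Z_M$ involves only the modes $n\le M$ and the sole $N$-dependence, in \eqref{NRZ3} and \eqref{NRZ5}, is governed by convergent tails.
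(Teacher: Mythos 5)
Your proposal is correct and follows essentially the same route as the paper: the explicit Ornstein--Uhlenbeck-type representation $\wt Y_N(n,t)-\wt Z_M(n,t)=\ld_n^{-1}\int_0^t e^{-\ld_n^{-1}\sqrt M(t-s)}\,dB_n(s)$ (which the paper obtains by solving the SDE for the difference rather than by your pathwise integration by parts, but it is the same identity), followed by mode-by-mode It\^o isometry, independence across modes, the complex-Gaussian fourth-moment identity for \eqref{NRZ3}, and summation against the spectral asymptotics $\ld_n^2\sim n$. The only cosmetic deviation is in \eqref{NRZ5}, where you bound the mode variances of $Z_M$ directly by $\ld_n^{-2}$ and invoke $\|f_M\|_{\H^{-1}}^2\les M^{-2}$, yielding a slightly sharper bound than the paper's $M^{-3/2}$, which goes through a cruder estimate of the $X_n$ contribution.
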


\begin{proof}[Proof of Lemma \ref{LEM:approx}]
Let 
\begin{align}
X_n(t)=\wt Y_N(n, t)- \wt Z_{M}(n, t), 
\quad 0\le n \le M.
\label{ZZ1} 
\end{align}
Then,  from \eqref{Yt} and \eqref{ZZZ}, 
we see that $X_n(t)$ satisfies 
the following stochastic differential equation:
\begin{align*}
\begin{cases}
dX_n(t)=- \ld_n^{-1} \sqrt M X_n(t) dt + \ld_n^{-1} dB_n(t)\\
X_n(0)=0
\end{cases}
\end{align*}	

\noi
for $0\le n \le M$.
By solving this stochastic differential equation, we have
\begin{align}
X_n(t)= \ld_n^{-1} \int_0^t e^{-  \ld_n^{-1} \sqrt M (t-s)}dB_n(s).
\label{ZZ2}
\end{align}

\noi
Then, from \eqref{ZZ1} and \eqref{ZZ2}, we have 
\begin{align}
\wt Z_{M}(n, t)= \wt Y_N(n, t)- \ld_n^{-1} \int_0^t  e^{-  \ld_n^{-1} \sqrt M (t-s)} dB_n(s)
\label{SDE1}
\end{align}

\noi
for $n \le M$. Hence, from \eqref{SDE1}, the independence of $\{B_n \}_{n \in \N}$,
Ito's isometry, and \eqref{Yt}, we have
\begin{align}
\begin{split}
\E \big[ \|Z_M\|_{L^2 ({\R^d})}^2 \big]&=\sum_{n \le M} \bigg( \E \big[  | \ft Y_N(n) |^2  \big]
- 2 \ld_n^{- 2} \int_0^1 e^{- \ld_n^{-1} \sqrt  M (1-s)}ds \\
& \hphantom{XXXXX}
+ \ld_n^{- 2} \int_0^1 e^{-2 \ld_n^{-1} \sqrt  M (1-s)}ds \bigg)\\
& \sim \log M + O\Big(\sum_{n \le M } \ld_n^{-1} M^{-\frac12}\Big)\\
& \sim \log M
\end{split}
\label{ZZ3}
\end{align}

\noi
for any $M\gg 1$.
This proves \eqref{NRZ0}.

By the $L^2$ orthogonality of $\{h_n\}_{n\in\N}$,  \eqref{SDE1}, \eqref{NRZ0}, and  
proceeding as in \eqref{ZZ3}, we have
\begin{align*}
\E\bigg[  2 \Re \int_{{\R^d}} Y_N & \cj{Z_M} dx - \int_{{\R^d}} |Z_M|^2 dx   \bigg]
=\E \bigg[ 2 \Re \sum_{n \le M }\wt Y_N(n)  \cj{ \wt Z_M(n)} -\sum_{n \le M }|  \wt Z_M(n) |^2    \bigg]\\
&=\E \bigg[ \sum_{n \le M } | \wt Z_M(n)  |^2+ \sum_{n \le M } \Re \bigg( 2 \ld_n^{-1}  \int_0^1 e^{- \ld_n^{-1} \sqrt M (1-s) }dB_n(s) \bigg)  \cj{\wt Z_M(n)}   \bigg]\\
&\sim \log M+O\Big(\sum_{n \le M } \ld_n^{-1}  M^{-\frac12}\Big)\\
&\sim \log M 
\end{align*} 

\noi 
for any $N\ge M\gg 1$.
This proves  \eqref{NRZ1}.

Note that  $\wt Y_N(n)-\wt Z_M(n)$ is a mean-zero Gaussian random variable.
Then, from \eqref{SDE1}
and Ito's isometry, 
we have 
\begin{align}
\begin{split}
\E \bigg[ \Big( & |\wt Y_N(n)-  \wt Z_M(n)|^2-\E\big[ |\wt Y_N(n)-\wt Z_M(n)|^2 \big] \Big)^2  \bigg]\\
& \les 
\Big(\E\big[ |\wt Y_N(n)-\wt Z_M(n)|^2 \big] \Big)^2  \\
& \sim \ld_n^{-4} \bigg(\int_0^1 e^{-2 \ld_n^{-1} \sqrt M(1-s)  } ds \bigg)^2 \\
&\sim \ld^{-2}_n M^{-1},
\end{split}
\label{ZZ4}
\end{align}

\noi
for $0 \le n \le M$.
Hence, 
from Plancherel's theorem, \eqref{ZZZ2}, 
 the independence of $\{B_n \}_{n \in \N}$, 
 the independence
of 
$\big\{ |\wt Y_N(n) |^2- \E \big[ | \wt Y_N(n)|^2 \big]\big\}_{M < n \le N}$
and 
\[\big\{ |\wt Y_N(n)-\wt Z_M(n)|^2-\E\big[ |\wt Y_N(n)-\wt Z_M(n)|^2\big]\big\}_{n \le M},\]

\noi 
 \eqref{Yt}, 
 and \eqref{ZZ4}, we have
\begin{align*}
\E \Big[  & \big|  \wick{\| Y_N-Z_M\|_{L^2({\R^d})}^2}  \big|^2      \Big] \notag \\
&= \sum_{M< n \le N } \E \bigg[ \Big( |\wt Y_N(n) |^2- \E \big[ | \wt Y_N(n)|^2 \big] \Big)^2 \bigg]\notag \\
&\hphantom{XX}+ \sum_{n \le M} \E \bigg[ \Big( |\wt Y_N(n)-\wt Z_M(n)|^2-\E\big[ |\wt Y_N(n)-\wt Z_M(n)|^2 \big] \Big)^2  \bigg]\notag  \\
&\les \sum_{M< n \le N} {\ld_n^{-4}}
+\sum_{ n \le M }
\ld_n^{-2} M^{-1} 
\les M^{- 1} \log M.
\end{align*}

\noi
This proves \eqref{NRZ3}.

From \eqref{fm2} and \eqref{Yt}, we have
\begin{align}
\begin{split}
\E\bigg[ \Big| \int_{{\R^d}} Y_N  f_M dx  \Big|^2\bigg]
&= \E \bigg[ \Big| \sum_{n \le N}  \wt Y_N(n) { \jb{ f_M, h_n}} \Big|^2 \bigg]
= \sum_{n \le N} \ld_n^{-2} |\jb{ f_M, h_n}|^2 \\
&\le \int_{{\R^d}} \big|\L^{-\frac12} f_M (x)\big|^2 dx
\les M^{-2},
\end{split}
\label{app4}
\end{align}

\noi
where in the last step we used Lemma \ref{LEM:soliton}.
From \eqref{ZZ2}, Ito's isometry, and \eqref{fm2}, we have 
\begin{align}
\begin{split}
 \E \bigg[ \Big| \sum_{n \le M} X_n(1) \cj{ \jb{ f_M, h_n} } \Big|^2 \bigg]
&=\E \Bigg[ \bigg|\sum_{n \le M} \bigg(\ld_n^{-1}  \int_0^1 e^{- \ld_n^{-1} \sqrt M(1-s) } dB_n(s) \bigg)  {\jb{ f_M, h_n}}     \bigg|^2     \Bigg]\\
&\les M^{- \frac12} \sum_{n \le M}  \ld_n^{-1} | \jb{ f_M, h_n}|^2\\
&\les M^{- \frac12} \|\L^{-\frac12} f_M\|_{L^2(\R^d)}^2\\
& \les M^{-\frac32}.
\end{split}
\label{app5}
\end{align}

\noi
Hence, \eqref{NRZ5} follows
from \eqref{app4} and \eqref{app5}
with \eqref{SDE1}.

Lastly, from \eqref{ZZZ}, \eqref{ZZ1}, \eqref{ZZ2}, and Ito's isometry, we have 
\begin{align*}
\E\bigg[\int_0^1 \Big\| \frac d {ds} Z_M(s) \Big\|^2_{\H^1}ds\bigg] &= M \E\bigg[\int_0^1 \Big\| \P_{\le M}(Y_N(s)) - Z_M(s) \Big\|^2_{L^2}ds\bigg] \\
&= M \E\bigg[ \int_0^1  \sum_{n \le M} |X_n(s)|^2   ds\bigg]\\
&=  M \sum_{n \le M} \int_0^1\E\big[|X_n(s)|^2\big] ds \\
&=M \sum_{n \le M} \ld_n^{-2} \int_0^1 \int_0^s e^{-2 \ld_n^{-1} \sqrt M (s-s')} d s' ds \\
& \les M \sum_{n \le M } \ld_n^{-1} M^{-\frac12} \\
&\les M, 
\end{align*}

\noi
yielding  \eqref{NRZ6}.
This  completes the proof of Lemma~\ref{LEM:approx}.
\end{proof}

We now define $ \al_{M, N}$ by
\begin{align} 
 \al_{M, N}= \frac {\E \bigg[ 2 \Re \int_{\R}Y_N \cj{Z_M} dx-\int_{\R}|Z_M|^2  dx \bigg]}{\int_{\R^d} |\P_{\le N}f_M|^2 dx}.
\label{fmb1}
\end{align}

\noi
for $N\ge M \gg 1$.
Then, from \eqref{fM0} and \eqref{NRZ1}, we have
\begin{align}
 \al_{M, N} \sim \log M
\label{logM}
\end{align}

\noi
provided $N \gg M$ and $N$ is sufficiently large.
We are now ready to prove the divergence \eqref{pa0}.

For $M \gg 1$,
we set $f_M$, $Z_M$, and $ \al_{M, N}$ as in \eqref{fMdef}, Lemma \ref{LEM:approx}, and \eqref{fmb1}.
For the minimization problem \eqref{DPf},
we set  a drift $\dr= \dr^0$ by 
\begin{align}
\begin{split}
 \dr^0 (t) 
 & = \L^{\frac12} \bigg( -\frac{d}{dt} Z_M(t) + \sqrt{ \al_{M, N}} f_M \bigg)
\end{split}
\label{drift}
\end{align}
	
\noi
such that 
\begin{align}
\Dr^0 = I(\theta^0)(1) 
= \int_0^1 \L^{-\frac12} \dr^0(t) \, dt = - Z_M + \sqrt{ \al_{M, N}} f_M.
\label{paa0}
\end{align}

\noi
As a consequence of Lemma \ref{LEM:approx}, 
we have
\begin{lemma}
\label{LEM:bddrift}
Let $\theta^0$ as in \eqref{drift},
then we have
\[
\int_0^1 \E \big[\| \theta^0 (t)\|_{L^2 ({\R^d})}^2 \big] dt \les M^2 \log M,
\]

\noi
uniformly in $N \ge N_0(M) \gg M \ge 1$.
\end{lemma}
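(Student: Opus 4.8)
The plan is to reduce the quantity $\int_0^1 \E\big[\|\theta^0(t)\|_{L^2(\R^d)}^2\big]\,dt$ to the two ingredients already established in Lemma~\ref{LEM:approx}: the $\H^1$ bound \eqref{NRZ6} on the time derivative $\frac{d}{dt}Z_M$, and the Sobolev bound \eqref{fm2} on $f_M$, together with the size estimate \eqref{logM} for $\al_{M,N}$. The first step is to exploit the isometry built into Definition~\ref{DEF:sob}: since $\|\L^{\frac12}g\|_{L^2(\R^d)} = \|g\|_{\H^1(\R^d)}$, the definition \eqref{drift} of $\theta^0$ gives
\[
\|\theta^0(t)\|_{L^2(\R^d)} = \Big\|-\tfrac{d}{dt}Z_M(t) + \sqrt{\al_{M,N}}\,f_M\Big\|_{\H^1(\R^d)}.
\]
Viewing the left-hand side as a norm in the Hilbert space $L^2(\O\times[0,1];\H^1(\R^d))$ and applying Minkowski's inequality, I would obtain
\[
\bigg(\int_0^1 \E\big[\|\theta^0(t)\|_{L^2}^2\big]\,dt\bigg)^{\frac12}
\le \bigg(\int_0^1 \E\Big[\big\|\tfrac{d}{dt}Z_M(t)\big\|_{\H^1}^2\Big]\,dt\bigg)^{\frac12}
+ \sqrt{\al_{M,N}}\,\|f_M\|_{\H^1},
\]
where I have used that $f_M$ is deterministic and independent of $t$, so its contribution integrates to $\|f_M\|_{\H^1}$.

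The second step is to insert the estimates. By \eqref{NRZ6}, the first term on the right is $\les M^{\frac12}$. For the second term, \eqref{fm2} with $s=1$ gives $\|f_M\|_{\H^1}^2 = \int_{\R^d}(\L^{\frac12}f_M)^2\,dx \les M^2$, hence $\|f_M\|_{\H^1}\les M$, while \eqref{logM} yields $\sqrt{\al_{M,N}}\sim(\log M)^{\frac12}$. Thus the second term is $\les M(\log M)^{\frac12}$. Since $M\gg 1$, this term dominates $M^{\frac12}$, so the whole right-hand side is $\les M(\log M)^{\frac12}$, and squaring gives the claimed bound $\les M^2\log M$.

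I do not expect any serious obstacle here; the lemma is essentially a bookkeeping consequence of Lemma~\ref{LEM:approx}. The only points requiring minor care are the following. First, one should confirm that $\frac{d}{dt}Z_M$ is precisely the object estimated in \eqref{NRZ6}: differentiating \eqref{ZZZ} gives $\frac{d}{dt}Z_M(t) = \sqrt{M}\,\L^{-\frac12}\P_{\le M}\big(Y_N(t)-Z_M(t)\big)$, so that $\|\frac{d}{dt}Z_M(t)\|_{\H^1}^2 = M\|\P_{\le M}(Y_N(t)-Z_M(t))\|_{L^2}^2$, which is exactly what appears in the proof of \eqref{NRZ6}. Second, the bound must be uniform for $N\ge N_0(M)\gg M$; this is automatic, since \eqref{NRZ6}, \eqref{fm2}, and \eqref{logM} all hold uniformly in $N$ in that regime. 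One could equivalently expand the square and bound the cross term by Cauchy--Schwarz, but the Minkowski approach above keeps the computation cleanest.
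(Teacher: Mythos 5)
Your proof is correct and follows essentially the same route as the paper: both split $\theta^0$ via \eqref{drift} into the $\frac{d}{dt}Z_M$ part and the $\sqrt{\al_{M,N}}\,f_M$ part (triangle inequality versus the equivalent $(a+b)^2\les a^2+b^2$), then invoke \eqref{NRZ6} for the former and \eqref{fm2} together with \eqref{logM} for the latter. Your additional verification that $\|\tfrac{d}{dt}Z_M(t)\|_{\H^1}^2 = M\|\P_{\le M}(Y_N(t)-Z_M(t))\|_{L^2}^2$ matches exactly what the paper uses inside the proof of \eqref{NRZ6}, so no gap remains.
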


\begin{proof}
From \eqref{NRZ6} and \eqref{drift},
it only suffices to show that
\[
 \al_{M, N} \|  f_M \|_{\H^1({\R^d})}^2 \les M^2 \log M,
\]

\noi
which follows from \eqref{fm2} and \eqref{logM} provided $N \gg M$.
Thus we finish the proof.
\end{proof}

In what follows,
we abuse the notation and denote  
\begin{align}
 \Dr^0_N = \P_{\le N} \Dr^0
= -Z_M + \sqrt{\al_{M,N}} 
(\P_{\le N} f_M)
\label{YY0a}
\end{align}
for $N \ge M \ge 1$.
We remark that $\sqrt{\al_{M,N}} 
(\P_{\le N} f_M)$ acts as a blow-up profile in our analysis, 
and $\theta^0 \in \mathbb H_a$ is the stochastic drift such that $Y_N + \Theta^0_N$ approximates $\sqrt{\al_{M,N}} 
(\P_{\le N} f_M)$,
which drives the potential energy $R_p(Y_N + \Theta^0_N)$ to blow up.
More remarkably, 
due to the construction, 
the Wick-ordered $L^2$ norm of this approximation $Y_N + \Theta^0_N$ can be made as small as possible,
i.e. the cutoff in the Wick-ordered $L^2$ norm does not exclude the blow-up profiles. 

\begin{lemma}
\label{LEM:key}
For any $K>0$, there exists $M_0=M_0(K) \geq 1$ such that 
\begin{align}
\PP\bigg( \Big| \int_{\R^d} \wick{|Y_N (x)|^2} dx + \int_{\R^d} \big(2 \Re(Y_N \cj{\Dr^0_N}) + |\Dr^0_N|^2 \big) dx \Big| \le K \bigg) 
\ge \frac 12, 
\label{pa5}
\end{align}
	
\noi
uniformly in $N \ge N_0(M) \gg M \ge M_0$.
\end{lemma}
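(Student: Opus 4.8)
The plan is to show that the real-valued random variable
\[
Q_N := \int_{\R^d}\wick{|Y_N|^2}\,dx + \int_{\R^d}\big(2\Re(Y_N\cj{\Dr^0_N}) + |\Dr^0_N|^2\big)\,dx
\]
appearing inside the probability in \eqref{pa5} is centered with variance tending to $0$ as $M\to\infty$, uniformly in $N\ge N_0(M)\gg M$. Granting this, \eqref{pa5} follows immediately from Chebyshev's inequality: once $\E[Q_N^2]\le\tfrac12 K^2$ we obtain $\PP(|Q_N|>K)\le\tfrac12$, hence $\PP(|Q_N|\le K)\ge\tfrac12$, and one takes $M_0(K)$ large enough for this to hold.

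First I would derive an exact algebraic identity for $Q_N$. Inserting $\Dr^0_N = -Z_M + \sqrt{\al_{M,N}}\,\P_{\le N}f_M$ from \eqref{YY0a} and expanding, the terms quadratic in $Y_N,Z_M$ combine, via $\int\wick{|Y_N|^2}=\|Y_N\|_{L^2}^2-\E\|Y_N\|_{L^2}^2$ and $\|Y_N-Z_M\|_{L^2}^2-\|Y_N\|_{L^2}^2 = -B_N$ (where $B_N:=2\Re\int_{\R^d}Y_N\cj{Z_M}\,dx-\int_{\R^d}|Z_M|^2\,dx$), into $\wick{\|Y_N-Z_M\|_{L^2}^2}-\E[B_N]$. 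The key point is that the remaining deterministic term is $\al_{M,N}\int_{\R^d}|\P_{\le N}f_M|^2\,dx$, which by the very definition \eqref{fmb1} of $\al_{M,N}$ equals $\E[B_N]$ and therefore cancels the $-\E[B_N]$ above. Since $\sqrt{\al_{M,N}}$ and $f_M$ are real, the cross terms collapse and one is left with the clean identity
\[
Q_N = \wick{\|Y_N-Z_M\|_{L^2}^2} + 2\sqrt{\al_{M,N}}\,\Re\int_{\R^d}(Y_N-Z_M)\,\P_{\le N}f_M\,dx .
\]
In particular $\E[Q_N]=0$, since $Y_N$ and $Z_M$ are centered Gaussian.

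Next I would estimate $\E[Q_N^2]$ by twice the sum of the second moments of the two summands. The first is controlled directly by \eqref{NRZ3}, giving $\E\big[\,|\wick{\|Y_N-Z_M\|_{L^2}^2}|^2\,\big]\les M^{-1}\log M$. For the second, I would first observe that since $Y_N$ and $Z_M$ are spectrally supported in $\{n\le N\}$, the cutoff $\P_{\le N}$ may be dropped in the pairing, so that $\int_{\R^d}(Y_N-Z_M)\,\P_{\le N}f_M\,dx = \int_{\R^d}Y_N f_M\,dx - \int_{\R^d}Z_M f_M\,dx$; then \eqref{NRZ5} yields $\E\big[\,|\int_{\R^d}(Y_N-Z_M)f_M\,dx|^2\,\big]\les M^{-\frac32}$. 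Combining with $\al_{M,N}\sim\log M$ from \eqref{logM}, the second summand contributes $\les M^{-\frac32}\log M$. Hence $\E[Q_N^2]\les M^{-1}\log M\to 0$ as $M\to\infty$, uniformly in $N\ge N_0(M)\gg M$, which is exactly the bound needed above.

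The main obstacle is the first step. Before the cancellation, $Q_N$ contains contributions that diverge or grow: the $\E\|Y_N\|_{L^2}^2\sim\log N$ pieces buried in the Wick ordering, and the term $\al_{M,N}\int|\P_{\le N}f_M|^2\sim\log M$. The crux of the argument is to recognize that forming the combination $\wick{\|Y_N-Z_M\|_{L^2}^2}$ automatically removes the $\log N$ divergence, while the normalization $\al_{M,N}$ in \eqref{fmb1} is tuned precisely so that its $\log M$-sized deterministic contribution cancels $\E[B_N]$ (itself of size $\log M$ by \eqref{NRZ1}). Only after this double cancellation does $Q_N$ reduce to a genuinely small mean-zero fluctuation, at which point the variance estimates of Lemma \ref{LEM:approx} render the remainder routine.
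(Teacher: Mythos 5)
Your proposal is correct and follows essentially the same route as the paper: the same algebraic reduction of the cutoff quantity to $\wick{\|Y_N-Z_M\|_{L^2}^2} + 2\sqrt{\al_{M,N}}\,\Re\int_{\R^d}(Y_N-Z_M)f_M\,dx$ (exploiting that the definition \eqref{fmb1} of $\al_{M,N}$ is tuned exactly to cancel the deterministic $\log M$-sized term), the same second-moment bounds \eqref{NRZ3}, \eqref{NRZ5}, \eqref{logM} yielding variance $\les M^{-1}\log M$, and the same Chebyshev conclusion. The only cosmetic differences are that you note the centering of $Q_N$ (not actually needed, since Chebyshev here is just Markov applied to $Q_N^2$) and that you make explicit the removal of $\P_{\le N}$ in the pairing with $f_M$, which the paper uses implicitly.
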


\begin{proof} 
First we note that the condition $N\ge N_0(M) \gg M$ guarantees that 
\[\int_{{\R^d}} |\P_{\le N}f_M|^2 dx \ges 1,\]
which further implies that $\al_{M,N} \sim \log M$.
From \eqref{paa0}, we have 
\begin{align}
\begin{split}
&\E \bigg[ \Big| \int_{\R^d} \wick{|Y_N (x)|^2} dx  + \int_{\R^d} \big(2 \Re(Y_N \cj{\Dr^0_N}) + |\Dr^0_N|^2 \big) dx \Big|^2 \bigg]\\
&= \E \bigg[ \Big|\int_{\R^d} \wick{|Y_N (x)|^2} dx  - 2 \Re \int_{{\R^d}} Y_N \cj{Z_M} dx+\int_{{\R^d}} |Z_M|^2 dx\\
&\hphantom{XX}
+   \al_{M, N} \int_{{\R^d}} |\P_{\le N}f_M|^2 dx  + 2 \sqrt{ \al_{M, N}} \Re \int_{{\R^d}} (Y_N-Z_M)f_M dx \Big|^2 \bigg].
\end{split}
\label{Pr1}
\end{align}
	
\noi
From \eqref{logM} and \eqref{NRZ5} in Lemma \ref{LEM:approx}, we have
\begin{align}
\E \bigg[ \Big| \sqrt{ \al_{M, N}}  \int_{{\R^d}} (Y_N-Z_M)f_M dx   \Big|^2   \bigg] \les M^{-\frac32} \log M.
\label{Pr4}
\end{align}	

\noi
On other hand, 
from \eqref{fmb1} and \eqref{ZZZ2}, we have	
\begin{align}
\begin{split}
 \int_{\R^d} & \wick{|Y_N (x)|^2} dx - 2 \Re \int_{{\R^d}} Y_N \cj{Z_M} dx+\int_{{\R^d}} |Z_M|^2 dx+   \al_{M, N} \int_{{\R^d}} |\P_{\le N}f_M|^2 dx \\
& =\int_{{\R^d}} |Y_N-Z_M|^2 -\E \big[|Y_N-Z_M |^2 \big] dx \\
& =\wick{\| Y_N-Z_M\|_{L^2({\R^d})\,}^2  \!}\,.
\end{split}
\label{Pr2} 
\end{align}	
	
\noi
Hence, from \eqref{Pr1}, \eqref{Pr4}, and \eqref{Pr2} with \eqref{NRZ3}
in Lemma \ref{LEM:approx}, we obtain
\begin{align*}
\E \bigg[ \Big|   \int_{\R^d} \wick{|Y_N (x)|^2} dx + \int_{\R^d} \big(2 \Re(Y_N \cj{\Dr^0_N}) + |\Dr^0_N|^2 \big) dx \Big|^2 \bigg]\les M^{- 1 }\log M.
\end{align*}

\noi
Therefore, by Chebyshev's inequality, 
given any $K > 0$, there exists $M_0 = M_0(K) \geq 1$ such that 
\begin{align*}
\PP\bigg( \Big|  \int_{\R^d} \wick{|Y_N (x)|^2} dx + \int_{\R^d} \big(2 \Re(Y_N \cj{\Dr^0_N}) + |\Dr^0_N|^2 \big) dx \Big| > K \bigg)
&\le C\frac{M^{- 1} \log M}{K^2}
< \frac 12
\end{align*}

\noi
for any $M \ge M_0 (K)$.
This proves  \eqref{pa5}.
\end{proof}

\subsection{Proof of \texorpdfstring{\eqref{pax}}{Lg}}
Let us recall that $\Theta^0 = -Z_M + \sqrt{\al_{M,N}} f_M$.
Therefore, using the mean value theorem and Young's inequality, we have for any $\eps > 0$,
\begin{align}
\begin{split} 
\big| & R_p(Y_N + \Dr^0) - R_p(\sqrt{\alpha_{M,N}}f_M)\big|\\
&\le C \int_{\R^d} |Y_N - Z_M| \big(|Y_N-Z_M| + |\sqrt{\alpha_{M,N}} f_M|\big)^{p-1}dx\\
& \le \eps R_p(\sqrt{\alpha_{M,N}}f_M) +C_\eps R_p(Y_N-Z_M).
\end{split}
\label{paa}
\end{align}
 Moreover, proceeding as in the proof of Corollary \ref{COR:intp}, 
 we have
 \begin{align}
 \begin{split}
 \E\big[R_p(Y_N -Z_M) \big] & = \frac rp \int_{\R^d} \E \bigg[ \Big|\sum_{M < n\le N}\frac{B_n(1) h_n}{\lambda_n}+\sum_{n\le M}X_n(1)h_n\Big|^p \bigg] dx\\
 &\les  \int_{\R^d} \bigg( \E \bigg[ \Big|\sum_{M < n\le N}\frac{B_n(1) h_n}{\lambda_n}+\sum_{n\le M}X_n(1)h_n\Big|^2 \bigg]\bigg)^{\frac{p}{2}}dx\\
 &\les  \int_{\R^d}\Big(\sum_{M < n\le N}\frac{1}{\lambda_n^2}h_n^2+\sum_{n\le M}\frac1{\lambda_n \sqrt M} h_n^2\Big)^{\frac{p}{2}}dx\\
 &\les  \Big(\sum_{M < n\le N}\frac{1}{\lambda_n^2}\|h_n\|_{L^p}^2+\sum_{n\le M}\frac1{\lambda_n \sqrt M} \|h_n\|_{L^p}^2\Big)^{\frac{p}{2}}\\
 &\les  \Big(\sum_{M < n\le N}\frac{1}{\lambda_n^{2+\dl}}+\sum_{n\le M} \frac1{\lambda_n^{1+\theta(p)} \sqrt M} \Big)^{\frac{p}{2}}\\
 &\les  1,
 \end{split}
 \label{paa2}
 \end{align}
where $\theta (p) >0$ provided $p< \infty$ when $d = 1,2$, and $p < \frac{2d}{d-2}$ when $d \ge 3$, uniformly in $M$.
Here we use the fact that $X_n (1)$ is a Gaussian random variable with variance
$\sim (\ld_n \sqrt M)^{-1}$.

We are now ready to put everything together.
It follows from 
\eqref{DPf},  \eqref{paa}, \eqref{paa2}, Corollary \ref{COR:intp},
and \eqref{YY0a}
that 
there exist constants $C_1, C_2, C_3 > 0 $ such that 
\begin{align*}
 -\log & \, \E_\mu\Big[\exp\Big({  R_p (u_N)} \cdot \ind_{\{|\int_{{\R^d}} : |u_N|^2 : dx|  \le K\}}\Big)   \Big]\notag \\
&\le \E\bigg[ - R_p (Y_N + \Dr^0) \cdot 
\ind_{\{ |  \int_{{\R^d}} \wick{|Y_N|^2} dx + \int_{\R^d} (2 \Re(Y_N \cj{\Dr^0_N}) + |\Dr^0_N|^2 ) dx  | \le K\}} + \frac 12 \int_0^1 \| \dr^0(t)\|_{L^2_x} ^2 dt \bigg] \notag \\
&\le  \E\bigg[ - \frac12 R_p (\sqrt{\alpha_{M,N}}f_M) \cdot \ind_{\{ |  \int_{{\R^d}} : |Y_N|^2 : dx  + \int_{\R^d} (2 \Re(Y_N \cj{\Dr^0_N}) + |\Dr^0_N|^2 ) dx | \le K\}} \notag 
  \\
 & \hphantom{XXXX} + C_\dl  R_p (Y_N - Z_M)
+ \frac 12 \int_0^1 \| \dr^0(t)\|_{L^2_x} ^2 dt \bigg] \notag \\
&\le    - \frac12 R_p (\sqrt{\alpha_{M,N}}f_M) \cdot \mathbb \PP\bigg( \Big| \int_{\R^d} \wick{|Y_N|^2} dx + \int_{\R^d} \big(2 \Re(Y_N \cj{\Dr^0_N}) + |\Dr^0_N|^2 \big) dx \Big| \le K \bigg)  \notag 
  \\
 & \hphantom{XXXX} + C_\dl \E \big[ R_p (Y_N - Z_M) \big]
+ \frac 12 \int_0^1 \E \big[ \| \dr^0(t)\|_{L^2_x} ^2 \big] dt \notag \\
\intertext{then from Lemma \ref{LEM:key}, \eqref{paa2}, and Lemma \ref{LEM:bddrift}, we may continue with} 
& \le - C_1   M^{\frac{pd}2-d}  (\log M)^\frac{p}2+ C_2  M^2 \log M  + C_3
\end{align*}

\noi
provided  $N \ge N_0(M) \gg M \ge M_0(K)$.
Therefore, it follows that
\begin{align*}
\begin{split}
 \liminf_{N \to \infty} \E_\mu\Big[& \exp\Big({  R_p (u_N)} \cdot \ind_{\{ | \int_{{\R^d}} : |u_N|^2 : dx | \le K\}}\Big)   \Big]\\
\ge &~  \exp\Big(  C_1     M^{\frac{pd}2-d}  (\log M)^\frac{p}2 - C_2  M^2 \log M   -C_3 \Big), 
\end{split}
\end{align*}
	
\noi
which diverges to infinity
as $M \to \infty$, provided that $p \ge p^*(d) = 2+\frac{4}d$.
This proves \eqref{pa0}.
 
\subsection{Proof of Theorem \ref{THM:main} - (ii) and Theorem \ref{THM:main2} - (ii)} 
Recall the decomposition \eqref{mu} 
$$ d \mu(u) = d \mu_N(u_N) \otimes d \mu_N^\perp(u_N^\perp), $$
where $u_N = \P_{\le N} u$ and $u_N^\perp = u - \P_{\le N} u$. 
Moreover, by \eqref{law}, we have that 
$$\Law(Y_N(1), Y(1) - Y_N(1)) = \mu_N \otimes \mu_N^\perp.$$
Define the set 
\begin{equation}
 \O_K^\perp = \Big\{ u_N^\perp: \Big| \int_{\R^d} \wick{|u_N^\perp|^2}dx\Big| \le \frac K2, \frac 1p \int |u_N^\perp|^p \le 1 \Big\}, \label{muperpdef}
\end{equation}
where we defined
$$ \int_{\R^d} \wick{|u_N^\perp|^2}dx = \sum_{n=N+1}^\infty \frac{|g_n|^2 - 1}{\ld_n}.$$

\noi
Notice that we need to define only the integral of the Wick power, and not the Wick power itself. This is chosen so that 
$$ \int_{\R^d} \wick{|u_N^\perp|^2}dx = \int_{\R^d} \wick{|u|^2}dx - \int_{\R^d} \wick{|u_N|^2}dx. $$
From Corollary \ref{COR:intp}, (iii) and Corollary \ref{COR:WCE}, we obtain that 
\begin{equation}\label{muperpestimate}
\mu_N^\perp(\O_K^\perp) \ge \frac12 
\end{equation}
for $N \gg1$.
From the elementary inequality 
$$ |a + b|^p \ge \frac 12 |a|^p - C |b|^p $$
for some constant $C > 0$, we deduce that 
\begin{equation}\label{elementary}
\exp\Big(\frac 1p \int_{\R^d} |u|^p dx \Big) \ge \exp\Big(-\frac Cp \int_{\R^d} |u_N^\perp|^p dx \Big) \exp\Big(\frac 1{2p} \int_{\R^d} |u_N|^p dx \Big).
\end{equation}
Therefore, by \eqref{muperpdef}, \eqref{elementary}, \eqref{muperpestimate}, and \eqref{pax} with $r = 1$, we obtain 
\begin{align*}
\mathcal Z_K &= \int \exp\Big(\frac 1p \int_{\R^d} |u|^p dx \Big) \ind_{ \{ | \int_{\R} \wick{| u (x)|^2} dx | \le K\}}   d \mu(u) \\
&= \int \exp\Big(\frac 1p \int_{\R^d} |u_N + u_N^\perp|^p dx \Big) \ind_{ \{ | \int_{\R} \wick{| u_N (x)|^2} dx  + \int_{\R} \wick{| u_N^\perp (x)|^2} dx | \le K\}} d \mu_N(u_N)d \mu_N^\perp(u_N^\perp)\\
&\ge \int \exp\Big(-\frac Cp \int_{\R^d} |u_N^\perp|^p dx \Big)\exp\Big(\frac 1{2p} \int_{\R^d} |u_N|^p dx \Big) \\
&\phantom{\int\exp\Big()} \times \ind_{ \{ | \int_{\R} \wick{| u_N (x)|^2} dx  + \int_{\R} \wick{| u_N^\perp (x)|^2} dx | \le K\}}\ind_{\O_K^\perp}(u_N^\perp) d \mu_N(u_N)d \mu_N^\perp(u_N^\perp)\\
&\ge \int e^{-C} \exp\Big(\frac 1{2p} \int_{\R^d} |u_N|^p dx \Big) \ind_{ \{ | \int_{\R} \wick{| u_N (x)|^2} dx| \le \frac K2\}}\ind_{\O_K^\perp}(u_N^\perp) d \mu_N(u_N)d \mu_N^\perp(u_N^\perp) \\
&= e^{-C}\mu_N^\perp(\O_K^\perp) \int \exp\Big(\frac 1{2p} \int_{\R^d} |u_N|^p dx \Big) \ind_{ \{ | \int_{\R} \wick{| u_N (x)|^2} dx| \le \frac K2\}}d \mu_N(u_N)\\
&= \frac{e^{-C}}2 \int \exp\Big(\frac 1{2p} \int_{\R^d} |u_N|^p dx \Big) \ind_{ \{ | \int_{\R} \wick{| u_N (x)|^2} dx| \le \frac K2\}}d \mu_N(u_N)\\
&\to \infty \text{ as } N \to \infty.
\end{align*}
This concludes the proof of \eqref{non_int2} and \eqref{non_int_d2}, and hence of Theorem \ref{THM:main} and Theorem \ref{THM:main2}.
\medskip

\begin{ackno}\rm
The authors~would like to thank Tadahiro Oh for 
helpful discussions during the preparation 
of the paper.
Y.W.~was supported by 
 the EPSRC New Investigator Award 
 (grant no.~EP/V003178/1).
\end{ackno}

\end{document}